\newtheorem{lemma}{Lemma}[section]
\newtheorem{proposition}[lemma]{Proposition}
\newtheorem{theorem}[lemma]{Theorem}
\newtheorem{corollary}[lemma]{Corollary}
\newtheorem{question}[lemma]{Question}
\newtheorem{conjecture}[lemma]{Conjecture}
\newcommand{\bcon}{\begin{conjecture}}
\newcommand{\econ}{\end{conjecture}}
\newcommand{\bcor}{\begin{corollary}}
\newcommand{\ecor}{\end{corollary}}
\newcommand{\bdf}{\begin{definition}}
\newcommand{\edf}{\end{definition}}
\newcommand{\benu}{\begin{enumerate}}
\newcommand{\eenu}{\end{enumerate}}
\newcommand{\beq}{\begin{equation}}
\newcommand{\eeq}{\end{equation}}
\newcommand{\bexa}{\begin{example}}
\newcommand{\eexa}{\end{example}}
\newcommand{\bexe}{\begin{exercise}}
\newcommand{\eexe}{\end{exercise}}
\newcommand{\bfac}{\begin{fact}}
\newcommand{\efac}{\end{fact}}
\newcommand{\bite}{\begin{itemize}}
\newcommand{\eite}{\end{itemize}}
\newcommand{\blem}{\begin{lemma}}
\newcommand{\elem}{\end{lemma}}
\newcommand{\bmat}{\begin{matrix}}
\newcommand{\emat}{\end{matrix}}
\newcommand{\bprb}{\begin{problem}}
\newcommand{\eprb}{\end{problem}}
\newcommand{\bpro}{\begin{proposition}}
\newcommand{\epro}{\end{proposition}}
\newcommand{\bque}{\begin{question}}
\newcommand{\eque}{\end{question}}
\newcommand{\brem}{\begin{remark}}
\newcommand{\erem}{\end{remark}}
\newcommand{\bthm}{\begin{theorem}}
\newcommand{\ethm}{\end{theorem}}
\newcommand{\bpr}{\begin{proof}}
\newcommand{\epr}{\end{proof}}
\theoremstyle{definition}
\newtheorem{definition}[lemma]{Definition}
\newtheorem{remark}[lemma]{Remark}
\newtheorem{example}[lemma]{Example}
\newtheorem*{namedtheorem}{\theoremname}
\newcommand{\theoremname}{testing}
\newenvironment{named}[1]{\renewcommand{\theoremname}{#1}\begin{namedtheorem}}{\end{namedtheorem}}
\newcommand{\p}{\partial}
\newcommand{\ve}{\varepsilon}
\def\P{\mathbb P}
\newcommand{\Z}{\mathbb{Z}}
\newcommand{\R}{\mathbb{R}}
\renewcommand{\P}{\mathbb{P}}
\newcommand{\Q}{\mathbb{Q}}
\newcommand{\C}{\mathbb{C}}
\newcommand{\K}{\mathbb{K}}
\newcommand{\slC}{\mathrm{SL}_2(\mathbb{C})}
\DeclareMathOperator{\tr}{Tr}
\renewcommand{\S}{\mathcal S}
\title[Skein modules of  Seifert  3-manifolds]{Skein modules and character varieties of Seifert manifolds}
\author{Renaud Detcherry}
\date{} % delete this line to display the current date
\address{Institut de Mathématiques de Bourgogne, UMR 5584 CNRS, Université Bourgogne Franche-Comté, F-2100 Dijon, France}
\email{renaud.detcherry@u-bourgogne.fr}
\author{Efstratia Kalfagianni}
\address{Department of Mathematics, Michigan State University, East
Lansing, MI, 48824, USA}
\email{kalfagia@msu.edu}
\author{Adam S. Sikora}
\address{Department of Mathematics, University at Buffalo,
Buffalo, NY, 14260, USA}
\email{asikora@buffalo.edu}
\def\pmo{{\pm 1}}
\def\cX{\mathcal X}
\begin{document}

\begin{abstract}
We show that the Kauffman bracket skein module of a closed Seifert fibered 3-manifold $M$ is finitely generated over
$\Z[A^{\pm 1}]$ if and only if $M$ is irreducible and non-Haken. 
We analyze in detail the character varieties $\cX(M)$ of such manifolds and show that under mild conditions  they are reduced. We compute the 
 Kauffman bracket skein modules for these $3$-manifolds (over $\Q(A)$) and show that their dimensions coincide with $|\cX(M)|.$
\end{abstract}

\maketitle

%%%%%%%%%%%%%%%%%%%%%%%%%%%%%%%%%%%%%%%%%%%%%
\section{Introduction}
\label{sec:intro}
%%%%%%%%%%%%%%%%%%%%%%%%%%%%%%%%%%%%%%%%%%%%%

Throughout the paper, $M$ will denote an oriented, closed $3$-manifold.
 Let $\S(M,R)$ be the Kauffman bracket skein module of $M$  with coefficients in a commutative ring $R$, with a distinguished invertible element $A\in R$. The module  $\S(M,R)$ is the quotient of the free $R$-module on  framed unoriented links in $M$, including the empty link $\emptyset$, by the  relations:
\begin{figure}[h]
{\centering
\def \svgwidth{1.1\columnwidth}
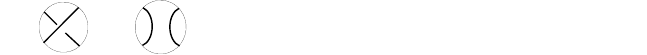}
\end{figure}

 In this paper we will consider the coefficient rings  $R= \Z[A^{\pm 1}], \Q[A^{\pm 1}]$,  and $\Q(A)$.

Recall that a  $\Q[A^{\pm 1}]$-module $S$ is \underline{tame} if it is a direct sum of cyclic $\Q[A^{\pm 1}]$-modules and, for at least one odd $N$,
$S$ does not contain $\Q[A^{\pm 1}]/(\phi_{2N})$ as a submodule,  where $\phi_{2N}$ is the $2N$-th cyclotomic polynomial. In particular, every finitely generated $\Q[A^{\pm 1}]$-module is tame.
We propose the following:

\begin{conjecture}\label{ourconjecture}
For any closed 3-manifold $M$ the following are equivalent:
\begin{enumerate} [(a)]
\item $M$ contains no 2-sided essential surface  
(i.e. $M$ is irreducible, non-Haken). 
\item The skein module $\S(M,\Z[A^{\pm 1}])$ is finitely generated.
\item The skein module $\S(M,\Q[A^{\pm 1}])$ is tame.
\end{enumerate}
\end{conjecture}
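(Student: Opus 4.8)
Since the statement is an equivalence among three conditions, the plan is to set up the cycle $(a)\Rightarrow(b)\Rightarrow(c)\Rightarrow(a)$, isolating which arrows are formal and which carry the real content. The implication $(b)\Rightarrow(c)$ is the soft one: skein modules are defined by generators and relations compatible with base change, so $\S(M,\Q[A^{\pm 1}])\cong \S(M,\Z[A^{\pm 1}])\otimes_{\Z}\Q$, and if the latter is finitely generated over $\Z[A^{\pm 1}]$ then the former is finitely generated over the PID $\Q[A^{\pm 1}]$. A finitely generated module over a PID is a finite direct sum of cyclic modules, so only finitely many cyclotomic polynomials can divide its invariant factors; hence for all but finitely many odd $N$ it contains no copy of $\Q[A^{\pm 1}]/(\phi_{2N})$, and by the remark preceding the conjecture it is tame. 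Thus the whole difficulty is concentrated in the remaining two arrows, which must convert the topological hypothesis on essential surfaces into algebraic finiteness and back.

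For $(c)\Rightarrow(a)$ I would argue the contrapositive: assuming $M$ contains a two-sided essential surface $S$, I would produce enough cyclotomic torsion in $\S(M,\Q[A^{\pm 1}])$ to violate tameness. The mechanism is to cut $M$ along $S$ and exploit the resulting gluing: curves meeting $S$ transversely in $2n$ points, run through Chebyshev/Jones--Wenzl combinations, give a filtration of the skein module whose associated graded detects the surface. Incompressibility guarantees these classes are nonzero, while the special behaviour of the Kauffman bracket at a primitive $2N$-th root of unity (where the relevant quantum integers vanish) pins the torsion to the ideal $(\phi_{2N})$ for every odd $N$ simultaneously. Producing $\Q[A^{\pm 1}]/(\phi_{2N})$ as a submodule for all odd $N$ --- rather than for one sporadic $N$ --- is exactly what the failure of tameness requires, so this is the delicate point in this direction.

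The hard arrow is $(a)\Rightarrow(b)$. Here the natural starting point is the finiteness of $\S(M,\Q(A))$ over $\Q(A)$ (Gunningham--Jordan--Safronov) together with the comparison between the skein module and the $\slC$-character variety: at $A=-1$ the skein algebra recovers the coordinate ring of $\cX(M)$ up to nilpotents, and by Culler--Shalen theory a closed irreducible non-Haken $M$ has no essential surface and hence $0$-dimensional, i.e.\ finite, character variety, so one expects $\dim_{\Q(A)}\S(M,\Q(A))=|\cX(M)|<\infty$. The genuine obstacle is descent from this generic, characteristic-zero statement to finite generation over $\Z[A^{\pm 1}]$: one must rule out wild torsion concentrated at roots of unity and bound the module integrally, and there is no formal passage from ``finite-dimensional at a transcendental $A$'' to ``finitely generated over $\Z[A^{\pm 1}]$''. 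This is precisely why the statement is posed as a conjecture in general.

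Finally, to actually close the cycle for Seifert fibered $M$ --- the case the paper settles --- I would replace the general descent by explicit computation: use the Seifert structure to present $\pi_1(M)$, compute $\cX(M)$ directly, verify it is finite and reduced under the stated mild hypotheses, and then carry out a hands-on calculation of $\S(M,\Z[A^{\pm 1}])$ along the fibration, reading off generators from curves that are vertical or horizontal with respect to the Seifert structure. Matching $\dim_{\Q(A)}\S(M,\Q(A))$ with $|\cX(M)|$ then certifies that the non-Haken case is finitely generated, while the torsion analysis above shows the Haken case fails tameness, giving the full equivalence in this family.
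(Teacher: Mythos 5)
Your proposal gets only one of the three arrows right, and it is the one the paper also treats as formal: $(b)\Rightarrow(c)$ via base change and the structure theorem over the PID $\Q[A^{\pm 1}]$ agrees with the paper's remark that every finitely generated $\Q[A^{\pm 1}]$-module is tame. The first genuine gap is your $(c)\Rightarrow(a)$. You propose to cut along an essential surface and produce $\Q[A^{\pm 1}]/(\phi_{2N})$-torsion for every odd $N$ via a Chebyshev/Jones--Wenzl filtration, with ``incompressibility guarantees these classes are nonzero'' doing the work. That step is precisely the missing idea: no such surface-to-torsion mechanism is known, and the known non-tame examples (e.g. $\R\P^3\#\R\P^3$ in \cite{Mro11}, and $\R\P^3\# L(2p,1)$ in \cite{BD}) come from explicit computations, not from a general construction. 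Keep in mind that the statement you are proving is Conjecture \ref{ourconjecture}, which is open; the paper proves it only for Seifert fibered manifolds (Theorem \ref{main-i}), and there it proves $(c)\Rightarrow(a)$ by an entirely different route: tameness plus \cite[Theorem 1.1]{DKS} gives $|X(M)|\le\dim_{\Q(A)}\S(M)$, which is finite by \cite{GJS19}, so $X(M)$ is finite; then Theorem \ref{thm:HakenSFScharac} --- proved by explicitly building infinitely many non-conjugate $SU(2)$-representations on Haken Seifert manifolds (Proposition \ref{Zetner}) --- forces $M$ to be non-Haken. This character-variety route is special to Seifert manifolds: since there exist Haken 3-manifolds with finite $X(M)$ (e.g. \cite{Motegi}), it cannot give the general conjecture, which is exactly why your direct-torsion idea would be the needed new ingredient --- but as written it is a heuristic, not an argument.

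The second gap is the endgame of $(a)\Rightarrow(b)$ in the Seifert case, which reverses the paper's logic. You claim that ``matching $\dim_{\Q(A)}\S(M,\Q(A))$ with $|\cX(M)|$ certifies that the non-Haken case is finitely generated.'' It cannot: a dimension count over the field $\Q(A)$ gives no control over the $\Z[A^{\pm 1}]$-module $\S(M,\Z[A^{\pm 1}])$ --- this is the very descent obstacle you correctly identify two sentences earlier, and no amount of character-variety computation closes it. In the paper, finite generation (Theorem \ref{SFSfingen}) is proved first and directly, with no character-variety input at all: $M$ is realized as a Dehn filling of $S_{0,3}\times S^1$, the skein module is spanned by classes $\overline{L}_v$ with $v\in\Z^6$, and Propositions \ref{p.reduction1} and \ref{p.reduction2} use the Frohman--Gelca product-to-sum formula together with the filling relations to rewrite any $\overline{L}_v$ of large complexity as a combination of strictly smaller ones; the hypothesis $e(M)\neq 0$ enters as the explicit inequality $|\ve_1|+1>\max(|\ve_2-1|+|\ve_3|,\,|\ve_2|+|\ve_3+1|)$ that makes the reduction terminate. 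The computations of $|X(M)|$ and the reducedness of $\cX(M)$ in Section \ref{sec:characters} are downstream of finite generation, feeding Corollary \ref{reduced} and Theorem \ref{thm:reducedness}; they are not its source. So both substantive arrows of your outline are unproven, and the dependency between the skein-theoretic and character-variety halves of the paper is inverted in your plan.
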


All the essential surfaces we discuss in this paper will be 2-sided.

Conjecture \ref{ourconjecture} is inspired and closely related to a conjecture of
Przytycki (Conjecture (E) of \cite[Problem 1.92]{Kirby}) asserting that  $S(M, \Q[A^{\pm 1}])$ is free for manifolds $M$ containing no essential surfaces.
Since $\S(M):=S(M, \Q(A))$  is finitely generated over $\Q(A)$ by
  \cite{GJS19}, Przytycki's conjecture implies that  $S(M, \Q[A^{\pm 1}]))$  is finitely generated
for closed 3-manifold without essential surfaces.

Let
$$\cX(M)=\mathrm{Hom}(\pi_1(M),\slC)/\hspace*{-.05in}/\slC,$$
denote the $\slC$-character variety of $M$, considered as a scheme, as defined for example in \cite{LM85,BH95}.
The coordinate ring  $\C[\cX(M)]$ is the algebra of global sections of the structure sheaf of $\cX(M)$.
The above variety may be non-reduced, that is $\C[\cX(M)]$ may have a nontrivial nil-radical \cite{KM17}.
We denote by $X(M)$ the algebraic set underlying $\cX(M)$ and by $|X(M)|$ the cardinality of
$X(M)$.
By the definitions, 
$\C[X(M)]=\C[\cX(M)]/\sqrt{0}$,
where $\sqrt{0}$ is the nil-radical of $\C[\cX(M)]$.

Our results in  \cite{DKS} imply if $X(M)$ is infinite (for example, then $\S(M,\Q[A^{\pm 1}])$ is non-tame.  On the other hand,
by the Culler-Shalen theory \cite{CullerShalen}, $M$ contains an essential surface. Hence, Conjecture \ref{ourconjecture} holds for 3-manifolds with infinite $X(M)$.
 
One of our main results is:

\begin{theorem}\label{main-i}  
Conjecture \ref{ourconjecture} holds for all Seifert fibered 3-manifolds.
\end{theorem}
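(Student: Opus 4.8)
The plan is to prove Conjecture~\ref{ourconjecture} for a closed Seifert fibered $M$ by combining the reduction already recorded in the excerpt with the classification of Seifert manifolds by base orbifold, Seifert invariants and Euler number $e$. The remark preceding the theorem shows that whenever $X(M)$ is infinite, conditions (a), (b), (c) all fail, so the equivalence holds vacuously for such $M$. It therefore suffices to establish two claims: \textbf{(I)} every Seifert manifold that fails (a) (i.e.\ is reducible or Haken) has infinite $X(M)$, and \textbf{(II)} every Seifert manifold satisfying (a) (irreducible, non-Haken) satisfies (b). Since the skein module is defined over $\Z[A^{\pm 1}]$ and base-changes, (b) gives finite generation of $\S(M,\Q[A^{\pm 1}])$, hence tameness, so (b)$\Rightarrow$(c) is automatic; together (I) and (II) partition all Seifert manifolds into a class where all three conditions fail and a class where all three hold.

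For (I), I would first recall that essential surfaces in a Seifert manifold are isotopic to vertical or horizontal ones: a vertical essential surface exists iff the base orbifold is ``large'' (not $S^2$ with at most three cone points, nor an $\mathbb{RP}^2$ analogue), and a horizontal one exists iff $e=0$. Using that $e=0$ over a small base forces the regular fiber $h$ to have infinite order in $H_1$, hence $b_1(M)>0$, one gets that an irreducible Haken Seifert manifold has either a large base orbifold or $b_1(M)>0$; the two reducible cases $S^1\times S^2$ and $\mathbb{RP}^3\#\mathbb{RP}^3$ can be checked by hand. I would then show each of these forces a positive-dimensional $X(M)$: when $b_1(M)>0$ the diagonal representations already form a positive-dimensional family, and when the base is large the representations of $\pi_1^{\mathrm{orb}}(B)$ — pulled back along the central quotient $\pi_1(M)\twoheadrightarrow\pi_1^{\mathrm{orb}}(B)$, where $h$ is central and hence sent to $\pm I$ on irreducibles — deform in positive dimension. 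With $X(M)$ infinite, the excerpt's remark (via \cite{DKS} and \cite{CullerShalen}) finishes this direction.

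For (II), the non-Haken manifolds are the ``small'' ones: base $S^2(a,b,c)$ (at most three cone points), $e\neq 0$, and hence $b_1(M)=0$, together with the spherical space forms and the $\mathbb{RP}^2$-base analogues. The matching character-variety count is the easy half: irreducible representations send $h$ to $\pm I$ and so factor through the triangle group $\pi_1^{\mathrm{orb}}(S^2(a,b,c))$, whose $\slC$-characters are rigid once the (finitely many, root-of-unity) conjugacy classes of the three generators are fixed, while abelian characters are finite because $H_1(M)$ is finite; thus $X(M)$ is finite, consistent with the computation of $\dim_{\Q(A)}\S(M)=|X(M)|$. The substantive part is finite generation of $\S(M,\Z[A^{\pm 1}])$ itself: I would present $M$ by surgery on a Seifert link (equivalently, via its genus~$\leq 2$ Heegaard splitting, lens spaces being the genus~$\leq 1$ cases), exhibit an explicit finite set of multicurves built from the regular and exceptional fibers, and then reduce an arbitrary framed link to this set using the Kauffman relations together with the polynomial relations satisfied by the central fiber $h$.

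The main obstacle I expect is exactly this last reduction over $\Z[A^{\pm 1}]$ rather than over the field $\Q(A)$: finiteness of $\dim_{\Q(A)}\S(M)$ does not by itself bound the number of $\Z[A^{\pm 1}]$-generators, since torsion summands — precisely the $\Q[A^{\pm 1}]/(\phi_{2N})$ factors that tameness is designed to exclude — could a priori proliferate. Controlling them requires a genuinely integral skein-reduction algorithm adapted to the fibration, and careful handling of the degenerate cases where the main argument thins out: the spherical manifolds with finite $\pi_1$ (where I would make the representation theory and generating multicurves fully explicit), the non-orientable base orbifolds, and small multiplicities $(a,b,c)$ where triangle-group rigidity degenerates.
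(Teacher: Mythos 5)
Your proposal breaks at the reducible case. You partition Seifert manifolds into those failing (a), which you claim all have infinite $X(M)$ (your Claim (I)), and those satisfying (a), for which you prove (b). But Claim (I) is false for $M=\R\P^3\#\R\P^3$: this is one of the two reducible Seifert manifolds, so it fails (a), yet $\pi_1(M)\cong\Z/2*\Z/2$ admits only central $\slC$-representations (a matrix in $SL_2(\C)$ with square $I$ and determinant $1$ must be $\pm I$), so $X(M)$ consists of exactly four points. Hence the vacuous route through Culler--Shalen and \cite{DKS} is unavailable for this manifold, and your proposal contains no argument that (b) and (c) fail for it --- which they must, if the equivalence is to hold. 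What is needed here is genuinely skein-theoretic input, and this is how the paper disposes of the case: Mroczkowski's computation \cite{Mro11} that $\S(\R\P^3\#\R\P^3,\Q[A^{\pm 1}])$ is non-tame, and \cite{BD} for non-finite generation over $\Z[A^{\pm 1}]$. ``Checking the reducible cases by hand,'' as you propose, exposes the problem rather than resolving it. (A smaller flaw in the same direction: for base $\R\P^2$ the fiber $h$ is not central, since $aha^{-1}=h^{-1}$, so your pull-back-from-$\pi_1^{\mathrm{orb}}(B)$ argument does not apply there; the paper treats that case separately, by an explicit quaternionic construction, in Proposition \ref{Zetner}.)

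Second, in Claim (II) you never actually prove the statement that is the technical heart of the theorem: finite generation of $\S(M,\Z[A^{\pm 1}])$ when $M$ fibers over $S^2$ with at most three exceptional fibers and $e(M)\neq 0$. You describe a plan (a finite set of multicurves built from the fibers, reduction of arbitrary links via Kauffman relations) and then explicitly flag the integral reduction as ``the main obstacle''; but that obstacle is exactly where the paper's work lies. The paper realizes $M$ as a Dehn filling of $S_{0,3}\times S^1$, uses the Frohman--Gelca product-to-sum formula in the boundary tori, and runs a two-stage complexity-decreasing induction (Propositions \ref{p.reduction1} and \ref{p.reduction2}, yielding Theorem \ref{SFSfingen}), in which the hypothesis $e(M)\neq 0$ enters through the explicit inequality $|\ve_1|+1>\max(|\ve_2-1|+|\ve_3|,\,|\ve_2|+|\ve_3+1|)$ that makes the reduction terminate. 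Where your outline overlaps with the paper --- infinite $X(M)$ for Haken or large-base Seifert manifolds via representations of the base orbifold group, and the automatic implication (b)$\Rightarrow$(c) --- it follows the same lines; but as written it omits one necessary case entirely and leaves the central finiteness argument as an acknowledged gap, so it is an outline rather than a proof.
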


An infinite family of non-irreducible  3-manifolds with finite  $X(M)$, is formed by the  connected sum  $M=\R\P^3\# L(2p,1)$, for $p>0$. 
For $M=\R\P^3\# \R\P^3$ the skein module  $\S(M,\Q[A^{\pm 1}])$ was computed in \cite{Mro11} and was shown  to be non-tame.
More recently, Belletti and Detcherry \cite{BD} proved that $\S(M,\Z[A^{\pm 1}])$ is not finitely generated for any $M:=\R\P^3\# L(2p,1)$.
Hence Conjecture \ref{ourconjecture} also holds for this  infinite family of  non-Seifert manifolds as well.

In the course of proving Theorem \ref{main-i}, we also established the following statement, which we have not been able to find in the literature in this generality:

\begin{theorem}\label{t.SeifertX} \label{thm:HakenSFScharac} 
If $M$ is Seifert manifold, then $\dim X(M)>0$ if and only if $M$ is Haken or $M=S^2\times S^1$. 
\end{theorem}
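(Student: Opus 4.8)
The strategy is to read $\dim X(M)$ directly off the Seifert data and then match the answer against the classification of essential surfaces in Seifert manifolds. Fix a Seifert fibration of $M$ over a base $2$-orbifold $B$ with generic fiber class $h\in\pi_1(M)$, and use the standard presentation in which $h$ is central with $\pi_1(M)/\langle h\rangle\cong\pi_1^{\mathrm{orb}}(B)$. I would decompose $X(M)$ into its reducible and irreducible loci and estimate the dimension of each.

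On the reducible locus, every character is that of a diagonal representation and hence factors through $H_1(M)$; such characters are parametrized by $\mathrm{Hom}(H_1(M),\C^\ast)$ modulo inversion, a set of dimension $b_1(M)$. Thus this locus is positive-dimensional iff $b_1(M)>0$. Abelianizing the presentation (the surface relations kill commutators, leaving the cone relations $\alpha_j x_j+\beta_j h=0$ and $\sum_j x_j+bh=0$) shows that $b_1(M)>0$ exactly when $B$ has positive genus, or $B$ has genus $0$ and the Euler number $e(M)$ vanishes.

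On the irreducible locus, Schur's lemma forces $\rho(h)=\pm I$. When $\rho(h)=I$ the representation factors through $\pi_1^{\mathrm{orb}}(B)$, and when $\rho(h)=-I$ through a fixed $\Z/2$ central extension of it; either way the resulting variety has the same dimension as the $\slC$-character variety of $\pi_1^{\mathrm{orb}}(B)$. The standard dimension counts ($6g-6$ for a closed surface of genus $g\ge 2$, dimension $2$ for the torus, and $2(n-3)$ for the relative variety of $S^2$ with $n$ cone points) show that this variety is positive-dimensional precisely for \emph{large} bases --- positive genus, or $S^2$ with at least four cone points --- and is finite or rigid (hence $0$-dimensional) for the \emph{small} bases $S^2(\alpha_1,\alpha_2,\alpha_3)$ and those with fewer cone points; the non-orientable bases are handled identically, with $\R\P^2(\alpha_1)$ small. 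Combining the two loci gives $\dim X(M)>0$ iff $b_1(M)>0$ or $B$ is large, which simplifies to: $e(M)=0$ or $B$ is large.

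Finally I would identify this condition with the statement of the theorem. By the classification of incompressible surfaces in Seifert manifolds, $M$ admits a vertical essential surface iff $B$ is large and a horizontal essential surface iff $e(M)=0$; hence for \emph{irreducible} $M$ the condition ``$e(M)=0$ or $B$ large'' is exactly ``$M$ Haken.'' The only reducible closed orientable Seifert manifolds are $S^2\times S^1$ and $\R\P^3\#\R\P^3$: the former has $\pi_1=\Z$, so $X(M)\cong\C$ is $1$-dimensional while $M$ is not Haken --- this is the exceptional case --- whereas $\R\P^3\#\R\P^3$ has $\pi_1=\Z/2\ast\Z/2$, forcing every generator to $\pm I$ and thus $\dim X=0$. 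Matching cases yields the theorem. I expect the irreducible-locus step to be the main obstacle: one must check that large bases really carry a positive-dimensional family of \emph{irreducible} representations, that the hyperbolic triangle orbifolds $S^2(p,q,r)$ are nonetheless rigid, and one must track the sign $\rho(h)=\pm I$ and the corresponding central extension carefully enough to transfer the dimension from the base orbifold to $X^{\mathrm{irr}}(M)$.
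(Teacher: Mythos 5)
Your route is genuinely different from the paper's and, in outline, it is sound. The paper proves the forward direction by quoting Culler--Shalen (an infinite $X(M)$ forces an essential surface) and the reverse direction by explicitly constructing infinitely many $SU(2)$-characters: the angle inequality of Lemma \ref{lemma:SZ} handles $S^2$ bases with at least four exceptional fibers (Lemma \ref{lemma:S2base}), a quaternionic construction handles $\R\P^2$ bases with at least two exceptional fibers (Proposition \ref{Zetner}), and the remaining Haken cases are dispatched by showing $H_1(M)$ surjects onto $\Z$. You instead compute $\dim X(M)$ directly from the Seifert data --- abelian locus via $b_1(M)$, irreducible locus via character varieties of $\pi_1^{\mathrm{orb}}(B)$ and its central $\Z/2$-extension --- and then match the answer against the vertical/horizontal classification of incompressible surfaces. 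This avoids Culler--Shalen altogether and makes both directions fall out of one computation; your treatment of the two reducible manifolds is also correct (in $SL_2(\C)$ the relation $a^2=1$ forces $\rho(a)=\pm I$, so $X(\R\P^3\#\R\P^3)$ is four points). Note that since positive-genus bases already have $b_1(M)>0$, your irreducible-locus analysis is only truly needed for $S^2$ with at least four cone points and for $\R\P^2$ with at least two cone points --- which is exactly the scope of the paper's Proposition \ref{Zetner}.

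That step, which you yourself flag as ``the main obstacle,'' is a genuine gap, and it is precisely where the paper does its real work. Standard dimension counts for relative character varieties of punctured spheres give the expected dimension $2(n-3)$, but they do not give non-emptiness: one must produce \emph{irreducible} representations with each $\rho(c_i)$ elliptic of the prescribed root-of-unity type and product $I$ (or the appropriate lift when $\rho(h)=-I$) --- a Deligne--Simpson-type existence problem, which is what Lemma \ref{lemma:S2base} solves via the $SU(2)$ angle inequality. Worse, for $\R\P^2$ with two or more cone points, ``handled identically'' has no standard statement behind it: the base is non-orientable, the dimension formulas you invoke are for orientable surfaces, and the paper needs the separate quaternionic construction of Proposition \ref{Zetner} for exactly this family. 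There is also a small but real slip in the abelian locus: your criterion ``$b_1(M)>0$ iff positive genus, or genus $0$ and $e(M)=0$'' uses the orientable-base presentation; for non-orientable bases the relation $aha^{-1}=h^{-1}$ makes $h$ rationally trivial, so $b_1(M)$ is independent of $e(M)$ (and vanishes for base $\R\P^2$). Consequently your final simplification ``($b_1>0$ or $B$ large) iff ($e(M)=0$ or $B$ large)'' is not a formal equivalence of Seifert data: it requires the classification fact that a fibration over $\R\P^2$ with at most one exceptional fiber and $e(M)=0$ is the reducible manifold $\R\P^3\#\R\P^3$. None of this breaks your strategy, but until the existence statements are proved or cited, the proposal defers, rather than replaces, the paper's key Proposition \ref{Zetner}.
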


More generally, any $3$-manifold $M$ with infinite $X(M)$ contains an essential surface, by
\cite{CullerShalen}, but the opposite implication is known to be false in general,
as there are Haken or non-irreducible 3-manifolds with finite  $X(M)$.
Besides the connected sums $\R\P^3\# L(n,1)$ considered above, examples of Haken manifolds $M$ with finite
$X(M)$ were constructed for example in \cite{Motegi}.

Combining Theorem \ref{main-i} with an earlier result of the authors \cite[Theorem 1.1]{DKS}, leads to the following corollary proved
 in Section \ref{sec:non-Haken}:
\begin{corollary}\label{reduced} 
For any non-Haken Seifert fibered manifold $M$,
  we have
$$|X(M)| \leq \dim_{\Q(A)}  \S(M) \leq \dim_{\C}\, \C[\cX(M)].$$
In particular, if $\cX(M)$ is reduced then $\dim_{\Q(A)}\S(M)=
|X(M)|$. 
\end{corollary}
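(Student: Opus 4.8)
The plan is to reduce the corollary to two separate inequalities, using Theorem \ref{main-i} to supply the finiteness hypotheses, Theorem \ref{thm:HakenSFScharac} to control the dimension of the character variety, and a final squeeze for the ``in particular'' clause.

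First I would record the finiteness inputs. Since $M$ is non-Haken and Seifert fibered, it satisfies condition (a) of Conjecture \ref{ourconjecture}. By Theorem \ref{main-i} the conjecture holds for $M$, so conditions (b) and (c) hold as well: $\S(M,\Z[A^{\pm1}])$ is finitely generated and $\S(M,\Q[A^{\pm1}])$ is tame. By \cite{GJS19} the module $\S(M)=S(M,\Q(A))$ is finite-dimensional over $\Q(A)$. Moreover, being non-Haken, $M$ is neither Haken nor $S^2\times S^1$, so Theorem \ref{thm:HakenSFScharac} gives $\dim X(M)=0$; hence $X(M)$ is a finite set of closed points and $\C[\cX(M)]$ is a finite-dimensional $\C$-algebra. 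In particular all three quantities appearing in the asserted inequalities are finite.

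Next, the right-hand inequality $\dim_{\Q(A)}\S(M)\le \dim_\C\C[\cX(M)]$. I would work over the principal ideal domain $\C[A^{\pm1}]$ and use the identification (Bullock, Przytycki--Sikora) of the skein algebra specialized at $A=-1$ with the coordinate ring of the character scheme, $\S(M,\C[A^{\pm1}])\otimes_{A=-1}\C\cong\C[\cX(M)]$. Writing the finitely generated $\C[A^{\pm1}]$-module $\S(M,\C[A^{\pm1}])$ as a free part of rank $r$ plus a torsion part, one has $r=\dim_{\C(A)}\S(M,\C(A))=\dim_{\Q(A)}\S(M)$, while the dimension of the fiber at any closed point is at least the generic rank; therefore
$$\dim_{\Q(A)}\S(M)=r\le \dim_\C\bigl(\S(M,\C[A^{\pm1}])\otimes_{A=-1}\C\bigr)=\dim_\C\C[\cX(M)].$$
Finite generation---equivalently tameness, supplied by Theorem \ref{main-i}---is exactly what makes this comparison of generic rank with special fiber legitimate. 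The left-hand inequality $|X(M)|\le\dim_{\Q(A)}\S(M)$ is the substantive input from \cite[Theorem 1.1]{DKS}: the finitely many characters of $X(M)$ are separated by trace functions, which at $A=-1$ provide $|X(M)|$ linearly independent functionals on the skein module, and \cite{DKS} shows these survive the passage to generic $A$, bounding the generic dimension below by $|X(M)|$.

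Finally I would deduce the last clause by squeezing. If $\cX(M)$ is reduced then, being $0$-dimensional over the algebraically closed field $\C$, its coordinate ring is a finite product of copies of $\C$; hence $\C[\cX(M)]\cong\C[X(M)]$ and $\dim_\C\C[\cX(M)]=|X(M)|$. Combined with the two inequalities this forces $|X(M)|\le\dim_{\Q(A)}\S(M)\le|X(M)|$, so $\dim_{\Q(A)}\S(M)=|X(M)|$. The main obstacle is the lower bound $|X(M)|\le\dim_{\Q(A)}\S(M)$: the upper bound and the squeeze are formal, but showing that every character is detected by the \emph{generic} skein module, rather than only by the possibly larger special fiber at $A=-1$, is the delicate point, and it is precisely here that one must invoke the earlier work \cite{DKS} in place of an elementary specialization argument.
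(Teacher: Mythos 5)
Your argument works, and essentially coincides with the paper's proof, only for \emph{irreducible} non-Haken Seifert manifolds: there, Theorem \ref{main-i} supplies tameness/finite generation and \cite[Theorem 1.1]{DKS} supplies the inequality chain (you re-derive the right-hand inequality by a specialization argument over $\C[A^{\pm 1}]$ rather than citing \cite{DKS} for it, which is fine). The genuine gap is in your very first step: ``$M$ non-Haken implies condition (a) of Conjecture \ref{ourconjecture}'' is false. Under the paper's definition a Haken manifold is by definition irreducible, so the reducible Seifert manifolds $S^2\times S^1$ and $\R\P^3\#\R\P^3$ are non-Haken and are covered by the corollary; but they contain essential $2$-spheres, so condition (a) fails for them. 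Worse, since Theorem \ref{main-i} is an equivalence, it tells you that for these two manifolds $\S(M,\Z[A^{\pm 1}])$ is \emph{not} finitely generated and $\S(M,\Q[A^{\pm 1}])$ is \emph{not} tame (the introduction recalls that $\S(\R\P^3\#\R\P^3,\Q[A^{\pm 1}])$ is non-tame), so every finiteness input in your first paragraph is unavailable there, and the tameness hypothesis needed to invoke \cite[Theorem 1.1]{DKS} is absent. Your parallel claim that ``being non-Haken, $M$ is neither Haken nor $S^2\times S^1$'' is also a non sequitur: $S^2\times S^1$ is itself non-Haken, and for it Theorem \ref{thm:HakenSFScharac} gives $\dim X(M)>0$, not $\dim X(M)=0$.

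The paper closes exactly this gap by treating the two reducible non-Haken Seifert manifolds separately, via the known computations $\dim_{\Q(A)}\S(S^2\times S^1)=1$ and $\dim_{\Q(A)}\S(\R\P^3\#\R\P^3)=\dim_{\Q(A)}\S(\R\P^3)\cdot\dim_{\Q(A)}\S(\R\P^3)=4$, and checking the asserted statement directly in those cases. To repair your proof you would need to add this case analysis: run your tameness/DKS argument only when $M$ is irreducible, and verify the conclusion by hand for $S^2\times S^1$ and $\R\P^3\#\R\P^3$.
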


 Corollary \ref{reduced} motivates the question of when $\cX(M)$ is  reduced. Non-Haken, irreducible Seifert manifolds
 fiber over $S^2$ with at most three exceptional fibers and non-zero Euler number $e(M)$.
   In Section \ref{sec:characters} we study   $\cX(M)$ of such manifolds  in detail, and we compute the number of their irreducible representations $|X^{irr}(M)|$ (Proposition \ref{prop:irredCharac}). We also show that, under mild conditions on the multiplicities of the exceptional fibers, $\cX(M)$  is reduced.

 We call the  integers $p_1,p_2,p_3$ are \underline{weakly coprime} if one of them is coprime with the other two. 
 
 \begin{theorem}\label{thm:reducedness} Let  $M$ be a Seifert fibered manifold that fibers over $S^2$ with at most three exceptional fibers of multiplicities
 $p_1, p_2, p_3>0$ and $e(M)\neq  0$. If either $H_1(M,\Z)$ is $2$-torsion or if $p_1,p_2,p_3$ are weakly coprime, then $\cX(M)$ is reduced and
$$|X^{irr}(M)|=p_1^+p_2^+p_3^+ + p_1^-p_2^-p_3^-,$$
where $p_i^+=\lceil \frac{p_i}{2}\rceil-1$ and $p_i^-=\lfloor \frac{p_i}{2} \rfloor$.	
\end{theorem}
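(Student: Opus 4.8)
The plan is to combine an explicit enumeration of the characters of $M$ with a cohomological computation of the Zariski tangent spaces of $\cX(M)$. I would start from the standard presentation
$$\pi_1(M)=\langle x_1,x_2,x_3,h \mid [h,x_i]=1,\ x_i^{p_i}=h^{-q_i},\ x_1x_2x_3=h^{b}\rangle,$$
where $\gcd(p_i,q_i)=1$ and $e(M)=-(b+\sum_i q_i/p_i)\neq 0$; since adding $p_i$ to $q_i$ (adjusting $b$) leaves $M$ unchanged, I may normalize all $q_i$ to be odd. Such an $M$ is non-Haken and not $S^2\times S^1$, so by Theorem \ref{thm:HakenSFScharac} the scheme $\cX(M)$ is finite. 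Consequently $\cX(M)$ is reduced if and only if its Zariski tangent space vanishes at every point, and it suffices to (i) enumerate the characters and (ii) prove each is a reduced, isolated point. These two tasks also give the count $|X^{irr}(M)|$.

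For the enumeration, for an irreducible $\rho$ the central $h$ forces $\rho(h)=\epsilon I$ with $\epsilon=\pm1$ by Schur's lemma, and the relations become $\rho(x_i)^{p_i}=\epsilon^{q_i}I$ and $\rho(x_1)\rho(x_2)\rho(x_3)=\epsilon^{b}I$. Thus each $\rho(x_i)$ is elliptic with eigenvalues $e^{\pm i\pi\ell_i/p_i}$, $\ell_i\in\{1,\dots,p_i-1\}$, the parity of $\ell_i$ being pinned by $\epsilon^{q_i}$; counting the even, resp.\ odd, admissible values gives exactly $p_i^{+}=\lceil p_i/2\rceil-1$, resp.\ $p_i^{-}=\lfloor p_i/2\rfloor$, allowed traces. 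Writing $x_3=\epsilon^{b}(x_1x_2)^{-1}$ and using the identification of the character variety of $\langle x_1,x_2\rangle$ with $\C^3$ via $(\tr x_1,\tr x_2,\tr x_1x_2)$, each admissible trace triple determines a unique irreducible character, and distinct triples give distinct characters. With the normalization $q_i$ odd, the case $\epsilon=+1$ forces all $\ell_i$ even and contributes $p_1^+p_2^+p_3^+$, while $\epsilon=-1$ forces all $\ell_i$ odd and contributes $p_1^-p_2^-p_3^-$. The remaining point in this step is to check that every such admissible triple lies off the reducible locus $\tr[x_1,x_2]=2$ (and hence yields a genuine irreducible character disjoint from the abelian ones); I would carry this out by a parity computation of $\tr[x_1,x_2]$ in terms of the $\ell_i$ and $\epsilon^{b}$, where the hypothesis $e(M)\neq0$ is exactly what excludes the eigenvalue coincidences $e^{\pm i\pi(\ell_1/p_1\pm \ell_2/p_2\pm \ell_3/p_3)}=\epsilon^{b}$ that would make a triple reducible.

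For reducedness at an irreducible character I would compute $H^1(\pi_1(M),\mathrm{Ad}\rho)$, which is the tangent space to $\cX(M)$ there. Since $\mathrm{Ad}\rho(h)=\mathrm{Id}$, the central extension
$$1\to\langle h\rangle\to\pi_1(M)\to\pi_1^{\mathrm{orb}}(S^2(p_1,p_2,p_3))\to 1$$
and the Lyndon--Hochschild--Serre spectral sequence, together with $H^0(\pi_1^{\mathrm{orb}},\mathfrak{sl}_2)=0$ (irreducibility), identify $H^1(\pi_1(M),\mathrm{Ad}\rho)\cong H^1(\pi_1^{\mathrm{orb}}(S^2(p_1,p_2,p_3)),\mathrm{Ad}\rho)$. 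The latter vanishes by the infinitesimal rigidity of triangle groups (the Teichm\"uller space of $S^2(p_1,p_2,p_3)$ is a point), so each irreducible character is a reduced isolated point and the count of the previous paragraph is exact.

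The hardest step, and the one where the two hypotheses enter, is reducedness along the reducible locus and at its interface with the irreducible characters. Diagonal representations factor through $H_1(M)$ and are parametrized by $\mathrm{Hom}(H_1(M),\C^{*})$ modulo the Weyl involution $\chi\mapsto\chi^{-1}$; the scheme $\cX(M)$ can a priori be non-reduced at such a point, the obstruction being the twisted cohomology $H^1(\pi_1(M),\C_\chi)\oplus H^1(\pi_1(M),\C_{\chi^{-1}})$ (which governs deformations of $\chi$ into irreducible directions). I would compute these groups directly from the presentation, again using the central extension to reduce to the base orbifold, and then invoke the hypotheses to force their vanishing: if $H_1(M)$ is $2$-torsion then every $\chi$ takes values in $\{\pm1\}$, so $\chi=\chi^{-1}$ and there are only finitely many, manifestly reduced, abelian characters; if instead $p_1,p_2,p_3$ are weakly coprime, the relevant cyclotomic conditions forcing a nonzero twisted class become incompatible, so the twisted $H^1$ again vanishes. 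Establishing this vanishing and, with it, that no reducible character is a non-reduced limit of irreducibles, is the main obstacle; once it is done, finiteness of $\cX(M)$ upgrades the pointwise tangent-space vanishing to reducedness of $\cX(M)$, and the enumeration of the second paragraph yields $|X^{irr}(M)|=p_1^+p_2^+p_3^++p_1^-p_2^-p_3^-$.
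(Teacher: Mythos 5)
Your enumeration step contains a genuine error, and it is exactly the step where the theorem's hypotheses must be used. You claim that every admissible trace triple lies off the reducible locus, and that "the hypothesis $e(M)\neq 0$ is exactly what excludes the eigenvalue coincidences" that would make a triple reducible. This is false: the set of characters with $\chi(h)=\pm 2$ and $\chi(c_i)\neq\pm 2$ contains, in general, both irreducible characters and \emph{exceptional abelian} ones, and the latter can occur even when $e(M)\neq 0$. Concretely, for $M=M(S^2;\ \frac13,\frac13,\frac13)$ (so $e(M)=\pm1\neq 0$) the diagonal representation with $\rho(h)=I$ and $\rho(c_i)=\mathrm{Diag}(\omega,\omega^{-1})$, $\omega=e^{2\pi i/3}$, satisfies all relations; its trace triple $(-1,-1,-1)$ is admissible in your sense ($\epsilon=+1$, all $\ell_i$ even) yet reducible, and for this manifold $|X^{irr}(M)|=1$, not $p_1^+p_2^+p_3^++p_1^-p_2^-p_3^-=2$. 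So your counting argument, which invokes only $e(M)\neq 0$, proves a false statement; no parity computation can repair it. The correct structure (as in the paper) is: the trace map is a bijection from the set of characters with $\chi(h)=\pm2$, $\chi(c_i)\neq\pm2$ onto the admissible triples, giving $|X^{irr}(M)|=p_1^+p_2^+p_3^++p_1^-p_2^-p_3^--x_M$ where $x_M$ counts exceptional abelian characters (Proposition \ref{prop:irredCharac}), and the hypotheses ($H_1(M,\Z)$ being $2$-torsion or weak coprimality) are needed precisely to show $x_M=0$ (Proposition \ref{p.excepCharac}). You instead reserve those hypotheses solely for reducedness along the reducible locus; note that these are in fact the same phenomenon, since the exceptional abelian characters are exactly the non-reduced points (the paper's Lemma \ref{lemma:excepCharacNonreduced} gives $\dim H^1(M,\mathrm{Ad}\,\rho)=2$ there), which is why your separation of the two issues cannot work.

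Two further points. First, your treatment of the reducible locus --- which you yourself flag as the main obstacle --- is not carried out, and it also needs a nontrivial scheme-theoretic ingredient you omit: at a \emph{reducible} character, vanishing of $H^1(\pi_1(M),\mathrm{Ad}\,\rho)$ (equivalently of the twisted groups $H^1(\pi_1(M),\C_{\chi^{\pm2}})$, since the trivial summand dies because $M$ is a rational homology sphere) does not by itself identify the Zariski tangent space of the GIT quotient $\cX(M)$; the paper invokes results of Sikora and Heusener--Porti for this bridge, and "finiteness upgrades pointwise tangent-space vanishing to reducedness" is only valid after that identification. Second, on the positive side, your argument at irreducible characters --- passing to the base orbifold group via the central extension and the Lyndon--Hochschild--Serre sequence (legitimate since $\mathrm{Ad}\,\rho$ kills $h$), then quoting rigidity of triangle groups --- is a genuine and attractive alternative to the paper's explicit cocycle computation in Lemma \ref{lemma:irredReduced}, provided you justify $H^1=0$ for irreducible $SL_2(\C)$-representations of triangle groups (Teichm\"uller rigidity only covers the Fuchsian ones) rather than leaving it as a slogan.
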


If  $p_1,p_2,p_3$ are weakly coprime and at most one is even, our count of irreducible characters in Theorem \ref{thm:reducedness} becomes
$$|X^{irr}(M)|=\frac{(p_1-1)(p_2-1)(p_3-1)}{4}.$$
When $p_1,p_2,p_3$  are pairwise coprime,  and furthermore, $q_1,q_2,q_3$ are chosen so that $M$ is a homology sphere, the Seifert manifold $M$ is the Brieskorn homology sphere  $\Sigma(p_1, p_2, p_3)$ and the above formula for
$|X^{irr}(M)|$ was previously discovered in \cite{BC06}, and it gives the $\slC$-Casson invariant of $M$.

Since any rational homology sphere $M$ has $\frac12(|H_1(M,\Z)|+ |H_1(M,\Z/2\Z)|)$ abelian $SL(2,\C)$-characters
(Lemma \ref{lemma:abelianChar}), we conclude:

\begin{corollary}
Under the assumptions of Theorem \ref{thm:reducedness},
$$\dim_{\Q(A)}  \S(M) = p_1^+p_2^+p_3^+ + p_1^-p_2^-p_3^- + \frac12(|H_1(M,\Z)|+ |H_1(M,\Z/2\Z)|).$$
\end{corollary}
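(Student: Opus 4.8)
The plan is to assemble the statement from the three results already in hand: Corollary~\ref{reduced}, Theorem~\ref{thm:reducedness}, and Lemma~\ref{lemma:abelianChar}. First I would record that under the hypotheses $M$ is a non-Haken rational homology sphere. Indeed, a Seifert manifold fibering over $S^2$ with nonzero Euler number $e(M)$ has $b_1(M)=0$, so $H_1(M,\Z)$ is finite and $M$ is a rational homology sphere; in particular $M\neq S^2\times S^1$. Theorem~\ref{thm:reducedness} shows that $\cX(M)$ is reduced and produces a finite count of irreducible characters, and since $H_1(M,\Z)$ is finite there are only finitely many abelian characters as well, so $X(M)$ is finite and $\dim X(M)=0$. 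By Theorem~\ref{t.SeifertX}, a Seifert manifold with $\dim X(M)=0$ that is not $S^2\times S^1$ is non-Haken, so Corollary~\ref{reduced} applies.

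With these facts in place, the computation is immediate. Since $\cX(M)$ is reduced by Theorem~\ref{thm:reducedness}, the final clause of Corollary~\ref{reduced} gives
$$\dim_{\Q(A)}\S(M)=|X(M)|.$$
Next I would split the character variety into its reducible and irreducible loci, $X(M)=X^{red}(M)\sqcup X^{irr}(M)$, and use that over $\slC$ every reducible representation has the same character as its diagonalization, so that the reducible characters coincide exactly with the abelian characters, $X^{red}(M)=X^{ab}(M)$. This yields $|X(M)|=|X^{ab}(M)|+|X^{irr}(M)|$.

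Finally I would plug in the two counts. Because $M$ is a rational homology sphere, Lemma~\ref{lemma:abelianChar} gives $|X^{ab}(M)|=\tfrac12\bigl(|H_1(M,\Z)|+|H_1(M,\Z/2\Z)|\bigr)$, while Theorem~\ref{thm:reducedness} gives $|X^{irr}(M)|=p_1^+p_2^+p_3^++p_1^-p_2^-p_3^-$. Adding these produces the claimed formula.

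The steps are individually routine; the only points requiring genuine care are the verification that $M$ falls simultaneously under the hypotheses of all three inputs---most delicately that it is non-Haken, so that Corollary~\ref{reduced} is available---and the disjointness of the reducible and irreducible strata, ensuring no character is double-counted. Neither is a serious obstacle, since the reducedness of $\cX(M)$ supplied by Theorem~\ref{thm:reducedness} is precisely what converts the inequality of Corollary~\ref{reduced} into the equality that drives the whole computation.
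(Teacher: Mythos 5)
Your proof is correct and follows essentially the same route as the paper: it combines Corollary~\ref{reduced} (reducedness converts the inequality into $\dim_{\Q(A)}\S(M)=|X(M)|$), Theorem~\ref{thm:reducedness} (the irreducible count), and Lemma~\ref{lemma:abelianChar} (the abelian count), with the decomposition $X(M)=X^{irr}(M)\sqcup X^{ab}(M)$. Your extra verification that $M$ is non-Haken---via finiteness of $X(M)$ and Theorem~\ref{t.SeifertX}---is a legitimate way to justify applying Corollary~\ref{reduced}, matching what the paper leaves implicit.
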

For the case of weakly coprime $p_1,p_2,p_3$ we also find explicit bases of $\C[\cX(M)]$ and of $\S(M,\Q(A))$ (Theorem \ref{t:basis}).

It is conjectured  \cite[Section 6.3]{GJS19}
  that $\dim_{\Q(A)} S(M, \Q(A))$ is equal to
the dimension of the zero degree part of the  Abouzaid-Manolescu 
homology $HP^{\bullet}_{\#}(M)$
 \cite{AM20}. By  \cite[Theorem 1.4]{AM20}, if  $\cX(M)$ is finite and reduced, the latter dimension is  $|X(M)|$.  So Theorem \ref{thm:reducedness} and Corollary \ref{reduced} provide new infinite families of 3-manifolds for which the conjecture holds.

\vskip 0.1in

\subsection{Outline of contents}
The paper is organized as follows: In Section \ref{s.skeinmodules} we discuss relations between different, related, properties of skein modules. In Section \ref{sec:Haken} we  prove Theorem \ref{thm:HakenSFScharac}. 
 Section \ref{sec:non-Haken} is devoted to the proof of Theorem \ref{main-i}. First, 
 using our earlier work \cite[Theorem 1.1]{DKS} and Theorem \ref{thm:HakenSFScharac}, 
 we reduce the proof of Theorem \ref{main-i} to showing that the skein module $\S(M,\Z[A^{\pm 1}])$ is finitely generated
 for any Seifert fibered space over $S^2$ with at most three exceptional fibers and with a non-zero Euler number.
 We prove the latter statement by skein-theoretic techniques.
In Section \ref{sec:characters}, we compute  $|X(M)|$ for all  non-Haken Seifert fibered 3-manifolds and, in particular, we prove Theorem \ref{thm:reducedness}.
In the case where $p_1,p_2,p_3$ are weakly coprime we also compute a basis of $\S(M)$.

\vskip 0.1in

\subsection{Acknowledgement}
During the course of this work  Detcherry  was  partially supported by
 the projects ”NAQI-34T” (ANR-23-ERCS-0008) and by the project ``CLICQ" of the R\'egion Bourgogne Franche Comt\'e.
Kalfagianni  was partially supported by the NSF grants DMS-2004155 and DMS-2304033, and Sikora  was partially supported by the Simons Foundation grant 957582.

%%%%%%%%%%%%%%%%%%%%%%%%%%%%%

\section{Relations between different properties of skein modules}\label{s.skeinmodules}
In this section we clarify the relation  of commonly used properties of  3-manifolds skein modules $\S(M,R)$ for  $R=\Q[A^{\pm 1}], \Z[A^{\pm 1}]$,  and $\Q(A)$.
The properties and the corresponding rings are as follows:

\begin{enumerate}
\item $\S(M, R)$ is free over $R=\Z[A^{\pm 1}]$
\item $\S(M, R)$ is finitely generated over $R=\Z[A^{\pm 1}]$.
\item $\S(M,R)$ is tame (over $R=\Q[A^{\pm 1}]$).
\item $\S(M, R)$ is torsion free over $R=\Z[A^{\pm 1}]$.
\item $\S(M)$ has no $(A+1)$- nor $(A-1)$-torsion over $R=\Z[A^{\pm 1}]$.
\item $X(M)$ is finite.
\item $X(M)$ is finite and $\S(M, R)$ is generated over $R=\Z[A^{\pm 1}]$ by $|X(M)|$ elements.
\end{enumerate}

The next proposition describes the relations between above properties.

\begin{proposition} \label{prop:implications} Let $M$ be a closed $3$-manifold. We have the following implications between the properties (1)-(7) above:
$$\begin{array}{rcccccl}

 & &  (2) & \Longrightarrow & (3) & &
\\ & & \Uparrow & &  & &
\\ (7) & \Longrightarrow & (1) & \Rightarrow (4)  \Rightarrow & (5) & \Longrightarrow & (6)	
	
\end{array} $$
Moreover, if $\cX(M)$ is reduced, then we also have $(3)\Rightarrow (5).$
\end{proposition}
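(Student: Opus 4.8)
The plan is to prove the seven implications separately, dispatching the formal ones quickly and spending the effort on $(5)\Rightarrow(6)$ and on the concluding reducedness clause, which carry the actual content. First, $(1)\Rightarrow(4)\Rightarrow(5)$ are immediate, since a free module is torsion-free and a torsion-free module has no $(A\pm1)$-torsion. For $(1)\Rightarrow(2)$ I would invoke \cite{GJS19}, by which $\S(M):=\S(M,\Q(A))$ is finite-dimensional; since skein modules base-change along $\Z[A^{\pm1}]\to\Q(A)$, a free $\S(M,\Z[A^{\pm1}])$ must have rank $d:=\dim_{\Q(A)}\S(M)<\infty$ and so is finitely generated. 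For $(2)\Rightarrow(3)$ I would base-change to the PID $\Q[A^{\pm1}]$ and apply the structure theorem: a finitely generated module is a finite sum of cyclics, and only finitely many cyclotomic factors can occur, so $\Q[A^{\pm1}]/(\phi_{2N})$ embeds for at most finitely many odd $N$, giving tameness. For $(7)\Rightarrow(1)$, a generating set of $|X(M)|$ elements gives a surjection $\pi\colon\Z[A^{\pm1}]^{|X(M)|}\twoheadrightarrow\S(M,\Z[A^{\pm1}])$; tensoring with $\Q(A)$ and using the bound $\dim_{\Q(A)}\S(M)\geq|X(M)|$ from \cite{DKS} makes the induced map an isomorphism, so $\ker\pi$ is torsion, while being a submodule of a free module it is torsion-free, hence zero.

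The heart of the argument is $(5)\Rightarrow(6)$. My plan is to pass to $\C[A^{\pm1}]$ (flat over $\Z[A^{\pm1}]$, so property (5) survives) and localize at the prime $(A+1)$, obtaining a module $N$ over the DVR $\mathcal O=\C[A^{\pm1}]_{(A+1)}$. Absence of $(A+1)$-torsion makes $N$ torsion-free, hence flat, over $\mathcal O$; lifting a $\C$-basis of the fiber $N/(A+1)N$ and clearing denominators shows these lifts stay $\C(A)$-independent in $N\otimes_{\mathcal O}\C(A)$, so $\dim_\C \S(M,\C[A^{\pm1}])/(A+1)\le\dim_{\C(A)}N\otimes_{\mathcal O}\C(A)=d<\infty$. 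Finally, evaluating at $A=-1$ identifies this specialization with the $\slC$-character ring, which surjects onto $\C[X(M)]$ by the theorems of Bullock and Przytycki-Sikora; a finite-dimensional coordinate ring forces $X(M)$ to be finite.

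For the concluding clause, that $(3)$ together with reducedness of $\cX(M)$ gives $(5)$, my plan is a dimension count at $A=-1$. By \cite{DKS}, tameness already forces $X(M)$ finite, so $\C[X(M)]\cong\C^{|X(M)|}$; reducedness gives $\C[\cX(M)]=\C[X(M)]$, and Przytycki-Sikora identifies $\S(M,\C[A^{\pm1}])/(A+1)$ with $\C[\cX(M)]$, so its dimension is $|X(M)|$. Writing the tame module as $\bigoplus_j\Q[A^{\pm1}]/(f_j)$, the same specialization has dimension $d+t_{-1}$, where $d$ counts the free summands and $t_{-1}$ the torsion summands with $(A+1)\mid f_j$. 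The lower bound $d\ge|X(M)|$ from \cite{DKS} then forces $t_{-1}=0$, i.e.\ no $(A+1)$-torsion; and since an oriented $3$-manifold is spin, Barrett's isomorphism interchanging the parameters $A$ and $-A$ swaps $(A+1)$- and $(A-1)$-torsion, ruling out the latter as well. This yields $(5)$.

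I expect the main obstacle to be the two character-ring steps. The subtle point is that property $(5)$ does \emph{not} presuppose finite generation of $\S(M,\Q[A^{\pm1}])$, so in $(5)\Rightarrow(6)$ the structure theorem is unavailable and one must control the $A=-1$ fiber through flatness over the local ring at $(A+1)$; correctly matching the character-ring identifications of Bullock and Przytycki-Sikora (including the reduced/non-reduced bookkeeping and the $A\leftrightarrow-A$ symmetry) is where the care is needed.
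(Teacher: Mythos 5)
Your handling of the formal implications coincides with the paper's: $(1)\Rightarrow(4)\Rightarrow(5)$ and $(2)\Rightarrow(3)$ from the definitions and the structure theorem, $(1)\Rightarrow(2)$ from the finiteness theorem of \cite{GJS19}, and $(7)\Rightarrow(1)$ by tensoring the presentation $\Z[A^{\pm1}]^{|X(M)|}/I$ with $\Q(A)$ and using the lower bound $|X(M)|\le\dim_{\Q(A)}\S(M)$ of \cite{DKS} to force $I=0$ (one small point: that bound has tameness as a hypothesis, so you should note that (7) gives finite generation, hence tameness, before invoking it; the paper is equally terse). Where you genuinely diverge is that the paper does not prove $(5)\Rightarrow(6)$ or the reducedness clause at all: it cites \cite{BD} for the former and the last part of \cite[Theorem 3.1]{DKS} for the latter, whereas you supply direct arguments. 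Your proof of $(5)\Rightarrow(6)$ is correct: absence of $(A+1)$-torsion passes to $\C[A^{\pm1}]$ by flatness of $\Z\to\C$; torsion-freeness over the DVR $\C[A^{\pm1}]_{(A+1)}$ lets you lift independent elements of the $A=-1$ fiber, giving $\dim_\C \S(M,\C[A^{\pm1}])/(A+1)\le \dim_{\C(A)}\S(M,\C(A))<\infty$; and the Bullock/Przytycki--Sikora identification then bounds $\dim_\C\C[X(M)]$, forcing $X(M)$ finite. This buys self-containedness where the paper black-boxes external results.

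Your proof of the final clause, however, has a coefficient gap. The dimension count --- tame decomposition of $\S(M,\Q[A^{\pm1}])$, specialization at $A=-1$, comparison of $d+t_{-1}$ with $|X(M)|$ --- lives entirely over $\Q[A^{\pm1}]$ and $\C$, so what it excludes is $(A\pm1)$-torsion in $\S(M,\Q[A^{\pm1}])$. Property (5) as stated in the paper concerns $\S(M,\Z[A^{\pm1}])$. An element of $\S(M,\Z[A^{\pm1}])$ annihilated simultaneously by an integer and by $A+1$ (think of a submodule of type $\Z[A^{\pm1}]/(2,A+1)$) dies in $\S(M,\Q[A^{\pm1}])$ and is therefore invisible to tameness, to the \cite{DKS} inequality, and to the complex character ring; no argument of this rational/complex kind can rule it out. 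Note the asymmetry with your $(5)\Rightarrow(6)$ step, where (5) is a \emph{hypothesis} and the integral-to-complex direction is harmless; here (5) is the \emph{conclusion}, and you only obtain its $\Q[A^{\pm1}]$-form. (The Barrett step itself is fine, since his isomorphism is defined over $\Z[A^{\pm1}]$.) Since the paper delegates this clause to \cite[Theorem 3.1]{DKS}, you should either supply the missing integral input or state explicitly that your argument establishes the rational version of (5).
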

\begin{proof}
The implications $(1)\Rightarrow (4) \Rightarrow (5)$ and $(2)\Rightarrow (3)$ are immediate from the definitions.

Suppose that $\S(M, \Z[A^{\pm 1}])$ is free. Then it must have finite rank  since by the finiteness theorem \cite{GJS19} the dimension  $ \dim_{\Q(A)}  \S(M, \Q(A) )$
is finite. Hence we have
the implication $(1) \Rightarrow (2)$.

 The implication $(5)\Rightarrow (6)$ is also a consequence of the Finiteness theorem, though less immediate. A proof is given in  \cite{BD}.
 
The implication $(3)\Rightarrow (6)$ follows from the main theorem of \cite{DKS}. Indeed, if $\S(M)$ is tame, then $\dim_{Q(A)}(\S(M,\Q(A)))\geq |X(M)|,$ therefore $X(M)$ is finite.

For reduced $\cX(M)$, the implication $(3) \Rightarrow (5)$ is the last part of \cite[Theorem 3.1]{DKS}.

Finally, if $\S(M, \Z[A^{\pm 1}])$ is generated by $|X(M)|$ elements, then it is isomorphic to $\Z[A^{\pm 1}]^{|X(M)|}/I$ where $I$ is some $\Z[A^{\pm 1}]$-submodule of $\Z[A^{\pm 1}]^{|X(M)|}.$ If $I$ is not the trivial submodule, then we would have that $S(M,\Q(A))=\left(\Z[A^{\pm 1}]^{|X(M)|}/I\right)\underset{\Z[A^{\pm 1}]}{\otimes}\Q(A)$ would have dimension $<|X(M)|$ over $\Q(A),$ which contradicts \cite[Theorem 3.1]{DKS}. Hence, we must have $I=0,$ and $(7)\Rightarrow (1).$
\end{proof}

%%%%%%%%%%%%%%%%%%%%%%%%
\section{Haken Seifert fibered manifolds}
\label{sec:Haken}
Recall that all $3$-manifolds $M$ and essential surfaces in them are assumed closed and oriented in this paper.
A closed, irreducible 3-manifold is \emph{Haken} if it contains an embedded incompressible
surface. The only non-irreducible closed  Seifert fibered 3-manifolds are $S^2\times S^1$ and $\R\P^3\#\R\P^3 $ \cite{Ja}.
In this section, we will prove Theorem \ref{thm:HakenSFScharac}.

We denote by $M(B; \ \frac{q_1}{p_1} \ldots  \frac{q_n}{p_n})$ the Seifert fibered $3$-manifold with the fiber space being the $2$-orbifold $B$ with exceptional fiber invariants $(q_1, p_1)\ldots (q_n, p_n)$ in the notation of  \cite{JN83}.

\begin{proposition} \label{Zetner} Suppose that a closed $3$-manifold $M$ admits a Seifert fibration over $S^2$ with at least four exceptional fibers or over $\R\P^2$ with at least two exceptional fibers. Then $M$ is Haken and $X(M)$ is infinite.
\end{proposition}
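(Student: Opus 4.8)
The plan is to prove both assertions — that $M$ is Haken and that $X(M)$ is infinite — by exploiting the single structural fact that, under either hypothesis, the base $2$-orbifold $\mathcal O$ of the Seifert fibration is ``large'': its orbifold Euler characteristic is nonpositive and its orbifold fundamental group is infinite and not virtually cyclic. Writing $\pi\colon M\to\mathcal O$ for the Seifert projection, I would first record that $\chi^{orb}(S^2(p_1,\dots,p_n))=2-n+\sum_i 1/p_i\le 0$ as soon as $n\ge 4$, and that $\chi^{orb}(\R\P^2(p_1,p_2))=-1+1/p_1+1/p_2\le 0$.

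For the Haken assertion I would exhibit an essential simple closed curve $\gamma$ in $\mathcal O$: for $S^2(p_1,\dots,p_n)$ take a curve splitting the cone points into two groups of at least two (possible since $n\ge 4$), and for $\R\P^2(p_1,p_2)$ take the boundary of a disk containing both cone points, whose complement is a M\"obius band. In each case $\gamma$ is two-sided and essential, so its preimage $F=\pi^{-1}(\gamma)$ is a two-sided vertical surface that is incompressible precisely because $\gamma$ is essential in $\mathcal O$. It then remains to verify irreducibility: since $\pi_1(\mathcal O)$ is infinite and not virtually cyclic, neither is $\pi_1(M)$, so $M$ is neither $S^2\times S^1$ nor $\R\P^3\#\R\P^3$ (whose groups $\Z$ and $D_\infty$ are virtually cyclic); by the classification of reducible closed Seifert manifolds recalled above, $M$ is irreducible, hence Haken.

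For the infinitude of $X(M)$ I would construct a positive-dimensional family of characters directly, without invoking Theorem \ref{thm:HakenSFScharac}. With $h$ the regular fiber, $\pi_1(M)$ is a central extension $1\to\langle h\rangle\to\pi_1(M)\to\pi_1(\mathcal O)\to 1$, so I would search for representations $\rho$ with $\rho(h)=\epsilon I$ for a suitable sign $\epsilon\in\{\pm1\}$; these descend to projective representations of $\pi_1(\mathcal O)$, and the Seifert relations $x_i^{p_i}h^{q_i}=1$ become $\rho(x_i)^{p_i}=\epsilon^{q_i}I$, which admit noncentral elliptic solutions once $\epsilon$ is chosen so that $\epsilon^{q_i}$ is a $p_i$-th power of a root of unity of order dividing $2p_i$. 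Fixing noncentral semisimple conjugacy classes for the $\rho(x_i)$ and imposing $\prod_i\rho(x_i)=\pm I$, the resulting set of characters is the relative (parabolic) character variety of the punctured sphere (or $\R\P^2$), of expected dimension $2(n-3)$ in the $S^2(p_1,\dots,p_n)$ case, which is positive for $n\ge 4$; an analogous count handles the $\R\P^2(p_1,p_2)$ case. In the hyperbolic range $\chi^{orb}<0$ I would make this rigorous by deforming a discrete faithful Fuchsian representation of $\pi_1(\mathcal O)$ inside $PSL_2(\R)\subset PSL_2(\C)$ through Teichm\"uller space, which has positive dimension exactly under our numerical hypotheses.

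The main obstacle I anticipate is the bookkeeping of the central extension and the sign $\epsilon$: one must check that the chosen family of projective (or Fuchsian) representations of $\pi_1(\mathcal O)$ genuinely lifts to $SL_2(\C)$-representations of $\pi_1(M)$ satisfying all Seifert relations, including $x_1\cdots x_n=h^{b}$, and that distinct points of the relative character variety yield distinct characters of $\pi_1(M)$. A secondary difficulty is the two Euclidean boundary cases $\chi^{orb}=0$, namely $S^2(2,2,2,2)$ and $\R\P^2(2,2)$, where the base is flat and the cone orders are all $2$; here one is forced to take $\epsilon=-1$ so that $\rho(x_i)^2=-I$ has noncentral trace-zero solutions, and one verifies by hand that the resulting ``pillowcase'' family of trace-zero tuples with prescribed product is still positive-dimensional.
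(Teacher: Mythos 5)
Your proposal splits into two independent halves, and they fare differently. The Haken half is correct and is genuinely a different route from the paper: the paper never argues Hakenness directly in this proposition, but deduces it from the infinitude of $X(M)$ via Culler--Shalen plus the classification of reducible closed Seifert manifolds (this is how it is used in the proof of Theorem \ref{thm:HakenSFScharac}). Your argument --- an orbifold-essential curve in the base splitting the cone points $2+2$ (resp.\ bounding a disk containing both cone points on $\R\P^2$), whose vertical preimage is an incompressible torus, with irreducibility coming from $\chi^{orb}\le 0$ --- is the classical $3$-manifold-topology proof and is sound, provided you cite the standard facts that a closed Seifert manifold over an orbifold with $\chi^{orb}\le 0$ is aspherical and that vertical tori over orbifold-essential curves are $\pi_1$-injective in that setting. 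What it buys is a proof of Hakenness independent of character-variety machinery.

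The second half, however, has a genuine gap, and it is exactly the one you flag as ``the main obstacle'': you never show that your family of $PSL_2$-representations of $\pi_1^{orb}(\mathcal{O})$ lifts to $\slC$-representations of $\pi_1(M)$. This is not routine bookkeeping. Once $\rho(h)=\epsilon I$ is fixed, the relations $c_i^{p_i}h^{q_i}=1$ pin down which lifts of each $\rho(c_i)$ are admissible (there is a free sign only when $p_i$ is even), and the relation $c_1\cdots c_n=1$ then holds in $SL_2$ only up to a sign governed by the Euler class of the Fuchsian representation. One must verify that this sign can be made $+1$ for some admissible choice of $\epsilon$ and of lift signs; this does work out (for the hyperbolic structure over $S^2(p_1,\dots,p_n)$, the Milnor--Wood equality $e=-\chi^{orb}$ shows the relevant integer winding is $n-2$, whose parity combined with the sign freedom at even $p_i$ saves the argument), but that computation is the actual mathematical content and is absent from your proposal. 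Moreover, over $\R\P^2$ the extension $1\to\langle h\rangle\to\pi_1(M)\to\pi_1^{orb}\to 1$ is not even central (the presentation has $aha^{-1}=h^{-1}$), so your framing needs repair there, and the Euler-class analysis for a non-orientable base is different. Finally, the phrase ``of expected dimension $2(n-3)$'' is not an argument: expected dimension controls neither non-emptiness nor actual dimension of a relative character variety. The paper's proof is built precisely to avoid all of this: it constructs $SU(2)$-representations of $\pi_1(M)$ directly, with $\rho(h)=-I$, using the fact from \cite{SZ} that $\rho(c_i)^{p_i}=(-1)^{q_i}I$ always has solutions of angle in $[\pi/4,2\pi/3]$, and then the angle triangle inequality of Lemma \ref{lemma:SZ} to produce a continuum of representations distinguished by $\tr\,\rho(c_1c_2)$ (over $\R\P^2$, by an explicit quaternionic family) --- no $PSL_2$ quotient, no lifting obstruction, no Euler class. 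To complete your approach you would either have to carry out the Euler-class parity computation case by case, or relax the rigid Fuchsian conjugacy classes to flexible ones, at which point you are essentially re-deriving the paper's argument.
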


We prove Proposition \ref{Zetner} by constructing infinitely many non-conjugate $SU(2)$-representations of $\pi_1(M)$. We will say that a matrix $M\in SU(2)$ has angle $\theta\in [0,\pi]$ if it is conjugated to 
the diagonal matrix with entries $e^{i\theta}$ and $e^{-i\theta}$. 

We will separate the proof of the proposition in two cases, according to whether the base is $S^2$ or $\mathbb{RP}^2.$

We recall that by \cite[Theorem 6.1]{JN83}, the fundamental group of a Seifert manifold of the form $M=M(S^2; \ \frac{q_1}{p_1}, \ldots , \frac{q_n}{p_n})$
has a presentation 
\begin{equation}\label{eq:presentation_pi1}\pi_1(M)=\langle c_1,\ldots,c_n,h\ |\ [h,c_i]=c_i^{p_i} h^{q_i}=1 \ \forall i,\ c_1 \ldots c_n=1 \rangle.
\end{equation}

To treat this case, we will use the following lemma:

\begin{lemma}
	\label{lemma:SZ}\cite[Lemma 2.4]{SZ} There exists a representation
	$$\rho:\langle c_1,c_2,c_3 | c_1c_2c_3 \rangle \longrightarrow SU(2)$$ that maps each $c_i$ to an element of $SU(2)$ of angle $\theta_i$ if and only if
	\begin{equation}\label{eq:condition}|\theta_1-\theta_2|\leq \theta_3 \leq \min(\theta_1+\theta_2,2\pi-\theta_1-\theta_2).
	\end{equation}
\end{lemma}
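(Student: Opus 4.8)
The plan is to reduce the statement to an elementary question about the possible traces of a product of two elements of $SU(2)$ with prescribed traces. First I would note that the group $\langle c_1,c_2,c_3 \mid c_1c_2c_3\rangle$ is free on $c_1,c_2$, since the relation lets us eliminate $c_3=(c_1c_2)^{-1}$. Thus a representation $\rho$ is nothing but an arbitrary choice of $A:=\rho(c_1)$ and $B:=\rho(c_2)$ in $SU(2)$, with $\rho(c_3)=(AB)^{-1}$ forced. Since the angle of $g\in SU(2)$ is conjugation-invariant and determined by the trace via $\tr(g)=2\cos\theta$, and since $g^{-1}$ has the same trace and hence the same angle as $g$, the lemma is equivalent to the following: for $\theta_1,\theta_2\in[0,\pi]$ the value $\theta_3\in[0,\pi]$ is the angle of $AB$ for some $A,B$ of angles $\theta_1,\theta_2$ if and only if \eqref{eq:condition} holds.

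To analyze the angle of $AB$ I would use the identification of $SU(2)$ with the unit quaternions, writing a matrix of angle $\theta$ as $\cos\theta + \sin\theta\,\hat u$, where $\hat u$ is a purely imaginary unit quaternion (the ``axis''). With $A=\cos\theta_1+\sin\theta_1\hat u$ and $B=\cos\theta_2+\sin\theta_2\hat v$, the quaternion multiplication rule $\hat u\hat v=-\,\hat u\cdot\hat v+\hat u\times\hat v$ shows that the scalar (real) part of $AB$ equals
$$\cos\theta_1\cos\theta_2-\sin\theta_1\sin\theta_2\,(\hat u\cdot\hat v).$$
Since the scalar part of any unit quaternion of angle $\theta_3$ is $\cos\theta_3$, writing $\hat u\cdot\hat v=\cos\phi$ for the angle $\phi\in[0,\pi]$ between the two axes yields the key identity
$$\cos\theta_3=\cos\theta_1\cos\theta_2-\sin\theta_1\sin\theta_2\cos\phi.$$

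The remaining point is that $\phi$ can be chosen arbitrarily in $[0,\pi]$: taking $A$ diagonal and rotating the axis of $B$ by a conjugation realizes any prescribed angle between the axes. Hence, as $\phi$ ranges over $[0,\pi]$ and $\cos\phi$ over $[-1,1]$, the quantity $\cos\theta_3$ sweeps exactly the interval with endpoints $\cos(\theta_1+\theta_2)$ (at $\phi=0$) and $\cos(\theta_1-\theta_2)=\cos|\theta_1-\theta_2|$ (at $\phi=\pi$). I would then finish by translating the resulting condition $\cos\theta_3\in[\cos(\theta_1+\theta_2),\cos|\theta_1-\theta_2|]$ into the claimed inequality, using that $\cos$ is decreasing on $[0,\pi]$. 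The lower endpoint gives $\cos\theta_3\le\cos|\theta_1-\theta_2|$, i.e.\ $\theta_3\ge|\theta_1-\theta_2|$, directly. The upper endpoint requires a case distinction: when $\theta_1+\theta_2\le\pi$ the condition $\cos\theta_3\ge\cos(\theta_1+\theta_2)$ gives $\theta_3\le\theta_1+\theta_2$, whereas when $\theta_1+\theta_2>\pi$ one has $\cos(\theta_1+\theta_2)=\cos(2\pi-\theta_1-\theta_2)$ with $2\pi-\theta_1-\theta_2\in[0,\pi]$, giving $\theta_3\le 2\pi-\theta_1-\theta_2$; in both cases the upper bound is $\min(\theta_1+\theta_2,\,2\pi-\theta_1-\theta_2)$, matching \eqref{eq:condition}. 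The main obstacle is precisely this case analysis at the top of the range: the appearance of $2\pi-\theta_1-\theta_2$ is invisible at the level of the trace identity and arises only because the angle is confined to $[0,\pi]$, so that the cosine identity does not distinguish $\theta_1+\theta_2$ from its reflection $2\pi-\theta_1-\theta_2$.
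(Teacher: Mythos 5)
Your proof is correct. Note that the paper does not actually prove this lemma: it is quoted as \cite[Lemma 2.4]{SZ}, so there is no internal argument to compare against, and your write-up supplies a genuine self-contained proof. Your route --- observing that $\langle c_1,c_2,c_3 \mid c_1c_2c_3\rangle$ is free on $c_1,c_2$, passing to unit quaternions, extracting the real part $\cos\theta_1\cos\theta_2-\sin\theta_1\sin\theta_2\cos\phi$ of the product, and letting the angle $\phi$ between the two axes sweep $[0,\pi]$ --- is the standard elementary argument for this fact, and it is complete: the degenerate case $\sin\theta_1\sin\theta_2=0$ is handled automatically (the attainable interval for $\cos\theta_3$ collapses to the single point $\cos(\theta_1+\theta_2)=\cos(\theta_1-\theta_2)$, consistent with the inequalities), and your case distinction at the upper endpoint correctly explains why $2\pi-\theta_1-\theta_2$ must appear even though it is invisible in the trace identity. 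One small point worth making explicit is that realizing an arbitrary axis angle $\phi$ needs no conjugation trick at all: since only existence of some pair $(A,B)$ is required, one may simply choose the axes $\hat u=i$ and $\hat v=\cos\phi\, i+\sin\phi\, j$ directly. Your quaternionic coordinates are exactly the formalism the authors themselves set up later, in the proof of Proposition \ref{Zetner} for the $\R\P^2$ case, so your argument is fully compatible with the paper and could be inserted there at essentially no cost.
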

Note that since the conjugacy class of an element of $SU(2)$ is determined by its angle, the lemma can be alternatively stated as follows: given $\theta_1,\theta_2,\theta_3$ that satisfy Equation \ref{eq:condition}, and $A\in SU(2)$ of angle $\theta_1,$ one can find $B\in SU(2)$ of angle $\theta_2$ so that $AB$ has angle $\theta_3.$

We can now prove the following, which will prove the first case of Proposition \ref{Zetner}:
\begin{lemma}\label{lemma:S2base}
	Let $M=M(S^2; \ \frac{q_1}{p_1}, \ldots , \frac{q_n}{p_n})$ with $2\leq p_1\leq \ldots \leq p_n,$ and let $k\geq 6.$ Then for any angles $\varphi_2,\ldots,\varphi_{n-2} \in [\frac{5\pi}{12},\frac{\pi}{2}],$ there is a representation $\rho:\pi_1(M)\rightarrow SU(2),$ such that $\rho(h)=-I$ and for each $2\leq i \leq n-2,$ the element $\rho(c_1c_2 \ldots c_i)$ has angle $\varphi_i.$
\end{lemma}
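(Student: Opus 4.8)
The plan is to set $\rho(h)=-I$ and then construct the images $\rho(c_i)$ one at a time so that the partial products acquire the prescribed angles. Since $-I$ is central, the relations $[h,c_i]=1$ hold automatically, and $c_i^{p_i}h^{q_i}=1$ reduces to $\rho(c_i)^{p_i}=(-1)^{q_i}I$. An element of $SU(2)$ of angle $\theta$ has $p$-th power equal to $(-1)^{q}I$ exactly when $\theta=\frac{m\pi}{p}$ with $m\equiv q\pmod 2$; I will call such angles \emph{admissible} for $c_i$ (so $\theta_i=\frac{m_i\pi}{p_i}$, $0\le m_i\le p_i$, $m_i\equiv q_i$). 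With $\rho(h)$ fixed, building $\rho$ therefore amounts to choosing $\rho(c_1),\dots,\rho(c_n)$ with admissible angles, with total product $\rho(c_1)\cdots\rho(c_n)=I$ (the relation $c_1\cdots c_n=1$ of \eqref{eq:presentation_pi1}), and with $\rho(c_1\cdots c_i)$ of angle $\varphi_i$ for $2\le i\le n-2$. Throughout I use that the Seifert data satisfy $\gcd(p_i,q_i)=1$; in particular, when $p_i=2$ this forces $q_i$ odd, so that $\frac{\pi}{2}$ is admissible for $c_i$.

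Writing $d_i=\rho(c_1\cdots c_i)$, I would build the $\rho(c_i)$ recursively using the reformulation of Lemma~\ref{lemma:SZ} recorded after its statement. At an interior step $3\le i\le n-2$, the element $d_{i-1}$ of angle $\varphi_{i-1}$ is already defined, and I look for an admissible $\theta_i$ such that $\varphi_{i-1},\theta_i,\varphi_i$ satisfy \eqref{eq:condition}; Lemma~\ref{lemma:SZ} then supplies $\rho(c_i)$ of angle $\theta_i$ with $d_i=d_{i-1}\rho(c_i)$ of angle $\varphi_i$. Because $\varphi_{i-1},\varphi_i\in[\frac{5\pi}{12},\frac{\pi}{2}]$, the angles $\theta_i$ permitted by \eqref{eq:condition} form the interval $[\,|\varphi_{i-1}-\varphi_i|,\ \varphi_{i-1}+\varphi_i\,]$ (since $\varphi_{i-1}+\varphi_i\le\pi$ the second upper bound in \eqref{eq:condition} is inactive), of length $2\min(\varphi_{i-1},\varphi_i)\ge\frac{5\pi}{6}$. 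As the admissible angles for $c_i$ are spaced $\frac{2\pi}{p_i}$ apart, this interval contains one of them when $p_i\ge 3$, and it contains $\frac{\pi}{2}$ when $p_i=2$; hence a valid $\theta_i$ exists at every interior step.

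The two ends of the chain are the crux, and are handled symmetrically. At the start I must choose admissible $\theta_1,\theta_2$ with $\theta_1,\theta_2,\varphi_2$ satisfying \eqref{eq:condition}, giving $d_2$ of angle $\varphi_2$. At the finish, after $d_{n-2}$ is built, I choose admissible $\theta_{n-1},\theta_n$ with $\varphi_{n-2},\theta_{n-1},\theta_n$ satisfying \eqref{eq:condition}, use Lemma~\ref{lemma:SZ} to produce $\rho(c_{n-1})$ making $d_{n-1}=d_{n-2}\rho(c_{n-1})$ of angle $\theta_n$, and then set $\rho(c_n)=d_{n-1}^{-1}$; this has the admissible angle $\theta_n$ and forces $d_n=\rho(c_1\cdots c_n)=I$. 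In each case two grid angles must be chosen at once. I would take $\theta_1$ (resp.\ $\theta_n$) to be the admissible angle of $c_1$ (resp.\ $c_n$) closest to $\frac{\pi}{2}$, with offset $\delta$ from $\frac{\pi}{2}$; writing $\eta=\frac{\pi}{2}-\varphi_2\le\frac{\pi}{12}$, the window of partner angles allowed by \eqref{eq:condition} is then an interval of length $\pi-2\max(|\delta|,\eta)$. For most multiplicities this exceeds the partner grid spacing; the few remaining cases are settled by locating the nearest admissible partner angle inside the window.

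The main obstacle is precisely this endpoint analysis, since the grid of $c_1$ may have no point near $\frac{\pi}{2}$. The extreme case is $p_1=4$, where $\gcd(p_1,q_1)=1$ forces the angle $\frac{\pi}{4}$ or $\frac{3\pi}{4}$, i.e.\ offset $\frac{\pi}{4}$ and a window of length only $\frac{\pi}{2}$; here the length bound alone is insufficient, and one must instead check by position that the window still contains an admissible angle of $c_2$. For a multiplicity-$3$ partner this reduces to requiring $\frac{\pi}{3}$ or $\frac{2\pi}{3}$ to lie in the window, which holds exactly when $\varphi_2\ge\frac{5\pi}{12}$; this is what calibrates the interval $[\frac{5\pi}{12},\frac{\pi}{2}]$. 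Once all $\rho(c_i)$ have been chosen, every relation of \eqref{eq:presentation_pi1} holds by construction, so $\rho$ is a well-defined representation with the required properties. Letting $\varphi_2,\dots,\varphi_{n-2}$ vary over $[\frac{5\pi}{12},\frac{\pi}{2}]$ then yields the continuous families of representations that drive Proposition~\ref{Zetner}.
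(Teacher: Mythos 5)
Your construction is correct and follows the same skeleton as the paper's proof: set $\rho(h)=-I$ so that only the conditions $\rho(c_i)^{p_i}=(-1)^{q_i}I$ and $\rho(c_1)\cdots\rho(c_n)=I$ remain, build the $\rho(c_i)$ inductively through the reformulated Lemma~\ref{lemma:SZ} so that the partial products carry the prescribed angles $\varphi_i$, and produce the last two generators together so as to close the product up to $I$. Where you genuinely diverge is in how an admissible angle compatible with \eqref{eq:condition} is located at each step. The paper quotes, from the proof of \cite[Proposition 2.8]{SZ}, that each $c_i$ admits an admissible angle in $[\frac{\pi}{4},\frac{2\pi}{3}]$; with every $\theta_i$ confined to that interval, the left-hand side of \eqref{eq:condition} is at most $\frac{5\pi}{12}$ and the right-hand side at least $\frac{\pi}{2}$ at the initial, interior and terminal steps alike, so a single uniform inequality finishes the whole induction. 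You instead work with the raw grid $\frac{m\pi}{p_i}$, $m\equiv q_i \pmod 2$, comparing window length against grid spacing, and at the two ends anchoring at the admissible angle nearest $\frac{\pi}{2}$. This is self-contained (no appeal to \cite[Proposition 2.8]{SZ}) and has the virtue of exposing where the constant $\frac{5\pi}{12}$ becomes tight, but it costs a finite case analysis, and your bookkeeping of that analysis is slightly misplaced: by the ordering $p_1\le \dots \le p_n$, your ``extreme case'' ($p_1=4$ anchored against a multiplicity-$3$ partner $c_2$) cannot occur at the initial pair, since there the partner satisfies $p_2\ge p_1=4$ and the length bound $\frac{\pi}{2}\ge\frac{2\pi}{p_2}$ already suffices; it is at the terminal pair $(c_{n-1},c_n)$, where you chose to anchor on the larger multiplicity $p_n$, that the case $(p_n,p_{n-1})=(4,3)$ and the multiplicity-$2$ partners genuinely require positional checks. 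Those checks do all succeed (your observation that $\frac{2\pi}{3}$ lies in the window precisely when $\varphi\ge\frac{5\pi}{12}$ handles the worst one), so this is a matter of completeness of exposition rather than a gap in the method; alternatively, anchoring each end on the smaller of the two multiplicities, exactly as you already do at the initial pair, would let the length bound suffice everywhere except the trivial case of a multiplicity-$2$ partner, whose sole admissible angle $\frac{\pi}{2}$ is easily seen to lie in the window.
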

\begin{proof}
	Note that the condition $\rho(h)=-I$ implies that the commutation relations in the presentation \ref{eq:presentation_pi1} are automatically satisfied. Moreover, for each $i,$ $\rho(c_i)$ must satisfy $\rho(c_i)^{p_i}=(-1)^{q_i}I.$ As remarked in the proof of \cite[Proposition 2.8]{SZ}, for $p_i\geq 2$ and $(p_i,q_i)$ coprime, $\rho(c_i)$ satisfying the latter condition can always be chosen of angle $\frac{\pi}{4}\leq \theta_i \leq \frac{2\pi}{3}.$

	We will pick the values of $\rho(c_1),\ldots,\rho(c_n)$ inductively so that the angles of the $\rho(c_1c_2 \ldots c_i)$ are always $\varphi_i.$ First, we choose $\rho(c_1)$ and $\rho(c_2)$ so that $\rho(c_1c_2)$ has angle $\varphi_2.$ This is possible since, as we can assume $\frac{\pi}{4}\leq \theta_1,\theta_2 \leq \frac{2\pi}{3},$ the left-hand side of the condition in Lemma \ref{lemma:SZ} is at most $\frac{5\pi}{12},$ and the right hand side at least $\frac{\pi}{2},$ and $\varphi_2 \in [\frac{5\pi}{12},\frac{\pi}{2}].$ 
	
	For the inductive step, assuming we have chosen $\rho(c_1),\ldots,\rho(c_i),$ let us pick $\rho(c_{i+1})$ so that $\rho(c_1\ldots c_{i+1})$ has angle $\varphi_{i+1}.$ Note that the left hand-side of \ref{eq:condition} is less than $\frac{5\pi}{12}$ again, and the right hand side is more than $\frac{\pi}{2},$ so we can pick $\rho(c_{i+1})$ accordingly, by Lemma \ref{lemma:SZ}, since $\varphi_{i+1}\in [\frac{5\pi}{12},\frac{\pi}{2}]$.
	
	Finally, when $\rho(c_1),\ldots ,\rho(c_{n-2})$ have been chosen, we can pick $\rho(c_{n-1}),\rho(c_n)$ by the same reasoning.
	
	The representation $\rho$ now satisfies all relations in the presentation \ref{eq:presentation_pi1}, and furthermore, maps each $c_1\ldots c_i$ for $2\leq i \leq n-2$ to an element of angle $\varphi_i.$ 
\end{proof}

We note that Lemma \ref{lemma:S2base} actually implies that the dimension of $X(M)$ is at least $n-3$ for $M$ a Seifert manifold over $S^2$ with $n\geq 4$ exceptional fibers. We will however not need this fact, but just that $X(M)$ is infinite.
The proof of Proposition \ref{Zetner} now reduces to the other case, manifolds which fiber over $\R\P^2$ with at least $2$ exceptional fibers:

\begin{proof}[Proof of Proposition \ref{Zetner}]

Thanks to Lemma \ref{lemma:S2base}, we only need to treat the case of $M$ which fibers over $\R\P^2$ with at least $2$ exceptional fibers, i.e. $M=M(\R\P^2; \ \frac{q_1}{p_1}, \ldots , \frac{q_n}{p_n})$ with $n\geq 2$ and $2\leq p_1\leq \ldots \leq p_n.$ By \cite[Theorem 6.1]{JN83}, 
$$\pi_1(M)=\langle c_1,\ldots ,c_n,h,a \ \mid \ aha^{-1}=h^{-1},[h,c_i]=c_i^{p_i}h_{q_i}=1 \ \forall i, \ c_1\ldots c_n a^2=1 \rangle$$
Recall that $SU(2)$ can be thought as the unit sphere of the quaternions, with elements 
$$aI +b\begin{pmatrix}
i & 0 \\ 0 & -i
\end{pmatrix}+c\begin{pmatrix}
0 & -1 \\ 1 & 0
\end{pmatrix}+d\begin{pmatrix}
0 & i \\ i & 0
\end{pmatrix}$$
where $a^2+b^2+c^2+d^2=1$ and $I$ is the identity matrix. Unitary quaternions with coordinate $a=0$ along the identity matrix are called purely imaginary.
Following \cite[Proposition 3.4]{SZ}, we define a representation $\rho:\pi_1(M)\longrightarrow SU(2)$ by setting $\rho(h)=-I$ and 
$$\rho(c_k)=\cos\left(\frac{q_k}{p_k}\pi\right) I+ \sin\left(\frac{q_k}{p_k}\pi\right)v_k,$$
for $1\leq k \leq n$ and any purely imaginary unit quaternions $v_1,\ldots,v_n$.
Then taking $\rho(a)$ to be a square root of $\rho(c_1\ldots c_n)^{-1},$ one gets a representation of $\pi_1(M).$ (Note that square roots always exist in $SU(2).$)
	
Now, we notice that 
	$$\mathrm{Tr}(\rho(c_1c_2))=2\cos\left(\frac{q_1}{p_1}\pi\right)\cos\left(\frac{q_2}{p_2}\pi\right)+\sin\left(\frac{q_1}{p_1}\pi\right)\sin\left(\frac{q_2}{p_2}\pi\right)\mathrm{Tr}(v_1v_2).$$
	Since $\frac{q_1}{p_1}$ and $\frac{q_2}{p_2}$ are not integers, $\sin\left(\frac{q_1}{p_1}\pi\right)\sin\left(\frac{q_2}{p_2}\pi\right)\neq 0.$ Consequently,  for different choices of $v_1,v_2$, the trace $\mathrm{Tr}(\rho(c_1c_2))$ can take any value in the interval $[\alpha -\beta, \alpha +\beta]$, for
	$$\alpha:=2\cos\left(\frac{q_1}{p_1}\pi\right)\cos\left(\frac{q_2}{p_2}\pi\right)\ \text{and}\ \beta:=2\left|\sin\left(\frac{q_1}{p_1}\pi\right)\sin\left(\frac{q_2}{p_2}\pi\right)\right|.$$

Since  $\mathrm{Tr}(\rho(c_1c_2))$ can take infinitely many values for $SU(2)$-representations $\rho$ of $\pi_1(M)$, its character variety 
is infinite.
\end{proof} 

Next we prove Theorem \ref{thm:HakenSFScharac} whose statement we recall for the convenience of the reader:

\begin{named}{Theorem \ref{thm:HakenSFScharac}}
If $M$ is Seifert manifold, then $\dim X(M)>0$ if and only if $M$ is Haken or $M=S^2\times S^1$.
\end{named}

\begin{proof}[Proof of Theorem \ref{thm:HakenSFScharac}]
By the work of  Culler and Shallen  \cite{CullerShalen},  if $X(M)$ is infinite, then
$M$ contains an essential surfaces and, hence, it is either Haken or reducible. Since $S^2\times S^1$ and $\R\P^3\#\R\P^3$ are only reducible Seifert manifolds \cite[Lemma VI.7]{Ja} and $X(\R\P^3\#\R\P^3)$ is finite, the implication $\Rightarrow$ follows.

Proof of implication $\Leftarrow:$ Since $X(S^2\times S^1)$ is infinite, we can assume
that $M$ is Haken.
Let $(M, \pi, B)$ be a Seifert fibered space structure on $M$ where $B$ is the orbifold of the fibration
and $\pi: M\longrightarrow B$ is the canonical projection.
Recall that $B$ is a closed surface that may be orientable or non-orientable.
Since $\pi$ induces a surjection of $\pi_1(M)\to \pi_1(B)$, if $B\neq S^2, \R\P^2$, then 
 $H_1(M)$ projects onto $\Z$, implying an infinite $X(M)$ in this case.

Therefore, it is enough to assume that $B$ is either $S^2$ or $\R\P^2$.
In the first case, by Proposition \ref{Zetner}, it is enough to assume that $M$ has at most three exceptional fibers.
Such a manifold is Haken precisely when the fibration has exactly three exceptional fibers and $H_1(M)$ is infinite, see 
\cite[VI.15. Theorem]{Ja}. Thus $X(M)$ is infinite in this case.

If $B= \R\P^2$, then again by Proposition \ref{Zetner}, it is enough to assume that the fibration has
at most one exceptional fiber. Such a 3-manifold $M$ is one of the following \cite[page 97]{Ja}.

\begin{itemize}
\item $M=\R\P^3\#\R\P^3$ 
\item $M$ is a lens space
\item $M$ is a prism manifold, that is a manifold which fibers over $S^2$ with exactly three exceptional fibers of multiplicities $p_1=2,p_2=2,$ and $p_3>1$.
\end{itemize}

In all these cases either $M$ is non-Haken or the result follows from the previous case.
\end{proof}

%%%%%%%%%%%%%%%%%%%%%%%%%%%%%%%

\section{Non-Haken Seifert  fibered manifolds}\label{sec:non-Haken}
The purpose of this section is to prove Theorem \ref{main-i}, which we recall here: 

\begin{named}{Theorem \ref{main-i}}
For any closed Seifert fibered 3-manifold the following are equivalent:
\begin{enumerate} [(a)] 
\item $M$ contains no 2-sided essential surface (i.e. $M$ is irreducible, non-Haken). 
\item The skein module $\S(M,\Z[A^{\pm 1}])$ is finitely generated.
\item The skein module $\S(M,\Q[A^{\pm 1}])$ is tame.
\end{enumerate}
\end{named}

\begin{proof} (a) $\Rightarrow$ (b): If $M$ contains no 2-sided essential surface then it fibers over $S^2$ with at most three exceptional fibers; that is $M=M(S^2; \ \frac{q_1}{p_1},  \frac{q_2}{p_2}, \frac{q_3}{p_3})$. 
Furthermore, it has finite $H_1(M)$, which happens precisely when the  Euler number, 
$$e(M):= \frac{q_1}{p_1}+ \frac{q_2}{p_2}+ \frac{q_3}{p_3}$$
 is non-zero, \cite{EJa}.

Now the statement follows from Theorem \ref{SFSfingen} below.

The implication (b) $\Rightarrow$ (c) by definition of tameness.

(c) $\Rightarrow$ (a): By \cite[Theorem 1.1]{DKS}, we have $|X(M)| \leq \dim_{\Q(A)}  \S(M)$ and since by \cite{GJS19}, the dimension $\dim_{\Q(A)}  \S(M)$ is finite, we conclude that $X(M)$ is finite. Now by Theorem \ref{thm:HakenSFScharac},
$M$ has to be non-Haken, implying it is either reducible or it contains an incompressible surface.
\end{proof}

\begin{theorem}\label{SFSfingen}  For any Seifert 3-manifold $M$ that fibers over $S^2$ with at most three exceptional fibers and with $e(M)\neq 0$,
the skein module $\S(M,\Z[A^{\pm 1}])$ is finitely generated.
\end{theorem}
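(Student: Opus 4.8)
The plan is to exploit the Seifert structure directly and to reduce every framed link to a finite list of ``vertical'' and ``core'' configurations. First I would dispose of the degenerate cases: with fewer than three exceptional fibers $M$ is $S^3$ or a lens space, whose skein modules are known to be free of finite rank, so I may assume $M=M(S^2;\frac{q_1}{p_1},\frac{q_2}{p_2},\frac{q_3}{p_3})$ has exactly three exceptional fibers and $e(M)\neq 0$. Removing open fibered neighborhoods of the three exceptional fibers exhibits
$$M=(P\times S^1)\cup_{T_1\cup T_2\cup T_3}(V_1\cup V_2\cup V_3),$$
where $P$ is a pair of pants, and each $V_i$ is a solid torus glued along $T_i=\partial V_i$ so that its meridian is sent to a curve of slope $p_i\,(\text{section})+q_i\,(\text{fiber})$. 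The generic fiber $h$ is central, and $\S(V_i,\Z[A^{\pm 1}])=\Z[A^{\pm 1}][z_i]$ is the polynomial algebra on the core $z_i$.

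The second step is a normal-form reduction. Using the fibration together with the Kauffman relations, I would isotope an arbitrary framed link $L$ so that, inside each $V_i$, it becomes a union of core-parallel circles (contributing powers $z_i^{k_i}$) together with boundary-parallel arcs, and so that the part of $L$ lying in $P\times S^1$ is ``almost vertical''. Since the Kauffman bracket skein algebra of the pair of pants is the polynomial algebra on its three boundary curves, I expect $\S(P\times S^1,\Z[A^{\pm 1}])$ to be generated as a module by monomials in the three boundary-torus curves together with a bounded amount of vertical ($h$-parallel) winding: the point is that interior complexity in $P\times S^1$ can be traded for boundary curves and fiber windings by repeatedly resolving crossings and pushing strands across the product direction.

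The heart of the argument --- and the step I expect to be the \emph{main obstacle} --- is to show that only finitely many of these configurations survive, i.e. to bound simultaneously the core powers $z_i^{k_i}$ and the total winding around the fiber $h$. Here the fillings supply the needed relations: the meridian disk of each $V_i$ caps off the slope $p_i\,(\text{section})+q_i\,(\text{fiber})$, which converts high powers of $z_i$ into combinations of lower powers and fiber windings, exactly as the meridian disk of a solid torus reduces the core polynomial in a lens space. The condition $e(M)=\frac{q_1}{p_1}+\frac{q_2}{p_2}+\frac{q_3}{p_3}\neq 0$ is precisely what guarantees that these three capping relations, together with the pair-of-pants relation $c_1c_2c_3=1$ from the presentation \eqref{eq:presentation_pi1}, leave no unbounded horizontal or vertical direction; equivalently, the operator on $\S(M,\Z[A^{\pm 1}])$ given by adjoining a fiber-parallel curve satisfies an integral reduction relation, so the fiber winding is bounded. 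If $e(M)=0$ this fails, a horizontal essential surface appears, and the winding is genuinely unbounded, which is why the Euler number hypothesis cannot be dropped. Making this reduction terminate uniformly, and checking that it yields a genuinely finite generating set while handling the parity and torsion subtleties in the slopes $(p_i,q_i)$, is the technical crux.

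Finally, combining the normal-form reduction with the bounds on core powers and on fiber winding, every framed link in $M$ becomes a $\Z[A^{\pm 1}]$-linear combination of a fixed finite family of configurations, which proves that $\S(M,\Z[A^{\pm 1}])$ is finitely generated.
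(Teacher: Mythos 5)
Your decomposition is exactly the one the paper uses: $M$ is the Dehn filling of $N=S_{0,3}\times S^1$ along three slopes, and the meridian disks of the $V_i$ supply relations that should cut an infinite generating set down to a finite one. But what you have written is a plan, not a proof, and the gap sits precisely where you say the ``technical crux'' is: you never exhibit a mechanism that makes the reduction terminate. The filling relation coming from the meridian of $V_i$ (the paper's Eq.~\eqref{er1}) only moves a generator in the single direction $(p_i,q_i)$ inside the $i$-th boundary torus; by itself it confines generators of $\S(N,\Z[A^{\pm 1}])$ to affine subspaces of $\Z^6$ modulo the lattice spanned by the three filling slopes, and the quotient of $\Z^6$ by that lattice is still infinite. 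The entire content of the paper's proof is the construction of a well-ordered complexity $C(v)=(c(v),-c_1(v))$ on the Frohman--Gelca parameters $v\in\Z^6$ of boundary generators $\overline L_v$, together with two reduction moves (Propositions \ref{p.reduction1} and \ref{p.reduction2}) obtained by sliding the $S^1$-fiber to each boundary torus and resolving, via the product-to-sum formula \eqref{e.prod}. Proposition \ref{p.reduction2} is where $e(M)\neq 0$ actually enters: after normalizing $\ve_2,\ve_3\le 0$ (using that shifting the $\ve_i$ by integers summing to zero preserves $M$), the hypothesis $e(M)>0$ yields the explicit inequality $|\ve_1|+1>\max(|\ve_2-1|+|\ve_3|,\,|\ve_2|+|\ve_3+1|)$, which forces the complexity to strictly drop. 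Your heuristic (``$e(M)=0$ produces a horizontal surface, so winding is unbounded'') explains why the hypothesis cannot be dropped, but it gives no positive mechanism when $e(M)\neq 0$; the claim that ``adjoining a fiber-parallel curve satisfies an integral reduction relation'' is asserted, not derived, and is not how the finiteness is actually obtained.

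There is also a factual slip in your second step. The polynomial-algebra statement holds for the skein algebra of the pair of pants $P$ (i.e.\ of $P\times[0,1]$), but the relevant objects here are $\S(P\times S^1,\Z[A^{\pm 1}])$ and its module structure over the skein algebra of the three boundary \emph{tori}; the torus skein algebra is not polynomial on a boundary curve but has the Frohman--Gelca basis $(p,q)_T$ with the product-to-sum rule, and $\S(P\times S^1,\Z[A^{\pm 1}])$ is generated by the boundary classes $\overline L_v$, $v\in\Z^6$, with \emph{unbounded} fiber winding. So your intermediate claim that interior complexity can be traded for boundary curves ``together with a bounded amount of vertical winding'' is exactly what needs to be proved, and proving it requires the machinery above (or some substitute for it) rather than the structure of $\S(P\times[0,1])$.
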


The proof of Theorem \ref{SFSfingen} will occupy the remaining of the section.

%%%%%%%%%%%%%%%%%%%%%%%%%%%%%%%%%%%%%%%%%%%%%
\subsection{The torus skein algebra and the  Frohman-Gelca  basis}
 
Let $T$ be a torus with a choice of a basis of $H_1(T)$ consisting of simple closed curves $c,h$ oriented to have a single positive intersection.
Let $ \S(T,\Z[A^\pmo])$ denote the skein algebra of the torus $T$
with the  product given by stacking of skeins.
For coprime integers $p,q$, let $(p,q)_T$ denote the simple closed curve on $T$ of slope $p/q$; that is a simple closed curve on $T$ representing $p c+q h$ in $H_1(T)$. More generally, for $p,q\in \Z$, non both zero,  
we set 
$$(p,q)_T=T_d((p/d,q/d))\in \S(T,\Z[A^\pmo]),$$ where $d=gcd(p,q)$ and $T_d(X)$ is the $d$-th Chebyshev polynomial of the first kind.

We use  $(0,0)_T$ to be the empty multicurve, denoted by $\emptyset$.

Since multicurves on $T$ without contractible components form a basis of $\S(T,\Z[A^\pmo])$, the skeins $(p,q)_T$ for $\pm (p,q) \in \Z^2/_{\lbrace \pm 1 \rbrace}$ form a basis of $\S(T,\Z[A^\pmo]).$

We have the product-to-sum formula by \cite{FG00}:
\begin{equation}\label{e.prod}
(p,q)_T\cdot (r, s)_T=A^{ps-qr} (p+r, q+s)_T+ A^{qr-ps} (p-r, q-s)_T.
\end{equation}

%%%%%%%%%%%%%%%%%%%%%%%%%%%%%%%%%%%%%%%%%%%%%

\subsection{Realizing Seifert spaces by Dehn fillings on $S_{0,3}\times S^1$}

Let $S_{0,3}$ denote the 3-holed $S^2$, with the boundary components, $c_1,c_2,c_3,$ with their orientation induced by that of $S^2$. Fix a base point $b_i\in c_i,$ for $i=1,2,3$.
Let $N:=S_{0,3} \times S^1$ and
let $T_i=\{c_i\} \times S^1$ and $h_i=\{b_i\} \times S^1$.

Let
$$\ve_1:=q_1/p_1,\quad \ve_2:=q_2/p_2,\quad \ve_3:=q_3/p_3,$$
and let
$M(\ve_1, \ve_2, \ve_3)$ denote the Dehn filling of $N$ along the boundary slopes, $\ve_1, \ve_2, \ve_3$. That is 
$$M=M(S^2; \ \ve_1, \ve_2, \ve_3):= N\cup (V_1\cup V_2\cup V_3),$$ 
where $V_i$ is a solid torus attached to $T_i$ by identifying the meridian of $\partial V_i$ with $p_i c_i+q_i h_i$ in $H_1(T_i)$. The $S^1$-bundle structure of $N$ extents to the  Seifert fibration of $M(S^2; \ \ve_1, \ve_2, \ve_3)$ over $S^2$ with at most three exceptional fibers. The core of $V_i$ is an exceptional fiber iff $p_i>1$, and $p_i$ is the multiplicity of the exceptional fiber in that case.

From now on, let us denote $M(S^2; \ \ve_1, \ve_2, \ve_3)$ by $M(\ \ve_1, \ve_2, \ve_3)$ for simplicity.

Without loss of generality we will assume that $p_1,p_2, p_3\geq 1$ since any closed 3-dimensional Seifert fibered space with fiber space $S^2$ with at most three exceptional fibers (as in Theorem \ref{SFSfingen}) is obtained from $N$ in this way.
We will also assume that the Euler number $e(M)$ is positive, since the orientation reversal of $M$ negates its Euler number and does not affect the statement of Theorem  \ref{SFSfingen}.

Note that
$$\ve_1+\ve_2+\ve_3=e(M),$$ 
and shifting
$\ve_1,\ve_2,\ve_3$ by integers which add up to zero does not change the homeomorphism type of $M( \ve_1, \ve_2, \ve_3)$, cf.
 \cite[Theorem 1.5]{JN83}. Therefore, without loss of generality, we assume that 
\begin{equation}\label{e.ve23}
\ve_2,\ve_3 \leq 0.
\end{equation}
Note that since $e(M)>0,$ this implies that $\ve_1>0,$ and since all $p_i$ are positive, we obtain
\begin{equation}\label{e.q123}q_1>0 \ { \rm {and}} \  q_2,q_2\leq 0.
\end{equation}

%%%%%%%%%%%%%%%%%%%%%%%%%%%%%%%%%%%%%%%%%%%%%

\subsection{The skein module  $\S(S_{0,3}\times S^1, \Z[A^{\pm 1}]))$}
. 
The skein module $S(N, \Z[A^{\pm 1}])$
was studied in \cite{Mro-Dab}, where it was shown that it is a free $\Z[A^{\pm 1}]$-module.
 To prove this,
\cite{Mro-Dab} developed a diagrammatic presentation of framed links in products $S\times S^1$, over surfaces $S$. Links are isotoped
to be either  disjoint from $S\times \{1\}$  or to intersect $S\times \{1\}$  transversally and are studied via their projections (diagrams) on $S\times \{1\}$. In this setting,  link diagrams without arrows represent links in a tubular neighborhood of $S\times \{1\}$ in $S\times S^1$, and an arrow
marking on an arc $C$ of a diagram indicates that the arc makes a full loop in $S\times S^1$ in the direction of $S^1$. Changing the direction 
of an arrow on a diagram amounts to changing the direction in which the link runs along the $S^1$ direction.
They show that to study framed links via their diagrams,  besides the three usual  Reidemeister moves on ``unarrowed" parts of diagrams, one needs the following two moves:
\begin{center}
	\def\svgwidth{0.5 \columnwidth}
	%% Creator: Inkscape 0.48.2, www.inkscape.org
%% PDF/EPS/PS + LaTeX output extension by Johan Engelen, 2010
%% Accompanies image file '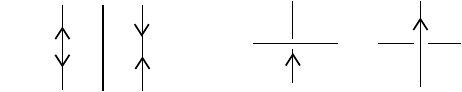' (pdf, eps, ps)
%%
%% To include the image in your LaTeX document, write
%%   \input{<filename>.pdf_tex}
%%  instead of
%%   \includegraphics{<filename>.pdf}
%% To scale the image, write
%%   \def\svgwidth{<desired width>}
%%   \input{<filename>.pdf_tex}
%%  instead of
%%   \includegraphics[width=<desired width>]{<filename>.pdf}
%%
%% Images with a different path to the parent latex file can
%% be accessed with the `import' package (which may need to be
%% installed) using
%%   \usepackage{import}
%% in the preamble, and then including the image with
%%   \import{<path to file>}{<filename>.pdf_tex}
%% Alternatively, one can specify
%%   \graphicspath{{<path to file>/}}
%% 
%% For more information, please see info/svg-inkscape on CTAN:
%%   http://tug.ctan.org/tex-archive/info/svg-inkscape
%%
\begingroup%
  \makeatletter%
  \providecommand\color[2][]{%
    \errmessage{(Inkscape) Color is used for the text in Inkscape, but the package 'color.sty' is not loaded}%
    \renewcommand\color[2][]{}%
  }%
  \providecommand\transparent[1]{%
    \errmessage{(Inkscape) Transparency is used (non-zero) for the text in Inkscape, but the package 'transparent.sty' is not loaded}%
    \renewcommand\transparent[1]{}%
  }%
  \providecommand\rotatebox[2]{#2}%
  \ifx\svgwidth\undefined%
    \setlength{\unitlength}{221.64836426bp}%
    \ifx\svgscale\undefined%
      \relax%
    \else%
      \setlength{\unitlength}{\unitlength * \real{\svgscale}}%
    \fi%
  \else%
    \setlength{\unitlength}{\svgwidth}%
  \fi%
  \global\let\svgwidth\undefined%
  \global\let\svgscale\undefined%
  \makeatother%
  \begin{picture}(1,0.1979595)%
    \put(0,0){\includegraphics[width=\unitlength]{Reidemeister.pdf}}%
    \put(0.1511898,0.08198808){\color[rgb]{0,0,0}\makebox(0,0)[lb]{\smash{$\sim$}}}%
    \put(0.23048952,0.08198808){\color[rgb]{0,0,0}\makebox(0,0)[lb]{\smash{$\sim$}}}%
    \put(-0.00102922,0.0792536){\color[rgb]{0,0,0}\makebox(0,0)[lb]{\smash{$(R_4)$}}}%
    \put(0.43557502,0.0792536){\color[rgb]{0,0,0}\makebox(0,0)[lb]{\smash{$(R_5)$}}}%
    \put(0.74001303,0.07469615){\color[rgb]{0,0,0}\makebox(0,0)[lb]{\smash{$\sim$}}}%
  \end{picture}%
\endgroup%

\end{center}

To simplify notation we will represent $S^1$-fibers by ``dots".
  Note that in the Mroczkowski-Dabkowski notation such dots are equal to
\\
\begin{center}
	\def\svgwidth{0.4 \columnwidth}
	%% Creator: Inkscape 1.3 (0e150ed6c4, 2023-07-21), www.inkscape.org
%% PDF/EPS/PS + LaTeX output extension by Johan Engelen, 2010
%% Accompanies image file '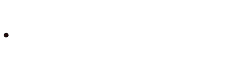' (pdf, eps, ps)
%%
%% To include the image in your LaTeX document, write
%%   \input{<filename>.pdf_tex}
%%  instead of
%%   \includegraphics{<filename>.pdf}
%% To scale the image, write
%%   \def\svgwidth{<desired width>}
%%   \input{<filename>.pdf_tex}
%%  instead of
%%   \includegraphics[width=<desired width>]{<filename>.pdf}
%%
%% Images with a different path to the parent latex file can
%% be accessed with the `import' package (which may need to be
%% installed) using
%%   \usepackage{import}
%% in the preamble, and then including the image with
%%   \import{<path to file>}{<filename>.pdf_tex}
%% Alternatively, one can specify
%%   \graphicspath{{<path to file>/}}
%% 
%% For more information, please see info/svg-inkscape on CTAN:
%%   http://tug.ctan.org/tex-archive/info/svg-inkscape
%%
\begingroup%
  \makeatletter%
  \providecommand\color[2][]{%
    \errmessage{(Inkscape) Color is used for the text in Inkscape, but the package 'color.sty' is not loaded}%
    \renewcommand\color[2][]{}%
  }%
  \providecommand\transparent[1]{%
    \errmessage{(Inkscape) Transparency is used (non-zero) for the text in Inkscape, but the package 'transparent.sty' is not loaded}%
    \renewcommand\transparent[1]{}%
  }%
  \providecommand\rotatebox[2]{#2}%
  \newcommand*\fsize{\dimexpr\f@size pt\relax}%
  \newcommand*\lineheight[1]{\fontsize{\fsize}{#1\fsize}\selectfont}%
  \ifx\svgwidth\undefined%
    \setlength{\unitlength}{110.95725352bp}%
    \ifx\svgscale\undefined%
      \relax%
    \else%
      \setlength{\unitlength}{\unitlength * \real{\svgscale}}%
    \fi%
  \else%
    \setlength{\unitlength}{\svgwidth}%
  \fi%
  \global\let\svgwidth\undefined%
  \global\let\svgscale\undefined%
  \makeatother%
  \begin{picture}(1,0.26880558)%
    \lineheight{1}%
    \setlength\tabcolsep{0pt}%
    \put(0,0){\includegraphics[width=\unitlength,page=1]{FiberNotation.pdf}}%
    \put(0.0835308,0.10736947){\color[rgb]{0.16862745,0,0}\makebox(0,0)[lt]{\lineheight{1.25}\smash{\begin{tabular}[t]{l}$=-A^3$\end{tabular}}}}%
    \put(0,0){\includegraphics[width=\unitlength,page=2]{FiberNotation.pdf}}%
    \put(0.55523564,0.1058672){\color[rgb]{0.16862745,0,0}\makebox(0,0)[lt]{\lineheight{1.25}\smash{\begin{tabular}[t]{l}$=-A^{-3}$\end{tabular}}}}%
    \put(0,0){\includegraphics[width=\unitlength,page=3]{FiberNotation.pdf}}%
  \end{picture}%
\endgroup%

\end{center}
where the two diagrams on the right are arrowed trivial curves on $S$.

In this paper, we will actually not use the result of \cite{Mro-Dab} that $S(N,\Z[A^{\pm 1}])$ is a free over $\Z[A^{\pm 1}]$. Instead, we will be interested in understanding its $S(\partial N,\Z[A^{\pm 1}])$-module structure.
Recall that the skein module $\S(N, \Z[A^{\pm 1}])$ has a natural left module structure over the skein algebra $\S(\partial N, \Z[A^{\pm 1}])$ of the boundary, induced by the homeomorphism $N \coprod_{\partial N} \partial N \times [0,1] \simeq N.$ 

Since the boundary components of $S_{0,3}$ are ordered, the above skein algebra is canonically isomorphic with $\S(T,\Z[A^\pmo]) \otimes \S(T,\Z[A^\pmo]) \otimes\S(T,\Z[A^\pmo]),$  which has a basis
$$\{ (k_1,l_1)_T\otimes (k_2,l_2)_T\otimes (k_3,l_3)_T:\quad   (k_1,l_1),(k_2,l_2),(k_3,l_3)\in \Z^2/\{\pm 1\}\}.$$
We will denote a basis element corresponding to $v=(k_1,l_1,k_2,l_2,k_3,l_3)\in {\Z}^6$ by
$L_v:=L_{k_1,l_1,k_2,l_2,k_3,l_3}$.
Given
$v$ we will denote the skein $L_v\cdot \emptyset\in \S(N, \Z[A^\pmo])$ by 
$\overline{L}_v=\overline{L}_{k_1,l_1,k_2,l_2,k_3,l_3}$, for simplicity.
 
Since all links in $N$ can be isotoped into a tubular neighborhood of $\p N$,  $\S(N, \Z[A^\pmo])$ 
is generated by the set
$$\{\overline{L}_{v} \ | \  v:=(k_1,l_1,k_2,l_2,k_3,l_3)\in {\Z}^6\}.$$

Given a submodule $V\subset \Z^n$ and $w\in \Z^n$, we call the set $w+V$ an \underline{affine subspace} of $\Z^n$ \underline{directed} by $V$.

\begin{lemma}
Suppose that $\S(N, \Z[A^\pmo]))$ is generated by the elements ${\overline L}_{v},$ 
for which $v$ belongs to a finite collection of affine subspaces of $\Z^6$ directed by 
$$V_{p_1,q_1,p_2,q_2,p_3,q_3}=Span_\Z((p_1,q_1,0,0,0,0), (0,0,p_2,q_2,0,0), (0,0,0,0,p_3,q_3)).$$ 
Then $\S(M(\ve_1, \ve_2, \ve_3), \Z[A^\pmo])$ is finitely generated.
\end{lemma}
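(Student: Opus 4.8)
The plan is to realize $\S(M(\ve_1,\ve_2,\ve_3),\Z[A^\pmo])$ as a quotient of $\S(N,\Z[A^\pmo])$ and then use the meridians of the three filling solid tori, together with the product-to-sum formula \eqref{e.prod}, to collapse each infinite affine subspace of generators to a finite subset. First I would note that the inclusion $N\hookrightarrow M$ induces a surjection $\S(N,\Z[A^\pmo])\twoheadrightarrow \S(M(\ve_1,\ve_2,\ve_3),\Z[A^\pmo])$: any framed link in a filling solid torus $V_i$ is isotopic into a collar of $T_i=\partial V_i$, hence into $N$, so every skein in $M$ comes from one in $N$. Consequently the images of the generators $\overline{L}_v$, for $v$ ranging over the given finite collection of affine subspaces, generate $\S(M(\ve_1,\ve_2,\ve_3),\Z[A^\pmo])$, and it remains only to reduce each (infinite) affine subspace to finitely many $v$.

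The key observation is that the lattice $V_{p_1,q_1,p_2,q_2,p_3,q_3}$ is spanned exactly by the meridian directions of the three fillings. For each $i$, set $\mu_i:=(p_i,q_i)_{T_i}$; this is the meridian of $V_i$, so it bounds a disk in $V_i\subset M$. Acting on $\S(M(\ve_1,\ve_2,\ve_3),\Z[A^\pmo])$ through the collar of $\partial N$, the curve $\mu_i$ can be isotoped into $V_i$, where relation K2 evaluates it to the scalar $-A^2-A^{-2}$; thus $\mu_i\cdot x=(-A^2-A^{-2})x$ for every $x$. On the other hand, applying \eqref{e.prod} in the $i$-th tensor factor of $\S(\partial N,\Z[A^\pmo])$ gives, with $e_i$ the $i$-th standard generator of $V_{p_1,q_1,p_2,q_2,p_3,q_3}$ and $(k_i,l_i)$ the $i$-th slope of $v$,
\[
(-A^2-A^{-2})\,\overline{L}_v \;=\; A^{\,p_il_i-q_ik_i}\,\overline{L}_{v+e_i}\;+\;A^{\,q_ik_i-p_il_i}\,\overline{L}_{v-e_i}.
\]
This is a three-term recurrence whose two extreme coefficients are units in $\Z[A^\pmo]$; moreover a direct check shows that the exponent $p_il_i-q_ik_i$ is unchanged when $v$ is translated by any element of $V_{p_1,q_1,p_2,q_2,p_3,q_3}$, so along each of the three lattice directions the recurrence has \emph{constant} coefficients.

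Because the extreme coefficients are units, the recurrence can be solved both forward and backward in the $e_i$-direction. Hence, fixing the coordinates in the other two directions, the $\Z[A^\pmo]$-span of $\{\overline{L}_{v+ae_i}\}_{a\in\Z}$ equals the span of any two consecutive terms, say those with $a\in\{0,1\}$. Applying this successively in the directions $e_1,e_2,e_3$ collapses each affine subspace onto the $2^3=8$ corner generators with exponents in $\{0,1\}^3$. As the collection of affine subspaces is finite, this exhibits a finite generating set for $\S(M(\ve_1,\ve_2,\ve_3),\Z[A^\pmo])$.

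I expect the main obstacle to be the careful justification of the meridian relation $\mu_i\cdot x=(-A^2-A^{-2})x$: one must verify that the module action of $\S(\partial N,\Z[A^\pmo])$ places $\mu_i$ on the boundary side where $V_i$ is glued, so that it is unobstructed by the remaining components of $\overline{L}_v$ and can genuinely be pushed across $T_i$ to bound a disk in $V_i$. Once this isotopy is secured, everything else is formal linear algebra over $\Z[A^\pmo]$ driven by \eqref{e.prod}, as above.
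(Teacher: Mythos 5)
Your proof is correct and follows essentially the same route as the paper's: the paper likewise combines the surjection $\S(N,\Z[A^{\pm 1}])\twoheadrightarrow \S(M(\ve_1,\ve_2,\ve_3),\Z[A^{\pm 1}])$, the fact that the slope-$(p_i,q_i)$ curve on $T_i$ bounds a disk in $M$ (giving the meridian relation), and the product-to-sum formula \eqref{e.prod} to produce the three-term recurrence with unit extreme coefficients, which collapses each affine subspace directed by $V_{p_1,q_1,p_2,q_2,p_3,q_3}$ to finitely many generators. The only discrepancy is that your exponents ($A^{p_il_i-q_ik_i}$ on the $v+e_i$ term rather than $A^{q_ik_i-p_il_i}$) are swapped relative to \eqref{e.prod}, but this is immaterial since the argument only uses that these coefficients are units, exactly as in the paper, where they are written as unspecified powers $A^{*}$.
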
 

\begin{proof} Since the curve of slope $\ve_i=p_i/q_i$ in $T_i$ bounds a disk in $M(\ve_1, \ve_2, \ve_3)$, 
we have

$$((k_1,l_1)_T\cdot  (p_1,q_1)_T, (k_2,l_2)_T, (k_3,l_3)_T)\cdot {\emptyset}=(-A^2-A^{-2}) ( (k_1,l_1)_T, (k_2,l_2)_T, (k_3,l_3)_T)\cdot {\emptyset},$$

or, equivalently,
\begin{equation} \label{er1}
A^{*}\cdot \overline L_{k_1+p_1, l_1+q_1, k_2, l_2, k_3, l_3} +A^{*}\cdot
\overline L_{k_1-p_1, l_1-q_1, k_2, l_2, k_3, l_3} = - (A^2+A^{-2})\cdot \overline L_{k_1, l_1, k_2, l_2, k_3, l_3},
\end{equation}
where $A^{*}$ denotes an unspecified integral power of $A$, by \eqref{e.prod}.

This relation together with the analogous relations corresponding to $T_2$ and $T_3,$ imply that for any 
$w\in \Z^6$ 
the $\Z[A^\pmo]$-submodule of $S(N, \Z[A^\pmo])$ generated by elements of $w+V_{p_1,q_1,p_2,q_2,p_3,q_3}$
is finitely generated. This implies the statement of the proposition.
\end{proof}

For $v:=(k_1,l_1,k_2,l_2,k_3,l_3)$, let 
$$w_i(v):=q_ik_i-p_il_i \ \text{and}\ c_i(v):=|w_i(v)|,$$ 
for $i=1,2,3.$
Let 
$$c(v):=c_1(v)/p_1+c_2(v)/p_2+c_3(v)/p_3$$ 
and let 
$$C(v):=(c(v),-c_1(v))\in \Z^2,$$ 
equipped with lexicographical order.
We call $C(v)$ the \underline{complexity} of $v$.

\begin{proposition}\label{p.reduction1}
For any $v=(k_1,l_1,k_2,l_2,k_3,l_3)$ such that  either $c_2(v)> p_2$ or $c_3(v)> p_3$, 
the element $\overline L_v$ is a linear combination of elements $\overline L_{v'}$, with $C(v')<C(v)$.
\end{proposition}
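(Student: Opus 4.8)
The plan is to introduce one family of relations in $\S(N,\Z[A^\pmo])$ beyond the product-to-sum formula \eqref{e.prod}, namely the relations expressing that all $S^1$-fibers of $N$ are mutually isotopic. Concretely, writing $e_1=(0,1,0,0,0,0)$ and $e_2=(0,0,0,1,0,0)$, I claim that for every $w=(k_1,l_1,k_2,l_2,k_3,l_3)\in\Z^6$ one has
\[
A^{-k_1}\overline L_{w+e_1}+A^{k_1}\overline L_{w-e_1}=A^{-k_2}\overline L_{w+e_2}+A^{k_2}\overline L_{w-e_2}.
\]
Both sides are the result of inserting a single fiber $(0,1)$ into $\overline L_w$, on the boundary torus $T_1$ on the left and on $T_2$ on the right, and then expanding by \eqref{e.prod}. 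These two skeins agree in $\S(N,\Z[A^\pmo])$ because $\overline L_w=L_w\cdot\emptyset$ is supported in a collar of $\partial N$, so an inserted fiber $\{b\}\times S^1$ may be isotoped, through the interior of $S_{0,3}$ and hence disjointly from $\overline L_w$, from a point $b$ near $c_2$ to a point near $c_1$. The identical statement holds with $T_3$ in place of $T_2$.

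I would then use this identity to reduce $\overline L_v$ whenever $c_2(v)>p_2$; the case $c_3(v)>p_3$ is handled verbatim with $T_3$ replacing $T_2$, the key being that in both cases winding is pushed \emph{into} $T_1$. Replacing $v$ by $-v$ if needed, assume $w_2(v)>0$, so $c_2(v)=w_2(v)>p_2$. Applying the fiber identity to $u:=v+e_2$ makes the term $\overline L_{u-e_2}=\overline L_v$ appear, and solving for it gives
\[
\overline L_v=A^{-k_2}\big(A^{-k_1}\overline L_{v+e_2+e_1}+A^{k_1}\overline L_{v+e_2-e_1}-A^{-k_2}\overline L_{v+2e_2}\big),
\]
a $\Z[A^\pmo]$-combination of $\overline L_{v+2e_2}$, $\overline L_{v+e_2+e_1}$ and $\overline L_{v+e_2-e_1}$.

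It then remains to check that each of these three vectors has complexity strictly below $C(v)$. Shifting $l_2$ by $2$ changes $w_2$ by $-2p_2$; since $w_2(v)>p_2$ this strictly lowers $c_2(v)/p_2$ while leaving $c_1,c_3$ fixed, so $C(v+2e_2)<C(v)$ already in the first coordinate $c$. For $v+e_2\pm e_1$ the second slot contributes $c_2(v)/p_2-1$ to $c$ (a drop of one unit), while the first slot changes $c_1$ by $\pm p_1$: if this lowers $c_1/p_1$ then $c$ drops by two units, and if it raises $c_1/p_1$ then $c$ is unchanged but $c_1$ strictly increases, so $-c_1$ strictly decreases and $C$ drops in its lexicographically second coordinate. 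In every case $C(v+e_2\pm e_1)<C(v)$. The subcase $w_2(v)<0$ is symmetric, applying the identity to $u=v-e_2$.

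The only genuinely geometric ingredient is the fiber-transfer identity; the rest is the lexicographic bookkeeping. I expect the main subtlety to be precisely the design of the complexity $C$: it is essential that the winding be moved into $T_1$ rather than, say, into $T_3$, since sending it to $T_3$ could leave $c$ unchanged while also leaving $c_1$ fixed, producing no decrease of $C$. Verifying that transfer into $T_1$ always strictly decreases $C$—using the secondary term $-c_1(v)$ exactly when the weighted total $c$ fails to drop—is the crux of the argument.
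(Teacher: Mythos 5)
Your proof is correct and follows essentially the same route as the paper: the paper derives the identical fiber-transfer relation (by isotoping an $S^1$-fiber, drawn as a dot, across the pair of pants between the boundary tori), shifts and solves for $\overline L_v$ to obtain exactly your three terms $v+2e_2$ and $v+e_2\pm e_1$, and then runs the same lexicographic complexity bookkeeping, including the same tiebreaker $-c_1$ when $c_1$ increases by exactly $p_1$. The only cosmetic difference is that the paper leaves the unit coefficients as unspecified powers $A^{*}$ (thereby sidestepping the stacking-convention question of which side of each curve $(k_i,l_i)_T$ the fiber sits on, the interior side being what your isotopy argument requires), whereas you commit to explicit exponents; since your reduction only uses that these coefficients are invertible, this does not affect correctness.
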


\begin{proof} 
Let us prove first that $\overline L_v,$ with $c_2(v) >p_2$, can be expressed as $\Z[A^\pmo]$-linear combination of $\overline L_{v'}$, with $v'$ of smaller complexity than $v$. 
The proof for $\overline L_v,$ with $w_3(v) >p_3$, is completely analogous.

Since $(-k,-l)_T=(k,l)_T,$ 
we can assume that $w_i(v)\geq 0$, for $i=1,2,3.$ 
In $\S(T,\Z[A^\pmo])$ we  have 
the following relation obtained by moving the $S^1$ fiber in $N$ so that it lies near each of the three boundary components
of $S_{0,3}$:
\vspace*{.035in} \\
\begin{center}
	\def\svgwidth{0.8 \columnwidth}
	%% Creator: Inkscape 1.3 (0e150ed6c4, 2023-07-21), www.inkscape.org
%% PDF/EPS/PS + LaTeX output extension by Johan Engelen, 2010
%% Accompanies image file '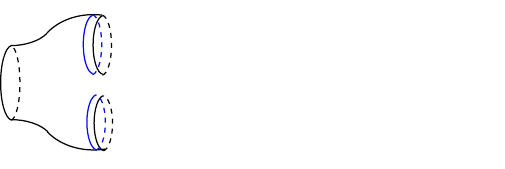' (pdf, eps, ps)
%%
%% To include the image in your LaTeX document, write
%%   \input{<filename>.pdf_tex}
%%  instead of
%%   \includegraphics{<filename>.pdf}
%% To scale the image, write
%%   \def\svgwidth{<desired width>}
%%   \input{<filename>.pdf_tex}
%%  instead of
%%   \includegraphics[width=<desired width>]{<filename>.pdf}
%%
%% Images with a different path to the parent latex file can
%% be accessed with the `import' package (which may need to be
%% installed) using
%%   \usepackage{import}
%% in the preamble, and then including the image with
%%   \import{<path to file>}{<filename>.pdf_tex}
%% Alternatively, one can specify
%%   \graphicspath{{<path to file>/}}
%% 
%% For more information, please see info/svg-inkscape on CTAN:
%%   http://tug.ctan.org/tex-archive/info/svg-inkscape
%%
\begingroup%
  \makeatletter%
  \providecommand\color[2][]{%
    \errmessage{(Inkscape) Color is used for the text in Inkscape, but the package 'color.sty' is not loaded}%
    \renewcommand\color[2][]{}%
  }%
  \providecommand\transparent[1]{%
    \errmessage{(Inkscape) Transparency is used (non-zero) for the text in Inkscape, but the package 'transparent.sty' is not loaded}%
    \renewcommand\transparent[1]{}%
  }%
  \providecommand\rotatebox[2]{#2}%
  \newcommand*\fsize{\dimexpr\f@size pt\relax}%
  \newcommand*\lineheight[1]{\fontsize{\fsize}{#1\fsize}\selectfont}%
  \ifx\svgwidth\undefined%
    \setlength{\unitlength}{242.06896925bp}%
    \ifx\svgscale\undefined%
      \relax%
    \else%
      \setlength{\unitlength}{\unitlength * \real{\svgscale}}%
    \fi%
  \else%
    \setlength{\unitlength}{\svgwidth}%
  \fi%
  \global\let\svgwidth\undefined%
  \global\let\svgscale\undefined%
  \makeatother%
  \begin{picture}(1,0.33482259)%
    \lineheight{1}%
    \setlength\tabcolsep{0pt}%
    \put(0,0){\includegraphics[width=\unitlength,page=1]{Relation1.pdf}}%
    \put(0.26077451,0.14291762){\color[rgb]{0,0,0}\makebox(0,0)[lt]{\lineheight{0}\smash{\begin{tabular}[t]{l}$=$\end{tabular}}}}%
    \put(0,0){\includegraphics[width=\unitlength,page=2]{Relation1.pdf}}%
    \put(-0.01376147,0.27911541){\color[rgb]{0,0,1}\makebox(0,0)[lt]{\lineheight{1.25}\smash{\begin{tabular}[t]{l}$(k_1,l_1)_T$\end{tabular}}}}%
    \put(0.16925388,0.31547317){\color[rgb]{0,0,1}\makebox(0,0)[lt]{\lineheight{1.25}\smash{\begin{tabular}[t]{l}$(k_2,l_2)_T$\end{tabular}}}}%
    \put(0.17729287,0.00436209){\color[rgb]{0,0,1}\makebox(0,0)[lt]{\lineheight{1.25}\smash{\begin{tabular}[t]{l}$(k_3,l_3)_T$\end{tabular}}}}%
    \put(0,0){\includegraphics[width=\unitlength,page=3]{Relation1.pdf}}%
    \put(0.32307152,0.26145859){\color[rgb]{0,0,1}\makebox(0,0)[lt]{\lineheight{1.25}\smash{\begin{tabular}[t]{l}$(k_1,l_1)_T$\end{tabular}}}}%
    \put(0.52655323,0.3133708){\color[rgb]{0,0,1}\makebox(0,0)[lt]{\lineheight{1.25}\smash{\begin{tabular}[t]{l}$(k_2,l_2)_T$\end{tabular}}}}%
    \put(0.53377352,0.00389707){\color[rgb]{0,0,1}\makebox(0,0)[lt]{\lineheight{1.25}\smash{\begin{tabular}[t]{l}$(k_3,l_3)_T$\end{tabular}}}}%
    \put(0,0){\includegraphics[width=\unitlength,page=4]{Relation1.pdf}}%
    \put(0.06813857,0.18908224){\color[rgb]{0,0,1}\makebox(0,0)[lt]{\lineheight{1.25}\smash{\begin{tabular}[t]{l}$(0,1)_T$\end{tabular}}}}%
    \put(0,0){\includegraphics[width=\unitlength,page=5]{Relation1.pdf}}%
    \put(0.43938401,0.19030723){\color[rgb]{0,0,1}\makebox(0,0)[lt]{\lineheight{1.25}\smash{\begin{tabular}[t]{l}$(0,1)_T$\end{tabular}}}}%
    \put(0,0){\includegraphics[width=\unitlength,page=6]{Relation1.pdf}}%
    \put(0.67622539,0.27526777){\color[rgb]{0,0,1}\makebox(0,0)[lt]{\lineheight{1.25}\smash{\begin{tabular}[t]{l}$(k_1,l_1)_T$\end{tabular}}}}%
    \put(0.87806979,0.32226807){\color[rgb]{0,0,1}\makebox(0,0)[lt]{\lineheight{1.25}\smash{\begin{tabular}[t]{l}$(k_2,l_2)_T$\end{tabular}}}}%
    \put(0.88365278,0.00542641){\color[rgb]{0,0,1}\makebox(0,0)[lt]{\lineheight{1.25}\smash{\begin{tabular}[t]{l}$(k_3,l_3)_T$\end{tabular}}}}%
    \put(0,0){\includegraphics[width=\unitlength,page=7]{Relation1.pdf}}%
    \put(0.79253797,0.09236999){\color[rgb]{0,0,1}\makebox(0,0)[lt]{\lineheight{1.25}\smash{\begin{tabular}[t]{l}$(0,1)_T$\end{tabular}}}}%
    \put(0.60874755,0.14790847){\color[rgb]{0,0,0}\makebox(0,0)[lt]{\lineheight{0}\smash{\begin{tabular}[t]{l}$=$\end{tabular}}}}%
  \end{picture}%
\endgroup%

\end{center}
\vspace*{.02in} 
The blue loops are in $S_{0,3}\times \{1\}$ and the dots represent circle fibers.

By the first equality of this relation and \eqref{e.prod}, we get
$$A^{*}\cdot \overline L_{k_1, l_1, k_2, l_2-1, k_3, l_3} + A^{*}\cdot
\overline L_{k_1, l_1, k_2, l_2+1, k_3, l_3} =A^{*}\cdot
\overline L_{k_1, l_1+1, k_2, l_2, k_3, l_3} +
A^{*}\cdot \overline L_{k_1 l_1-1, k_2, l_2, k_3, l_3}.$$
Let $v=(k_1, l_1, k_2, l_2, k_3, l_3)$ and apply above relation to $\overline L_v$.
 By moving the second term to the right and shifting $l_2$ by $1$ we obtain
$$\overline L_v= A^*\cdot \overline L_{k_1, l_1+1, k_2, l_2+1, k_3, l_3}+A^*\cdot
\overline L_{k_1, l_1-1, k_2, l_2+1, k_3, l_3}-A^*\cdot\overline L_{k_1, l_1, k_2, l_2+2, k_3, l_3}.$$

Recall that $p_1, p_2 \geq 1$. Since $w_1,w_2\geq 0$,
$$c(k_1, l_1\pm 1, k_2, l_2+1, k_3, l_3)\leq c(k_1, l_1, k_2, l_2, k_3, l_3),$$
because going from the right to the left side decreases $c_2$ by $p_2$, while it increases $c_1$ by at most $p_1$.

Consequently, the $c$-value, 
$$c(v)=\frac{c_1(v)}{p_1}+ \frac{c_2(v)}{p_2}+ \frac{c_3(v)}{p_3}$$
does not increase.  If the above inequality is actually an equality,  then by the above reasoning 
$$c_1(k_1, l_1\pm 1, k_2, l_2+1, k_3, l_3)> c_1(k_1, l_1, k_2, l_2, k_3, l_3),$$
and, hence, 
$$C(k_1, l_1\pm 1, k_2, l_2+1, k_3, l_3)< C(k_1, l_1, k_2, l_2, k_3, l_3).$$
Finally,
$$C(k_1, l_1, k_2, l_2+2, k_3, l_3)< C(k_1, l_1, k_2, l_2, k_3, l_3),$$
because $c_2(v)=w_2(v)=k_2q_2-l_2p_2>p_2 > 0$ (by the assumption of the proposition) implies $|k_2q_2-(l_2+2)p_2|<|k_2q_2-l_2p_2|$.
\end{proof}

The next ingredient we need for  the proof of Theorem \ref{SFSfingen} is the following:

\begin{proposition}\label{p.reduction2}
Assume that $|\ve_1|+1> \max(|\ve_2-1|+|\ve_3|, |\ve_2|+|\ve_3+1|).$
Then  for any $v$ with $c_1(v)\geq 2|q_1|+2p_1$, $\overline L_v$ can be expressed as a linear combination of elements 
$\overline L_{v'}$, with $C(v')<C(v)$.
\end{proposition}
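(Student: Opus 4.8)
The governing difficulty here is that the ordering on the complexity $C=(c,-c_1)$ works against us. Lowering $c_1$ raises the second coordinate $-c_1$, so a reduction can only decrease $C$ if the first coordinate $c=c_1/p_1+c_2/p_2+c_3/p_3$ strictly drops. The fiber relation used in Proposition \ref{p.reduction1} trades a unit shift of $l_1$ (lowering $c_1$ by $p_1$, i.e.\ $c_1/p_1$ by $1$) against a unit shift of $l_2$ or $l_3$ (raising $c_2/p_2$ or $c_3/p_3$ by at most $1$), so it only yields $c'\le c$, never strict. Overcoming this non-strictness is the crux, and it is precisely what the hypothesis on $\ve_1,\ve_2,\ve_3$ is tailored to.

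My plan is to produce a more efficient relation by sliding one strand of $(1,0)_{T_1}$, together with a single fiber push on $T_1$, across the pair of pants $S_{0,3}\times\{1\}$, where the strand is absorbed into $T_2$ and $T_3$. This is in the same spirit as the fiber relation of Proposition \ref{p.reduction1}, but now the slid object meets both other boundary tori, so resolving the resulting crossing by the Kauffman relation K1 produces two terms, and these are exactly the two branches of the maximum in the hypothesis. Tracking the effect on $w_i(v)=q_ik_i-p_il_i$: the slide-plus-push moves $(k_1,l_1)\mapsto(k_1-1,l_1+1)$, so $w_1$ drops by $q_1+p_1$ (using $q_1>0$), while on $T_2,T_3$ one branch shifts $(k_2,l_2)\mapsto(k_2-1,l_2-1)$ and $k_3\mapsto k_3-1$, and the other shifts $k_2\mapsto k_2-1$ and $(k_3,l_3)\mapsto(k_3-1,l_3+1)$.

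Solving this relation for $\overline L_v$, exactly as in the proof of Proposition \ref{p.reduction1}, I expect to express $\overline L_v$ as an $A^{*}$-linear combination of the two single-slide terms just described together with one double-slide term, in which $(k_1,l_1)\mapsto(k_1-2,l_1+2)$ and the $T_2,T_3$ coordinates are untouched. For the double-slide term $w_1$ drops by $2(q_1+p_1)$ and $c_2,c_3$ are unchanged, so $c$ strictly decreases; this is where the bound $c_1(v)\ge 2|q_1|+2p_1=2(q_1+p_1)$ enters, guaranteeing $w_1\ge 2(q_1+p_1)$ so that $w_1$ keeps its sign and $c_1=|w_1|$ genuinely falls by $2(q_1+p_1)$ rather than folding back up in absolute value. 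For the two single-slide terms the same sign control gives that $c_1/p_1$ falls by exactly $(q_1+p_1)/p_1=|\ve_1|+1$, while the triangle inequality bounds the rise of $c_2/p_2+c_3/p_3$ by $|\ve_2-1|+|\ve_3|$ in the first branch and by $|\ve_2|+|\ve_3+1|$ in the second (using $q_2,q_3\le 0$). The hypothesis $|\ve_1|+1>\max(|\ve_2-1|+|\ve_3|,|\ve_2|+|\ve_3+1|)$ then makes $c$ strictly decrease in both cases, so $C(v')<C(v)$ for all three terms.

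The step I expect to be most delicate is the first one: writing down the sliding relation precisely in $\S(N,\Z[A^\pmo])$ — in particular fixing the unspecified powers $A^{*}$ and verifying that the two Kauffman resolutions of the slide produce exactly the two coordinate shifts above, so that their complexity costs are the two entries of the maximum. Once that relation is in hand, the remainder is the sign-and-triangle-inequality bookkeeping sketched above, entirely parallel to Proposition \ref{p.reduction1}.
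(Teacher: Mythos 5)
Your strategy is the same as the paper's: peel a $(1,-1)$-type curve (carrying the fiber) off the $T_1$-multicurve via product-to-sum, slide it across the pair of pants using the arrowed move $R_5$, resolve, and let the hypothesis absorb exactly the two branch costs $|\ve_2-1|+|\ve_3|$ and $|\ve_2|+|\ve_3+1|$; your complexity estimates for the terms you list are precisely the paper's. The genuine gap is the form of the skein identity you rely on. Once the slid curve lies over $T_2$ and $T_3$, it must be merged with the multicurves $(k_2,l_2)_{T_2}$ and $(k_3,l_3)_{T_3}$ that are already present, and by the product-to-sum formula \eqref{e.prod} each merge produces \emph{both} signs, e.g.\ $(k_2,l_2)_T\cdot(1,1)_T=A^{*}(k_2+1,l_2+1)_T+A^{*}(k_2-1,l_2-1)_T$, and likewise on $T_3$. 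So each of your two branches is a sum of \emph{four} $\overline{L}$-terms (all sign combinations), never the single term you wrote down; whenever $(k_2,l_2)$ and $(k_3,l_3)$ are nontrivial, a one-term merge contradicts \eqref{e.prod}. Moreover, the slide is not an isotopy of unarrowed diagrams: rewriting the $R_5$-identity in resolved form leaves two further terms in which the extra curve stays on the $T_1$ side with the opposite fiber twist (a $(1,1)_{T_1}$-curve rather than $(1,-1)_{T_1}$); after product-to-sum these contribute terms whose $T_1$-coordinates are shifted by $(0,+2)$ and $(-2,0)$ relative to $v$, with $T_2,T_3$ untouched. The correct relation, which is what the paper derives, has eleven terms on the right-hand side, not three.

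The good news is that your own estimates repair the gap without new ideas. The extra sign combinations in each branch obey the same triangle-inequality bounds (the increase of $c_2/p_2+c_3/p_3$ is still at most $|\ve_2-1|+|\ve_3|$, respectively $|\ve_2|+|\ve_3+1|$, against the guaranteed drop of $1+|\ve_1|$ in $c_1/p_1$), and the two extra $T_1$-terms decrease $w_1$ by exactly $2p_1$ and $2q_1$ while fixing $c_2,c_3$; since $w_1(v)\geq 2q_1+2p_1$ and $q_1>0$ by \eqref{e.q123}, the sign of $w_1$ is preserved, so $c_1$ genuinely drops and these terms also have smaller complexity. With the corrected eleven-term identity your bookkeeping goes through verbatim and reproduces the paper's proof; as written, however, the central three-term relation your argument is built on does not hold.
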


\begin{proof}
Recall that without loss of generality, we assumed $q_1\geq 0$ (see Equation \ref{e.q123}). Let $v\in \Z^6$ be such that 
\begin{equation}\label{e.q1p1}
c_1(v)\geq 2q_1+2p_1.
\end{equation}
As in the proof of Proposition \ref{p.reduction1}, we will assume that $w_i(v)\geq 0$ for $i=1,2,3,$ without loss of generality. 
We have the following identity of skein elements in $N,$ using the relation $R_5$ introduced above:
\begin{center}
	\def\svgwidth{0.6 \columnwidth}
	%% Creator: Inkscape 1.3 (0e150ed6c4, 2023-07-21), www.inkscape.org
%% PDF/EPS/PS + LaTeX output extension by Johan Engelen, 2010
%% Accompanies image file '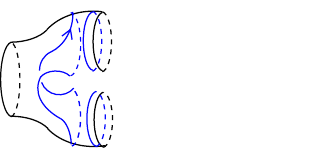' (pdf, eps, ps)
%%
%% To include the image in your LaTeX document, write
%%   \input{<filename>.pdf_tex}
%%  instead of
%%   \includegraphics{<filename>.pdf}
%% To scale the image, write
%%   \def\svgwidth{<desired width>}
%%   \input{<filename>.pdf_tex}
%%  instead of
%%   \includegraphics[width=<desired width>]{<filename>.pdf}
%%
%% Images with a different path to the parent latex file can
%% be accessed with the `import' package (which may need to be
%% installed) using
%%   \usepackage{import}
%% in the preamble, and then including the image with
%%   \import{<path to file>}{<filename>.pdf_tex}
%% Alternatively, one can specify
%%   \graphicspath{{<path to file>/}}
%% 
%% For more information, please see info/svg-inkscape on CTAN:
%%   http://tug.ctan.org/tex-archive/info/svg-inkscape
%%
\begingroup%
  \makeatletter%
  \providecommand\color[2][]{%
    \errmessage{(Inkscape) Color is used for the text in Inkscape, but the package 'color.sty' is not loaded}%
    \renewcommand\color[2][]{}%
  }%
  \providecommand\transparent[1]{%
    \errmessage{(Inkscape) Transparency is used (non-zero) for the text in Inkscape, but the package 'transparent.sty' is not loaded}%
    \renewcommand\transparent[1]{}%
  }%
  \providecommand\rotatebox[2]{#2}%
  \newcommand*\fsize{\dimexpr\f@size pt\relax}%
  \newcommand*\lineheight[1]{\fontsize{\fsize}{#1\fsize}\selectfont}%
  \ifx\svgwidth\undefined%
    \setlength{\unitlength}{155.314991bp}%
    \ifx\svgscale\undefined%
      \relax%
    \else%
      \setlength{\unitlength}{\unitlength * \real{\svgscale}}%
    \fi%
  \else%
    \setlength{\unitlength}{\svgwidth}%
  \fi%
  \global\let\svgwidth\undefined%
  \global\let\svgscale\undefined%
  \makeatother%
  \begin{picture}(1,0.50232194)%
    \lineheight{1}%
    \setlength\tabcolsep{0pt}%
    \put(0,0){\includegraphics[width=\unitlength,page=1]{Relation2_proof.pdf}}%
    \put(0.4063941,0.21381531){\color[rgb]{0,0,0}\makebox(0,0)[lt]{\lineheight{0}\smash{\begin{tabular}[t]{l}$=$\end{tabular}}}}%
    \put(0,0){\includegraphics[width=\unitlength,page=2]{Relation2_proof.pdf}}%
    \put(0.0008189,0.42299003){\color[rgb]{0,0,1}\makebox(0,0)[lt]{\lineheight{1.25}\smash{\begin{tabular}[t]{l}$(k_1,l_1)_T$\end{tabular}}}}%
    \put(0.26375295,0.48275484){\color[rgb]{0,0,1}\makebox(0,0)[lt]{\lineheight{1.25}\smash{\begin{tabular}[t]{l}$(k_2,l_2)_T$\end{tabular}}}}%
    \put(0.27500635,0.0093505){\color[rgb]{0,0,1}\makebox(0,0)[lt]{\lineheight{1.25}\smash{\begin{tabular}[t]{l}$(k_3,l_3)_T$\end{tabular}}}}%
    \put(0,0){\includegraphics[width=\unitlength,page=3]{Relation2_proof.pdf}}%
    \put(0.49733299,0.40591654){\color[rgb]{0,0,1}\makebox(0,0)[lt]{\lineheight{1.25}\smash{\begin{tabular}[t]{l}$(k_1,l_1)_T$\end{tabular}}}}%
    \put(0.82062825,0.47947815){\color[rgb]{0,0,1}\makebox(0,0)[lt]{\lineheight{1.25}\smash{\begin{tabular}[t]{l}$(k_2,l_2)_T$\end{tabular}}}}%
    \put(0.83188158,0.00607381){\color[rgb]{0,0,1}\makebox(0,0)[lt]{\lineheight{1.25}\smash{\begin{tabular}[t]{l}$(k_3,l_3)_T$\end{tabular}}}}%
  \end{picture}%
\endgroup%

\end{center}
and, by resolving the crossings, we obtain
\vskip 0.03in
\begin{center}
	\def\svgwidth{1 \columnwidth}
	%% Creator: Inkscape 1.3 (0e150ed6c4, 2023-07-21), www.inkscape.org
%% PDF/EPS/PS + LaTeX output extension by Johan Engelen, 2010
%% Accompanies image file '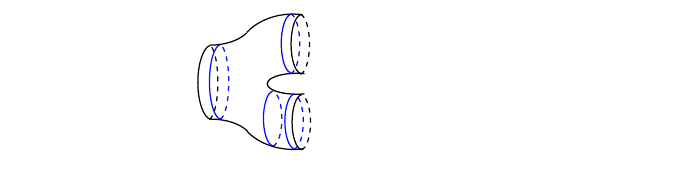' (pdf, eps, ps)
%%
%% To include the image in your LaTeX document, write
%%   \input{<filename>.pdf_tex}
%%  instead of
%%   \includegraphics{<filename>.pdf}
%% To scale the image, write
%%   \def\svgwidth{<desired width>}
%%   \input{<filename>.pdf_tex}
%%  instead of
%%   \includegraphics[width=<desired width>]{<filename>.pdf}
%%
%% Images with a different path to the parent latex file can
%% be accessed with the `import' package (which may need to be
%% installed) using
%%   \usepackage{import}
%% in the preamble, and then including the image with
%%   \import{<path to file>}{<filename>.pdf_tex}
%% Alternatively, one can specify
%%   \graphicspath{{<path to file>/}}
%% 
%% For more information, please see info/svg-inkscape on CTAN:
%%   http://tug.ctan.org/tex-archive/info/svg-inkscape
%%
\begingroup%
  \makeatletter%
  \providecommand\color[2][]{%
    \errmessage{(Inkscape) Color is used for the text in Inkscape, but the package 'color.sty' is not loaded}%
    \renewcommand\color[2][]{}%
  }%
  \providecommand\transparent[1]{%
    \errmessage{(Inkscape) Transparency is used (non-zero) for the text in Inkscape, but the package 'transparent.sty' is not loaded}%
    \renewcommand\transparent[1]{}%
  }%
  \providecommand\rotatebox[2]{#2}%
  \newcommand*\fsize{\dimexpr\f@size pt\relax}%
  \newcommand*\lineheight[1]{\fontsize{\fsize}{#1\fsize}\selectfont}%
  \ifx\svgwidth\undefined%
    \setlength{\unitlength}{328.67505892bp}%
    \ifx\svgscale\undefined%
      \relax%
    \else%
      \setlength{\unitlength}{\unitlength * \real{\svgscale}}%
    \fi%
  \else%
    \setlength{\unitlength}{\svgwidth}%
  \fi%
  \global\let\svgwidth\undefined%
  \global\let\svgscale\undefined%
  \makeatother%
  \begin{picture}(1,0.24545472)%
    \lineheight{1}%
    \setlength\tabcolsep{0pt}%
    \put(0.47638891,0.11563234){\color[rgb]{0,0,0}\makebox(0,0)[lt]{\lineheight{0}\smash{\begin{tabular}[t]{l}$=A$\end{tabular}}}}%
    \put(0,0){\includegraphics[width=\unitlength,page=1]{Relation2.pdf}}%
    \put(0.24641706,0.1997338){\color[rgb]{0,0,1}\makebox(0,0)[lt]{\lineheight{1.25}\smash{\begin{tabular}[t]{l}$(k_1,l_1)_T$\end{tabular}}}}%
    \put(0.42944283,0.23254067){\color[rgb]{0,0,1}\makebox(0,0)[lt]{\lineheight{1.25}\smash{\begin{tabular}[t]{l}$(k_2,l_2)_T$\end{tabular}}}}%
    \put(0.43656947,0.00943694){\color[rgb]{0,0,1}\makebox(0,0)[lt]{\lineheight{1.25}\smash{\begin{tabular}[t]{l}$(k_3,l_3)_T$\end{tabular}}}}%
    \put(0,0){\includegraphics[width=\unitlength,page=2]{Relation2.pdf}}%
    \put(-0.00016644,0.19872473){\color[rgb]{0,0,1}\makebox(0,0)[lt]{\lineheight{1.25}\smash{\begin{tabular}[t]{l}$(k_1,l_1)_T$\end{tabular}}}}%
    \put(0.15452125,0.23454628){\color[rgb]{0,0,1}\makebox(0,0)[lt]{\lineheight{1.25}\smash{\begin{tabular}[t]{l}$(k_2,l_2)_T$\end{tabular}}}}%
    \put(0.15923609,0.00903082){\color[rgb]{0,0,1}\makebox(0,0)[lt]{\lineheight{1.25}\smash{\begin{tabular}[t]{l}$(k_3,l_3)_T$\end{tabular}}}}%
    \put(0,0){\includegraphics[width=\unitlength,page=3]{Relation2.pdf}}%
    \put(0.05892747,0.02042867){\color[rgb]{0,0,1}\makebox(0,0)[lt]{\lineheight{1.25}\smash{\begin{tabular}[t]{l}$(1,-1)_T$\end{tabular}}}}%
    \put(-0.00101021,0.1301376){\color[rgb]{0.15686275,0.04313725,0.04313725}\makebox(0,0)[lt]{\lineheight{1.25}\smash{\begin{tabular}[t]{l}$A$\end{tabular}}}}%
    \put(0.19894438,0.1271532){\color[rgb]{0.15686275,0.04313725,0.04313725}\makebox(0,0)[lt]{\lineheight{1.25}\smash{\begin{tabular}[t]{l}$+A^{-1}$\end{tabular}}}}%
    \put(0,0){\includegraphics[width=\unitlength,page=4]{Relation2.pdf}}%
    \put(0.75704998,0.1961817){\color[rgb]{0,0,1}\makebox(0,0)[lt]{\lineheight{1.25}\smash{\begin{tabular}[t]{l}$(k_1,l_1)_T$\end{tabular}}}}%
    \put(0.91897296,0.23320914){\color[rgb]{0,0,1}\makebox(0,0)[lt]{\lineheight{1.25}\smash{\begin{tabular}[t]{l}$(k_2,l_2)_T$\end{tabular}}}}%
    \put(0.92368788,0.00889962){\color[rgb]{0,0,1}\makebox(0,0)[lt]{\lineheight{1.25}\smash{\begin{tabular}[t]{l}$(k_3,l_3)_T$\end{tabular}}}}%
    \put(0,0){\includegraphics[width=\unitlength,page=5]{Relation2.pdf}}%
    \put(0.82433536,0.02062027){\color[rgb]{0,0,1}\makebox(0,0)[lt]{\lineheight{1.25}\smash{\begin{tabular}[t]{l}$(1,1)_T$\end{tabular}}}}%
    \put(0,0){\includegraphics[width=\unitlength,page=6]{Relation2.pdf}}%
    \put(0.34836106,0.23057784){\color[rgb]{0,0,1}\makebox(0,0)[lt]{\lineheight{1.25}\smash{\begin{tabular}[t]{l}$(1,1)_T$\end{tabular}}}}%
    \put(0.34865377,0.00980561){\color[rgb]{0,0,1}\makebox(0,0)[lt]{\lineheight{1.25}\smash{\begin{tabular}[t]{l}$(1,0)_T$\end{tabular}}}}%
    \put(0.70299194,0.11918647){\color[rgb]{0.15686275,0.04313725,0.04313725}\makebox(0,0)[lt]{\lineheight{1.25}\smash{\begin{tabular}[t]{l}$+A^{-1}$\end{tabular}}}}%
    \put(0,0){\includegraphics[width=\unitlength,page=7]{Relation2.pdf}}%
    \put(0.50008069,0.20038676){\color[rgb]{0,0,1}\makebox(0,0)[lt]{\lineheight{1.25}\smash{\begin{tabular}[t]{l}$(k_1,l_1)_T$\end{tabular}}}}%
    \put(0.66682699,0.23439952){\color[rgb]{0,0,1}\makebox(0,0)[lt]{\lineheight{1.25}\smash{\begin{tabular}[t]{l}$(k_2,l_2)_T$\end{tabular}}}}%
    \put(0.67877709,0.01310463){\color[rgb]{0,0,1}\makebox(0,0)[lt]{\lineheight{1.25}\smash{\begin{tabular}[t]{l}$(k_3,l_3)_T$\end{tabular}}}}%
    \put(0,0){\includegraphics[width=\unitlength,page=8]{Relation2.pdf}}%
    \put(0.58092176,0.23062788){\color[rgb]{0,0,1}\makebox(0,0)[lt]{\lineheight{1.25}\smash{\begin{tabular}[t]{l}$(1,0)_T$\end{tabular}}}}%
    \put(0.58302327,0.00623803){\color[rgb]{0,0,1}\makebox(0,0)[lt]{\lineheight{1.25}\smash{\begin{tabular}[t]{l}$(1,-1)_T$\end{tabular}}}}%
  \end{picture}%
\endgroup%

\end{center}

\vskip 0.02in

By applying the product-to-sum formula \eqref{e.prod} we can turn each of the diagrams above into a $A^*$-weighted sum of
$\overline L$-terms. In particular, the first diagram on the left equals $A^* \overline L_{v_1}+A^* \overline L_{v_2},$
where 
$$v_1:= (k_1+1, l_1-1, k_2, l_2, k_3, l_3) \ \text{and}\  v_2:= (k_1-1, l_1+1, k_2, l_2, k_3, l_3),$$
and the second diagram on the left equals

$$A^* \overline L_{v_3}+A^* \overline L_{v_4}+A^* \overline L_{v_5}+A^* \overline L_{v_6},$$
where 
$$v_3:= (k_1, l_1, k_2+1, l_2+1, k_3+1, l_3),\quad v_4:= (k_1, l_1, k_2-1, l_2-1, k_3+1, l_3),$$
and 
$$v_5:= (k_1, l_1, k_2+1, l_2+1, k_3-1, l_3),\quad v_6:= (k_1, l_1, k_2-1, l_2-1, k_3-1, l_3).$$
The $\overline L$-terms on the right are:
$$v_7:=(k_1, l_1, k_2+1, l_2, k_3+1, l_3-1),\quad v_8:=(k_1, l_1, k_2-1, l_2, k_3+1, l_3-1),$$ 
$$v_9:=(k_1, l_1, k_2+1, l_2, k_3-1, l_3+1),\quad v_{10}:=(k_1, l_1, k_2-1, l_2, k_3-1, l_3+1),$$
$$v_{11}:=(k_1+1, l_1+1, k_2, l_2, k_3, l_3),\quad v_{12}:=(k_1-1, l_1-1, k_2, l_2, k_3, l_3).$$

We are going to complete the proof of the proposition
by showing that  $C(v_i)< C(v_1)$, for $2\leq i\leq 12$.
As in the proof of Proposition \ref{p.reduction1}, we assume that $w_1(v_1), w_2(v_1), w_3(v_1)\geq 0$.

Let us analyze vectors $v_2, v_{11},v_{12}$ since they differ from $v_1$ at their first two entries only.
Note that going from $v_1$ to $v_2$ decreases $w_1$ by at most $2q_1+2p_1$ and, consequently, it
decreases the value of $c_1$, by Eq. \eqref{e.q1p1}. Since the values of $c_2$ and $c_3$ remain unchanged,
$C(v_2)<C(v_1).$ 

Going from $v_1$ to $v_{11}$ decreases $w_1$ by at most $2p_1$ and, hence, $C(v_{11})<C(v_1),$ as in the argument above.
Going from $v_1$ to $v_{12}$ decreases $w_1$ by at most $2q_1$ and, hence, $C(v_{11})<C(v_1),$ as well.

Note that the remaining vectors, $v_i$, for $i=3,...,10$, have their first two components $(k_1,l_1).$
Hence, going from $v_1$ to any of them decreases $c_1/p_1$ by $(p_1+q_1)/p_1=1+\ve_1.$
Therefore, by \eqref{e.q1p1}, it is enough to show that going from $v_1$ to one of these vectors increases
$c_2/p_2+c_3/p_3$ by at most $\max(|\ve_2-1|+|\ve_3|, |\ve_2|+|\ve_3+1|)$.

Note that going from $v_1$ to $v_i$ for $i=3,4,5,6,$ increases $c_2/p_2$ by at most $|q_2-p_2|/p_2=|\ve_2-1|$ and it increases $c_3/p_3$ by at most $|q_3|/p_3$. Hence, the above condition holds.

Going $v_1$ to $v_i$ for $i=7,8,9,10,$ increases $c_2/p_2$ by at most $|q_2|/p_2=|\ve_2|$ and it
 it increases $c_3/p_3$ by at most $|q_3+p_3|/p_3=|\ve_3+1|$. Hence, the above condition holds as well.
 \end{proof}
Now we complete the proof of Theorem \ref{SFSfingen} as a corollary of Propositions \ref{p.reduction1} and \ref{p.reduction2}.
\begin{corollary}
If $e(M)=\ve_1+\ve_2+\ve_3\ne 0$, then $\S(M(\ve_1,\ve_2,\ve_3))$ is a finitely generated $\Z[A^\pmo]$-module.
\end{corollary}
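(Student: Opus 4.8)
The plan is to apply the (unnumbered) reduction Lemma above, which says that $\S(M(\ve_1,\ve_2,\ve_3))$ is finitely generated as soon as $\S(N,\Z[A^\pmo])$ is generated by the skeins $\overline{L}_v$ with $v$ ranging over a \emph{finite} union of affine subspaces directed by $V:=V_{p_1,q_1,p_2,q_2,p_3,q_3}$. Everything then reduces to producing such a generating set, and Propositions \ref{p.reduction1} and \ref{p.reduction2} are exactly the tools for this. First I would normalize the Seifert invariants: replacing $M$ by its orientation reversal if necessary, assume $e(M)>0$; then, since shifting $\ve_1,\ve_2,\ve_3$ by integers summing to $0$ does not change the homeomorphism type of $M$ (hence leaves $\S(M)$ unchanged), I would further arrange $\ve_2,\ve_3\in(-1,0]$. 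The purpose of this last normalization is to make the hypothesis of Proposition \ref{p.reduction2} hold; note that under it $\ve_1=e(M)-\ve_2-\ve_3\in[e(M),e(M)+2)$, so $\ve_1>0$.

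Next I would check that hypothesis, namely $|\ve_1|+1>\max(|\ve_2-1|+|\ve_3|,\ |\ve_2|+|\ve_3+1|)$. Under the normalization the signs of $\ve_2-1,\ \ve_3,\ \ve_2,\ \ve_3+1$ are all determined, so the absolute values open explicitly: the first inequality collapses to $e(M)>0$ and the second to $e(M)-2\ve_3>0$, both of which hold since $e(M)>0$ and $\ve_3\le 0$. This is the step where the assumption $e(M)\neq 0$ is genuinely used, and I expect it to be the main conceptual point of the proof: when $e(M)=0$ no integer shift can force the inequality, in agreement with the fact that such $M$ are Haken and the finite-generation conclusion should then fail.

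With both propositions available, call $v$ \emph{reduced} if $c_1(v)<2|q_1|+2p_1$, $c_2(v)\le p_2$ and $c_3(v)\le p_3$. If $v$ is not reduced, then $c_2(v)>p_2$ or $c_3(v)>p_3$ (so Proposition \ref{p.reduction1} applies), or else $c_1(v)\ge 2|q_1|+2p_1$ (so Proposition \ref{p.reduction2} applies, its hypothesis having just been secured); in every case $\overline{L}_v$ is a $\Z[A^\pmo]$-linear combination of terms $\overline{L}_{v'}$ with $C(v')<C(v)$. Since $c(v)$ takes values in the discrete, bounded-below set $\tfrac{1}{p_1p_2p_3}\Z_{\ge 0}$ and, for fixed $c(v)$, the quantity $c_1(v)\le p_1c(v)$ is bounded, the complexity $C$ takes values in a well-ordered set, so this rewriting cannot continue indefinitely. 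Hence, by induction on $C(v)$, every $\overline{L}_v$ is a $\Z[A^\pmo]$-linear combination of reduced skeins $\overline{L}_{v'}$.

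Finally I would observe that the reduced vectors lie in only finitely many cosets of $V$. Each $c_i(v)=|w_i(v)|=|q_ik_i-p_il_i|$ is constant along $V$, and since $\gcd(p_i,q_i)=1$ the map $v\mapsto(w_1(v),w_2(v),w_3(v))$ descends to an isomorphism $\Z^6/V\cong\Z^3$. The reduced region is cut out by the finitely many conditions $|w_1|<2|q_1|+2p_1$, $|w_2|\le p_2$, $|w_3|\le p_3$, hence is the preimage of a finite subset of $\Z^3$, i.e.\ a finite union of affine subspaces directed by $V$. Combining with the previous paragraph, $\S(N,\Z[A^\pmo])$ is generated by the $\overline{L}_{v'}$ with $v'$ in this finite union, and the reduction Lemma then gives that $\S(M(\ve_1,\ve_2,\ve_3))$ is finitely generated. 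The only real obstacle is the normalization-plus-verification feeding Proposition \ref{p.reduction2}; the termination argument and the coset count are routine once the complexity $C$ and the identification $\Z^6/V\cong\Z^3$ are in place.
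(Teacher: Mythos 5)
Your proposal is correct and follows essentially the same route as the paper: normalize so that $e(M)>0$ and $\ve_2,\ve_3\le 0$ (you sharpen this to $\ve_2,\ve_3\in(-1,0]$), verify the hypothesis of Proposition \ref{p.reduction2} exactly as the paper does, and then combine Propositions \ref{p.reduction1} and \ref{p.reduction2} with the unnumbered reduction lemma. Your write-up is in fact slightly more careful than the paper's, which leaves implicit both the well-ordering argument guaranteeing termination of the complexity reduction and the identification of the reduced region as a finite union of cosets of $V$ (the paper's phrase ``there are finitely many of them'' is literally accurate only modulo that coset structure).
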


\begin{proof}
As before, without loss of generality we assume, that $e(M)>0,$ $\ve_1>0$ and $\ve_2,\ve_3<0$.
Let us first verify the  condition 
$$|\ve_1|+1 > \max(|\ve_2-1|+|\ve_3|, |\ve_2|+|\ve_3+1|),$$
of Proposition \ref{p.reduction2}.
The left side is 
$$\ve_1 +1=e(M)-\ve_2-\ve_3+1> 1-\ve_2-\ve_3,$$ 
while the right side is 
$$\max(-\ve_2+1-\ve_3, -\ve_2 +|\ve_3+1|).$$

Hence, the above inequality follows from the fact that $|\ve_3+1|$ is either $-\ve_3-1$ or is less than $1$.
Recall that $\S(M)$ is generated by the elements $\bar L_v$ for $v\in \Z^6.$ By Propositions \ref{p.reduction1} and \ref{p.reduction2} (which holds by the above inequality), we see that $\S(M)$ is generated by the elements $\bar L_v$ 
satisfying 
$$c_1(v)< 2|q_1|+2p_1,\quad c_2(v)\leq p_2 \ \text{and}\ c_3(v)\leq p_3.$$
There are finitely many of them.
\end{proof}

\vskip 0.02in

Combining Theorem \ref{main-i} and an earlier result of the authors \cite[Theorem 1.1]{DKS}, leads to Corollary \ref{reduced} which we restate here:

\begin{named}{ Corollary \ref{reduced} }
For any non-Haken Seifert fibered manifold $M$,
  we have
$$|X(M)| \leq \dim_{\Q(A)}  \S(M, \Q(A) ) \leq \dim_{\C}\, \C[\cX(M)].$$
In particular, if $\cX(M)$ is reduced then $\dim_{\Q(A)}\S(M)=
|X(M)|$. 
\end{named}

\begin{proof}
It is a direct consequence of Theorem \ref{main-i}, combined with an earlier result of the authors \cite[Theorem 1.1]{DKS}.
Although Theorem \ref{main-i} implies tameness for irreducible $3$-manifolds only, the above statement does hold the reducible Seifert $3$-manifolds, $S^2 \times S^1$ and $\R\P^3\#\R\P^3$. In the first case, $\dim_{\Q(A)} \S(S^2 \times S^1)=1$ by \cite{HP95} and in the latter, 
$$\dim_{\Q(A)} \S(\R\P^3\#\R\P^3)=\dim_{\Q(A)} \S(\R\P^3)\cdot \dim_{\Q(A)} \S(\R\P^3)=4,$$ 
by \cite{HP93} and \cite{Prz00}.
\end{proof}

%%%%%%%%%%%%%%%%%%%%%%%%%%%%%%%%%%%%%%%%%%
\section{Character varieties of non-Haken Seifert manifolds}

%%%%%%%%%%%%%%%%%%%%%%%%%%%%%%%%%%%%%%%%%
\label{sec:characters}

In this section we will
 focus on computing  $|X(M)|$, and understanding the extent to which the character scheme  $\cX(M)$ is reduced,
for all  non-Haken Seifert fibered 3-manifolds $M$.  As discussed in the beginning of the proof of Theorem \ref{main-i} such a 3-manifold is either $\R\P^3 \# \R\P^3$
or of the form
$M=M(\frac{q_1}{p_1},  \frac{q_2}{p_2}, \frac{q_3}{p_3})$, where $M$ is a rational homology sphere or (equivalently)
we  have non-zero Euler number (i.e. $\frac{q_1}{p_1}+ \frac{q_2}{p_2}+ \frac{q_3}{p_3}\neq 0$).
We will work with these assumptions throughout the section.
 
We recall that a presentation of $\pi_1(M)$ is 
\begin{equation}\label{eq:pres}
\pi_1(M)=\langle c_1,c_2,c_3,h \ | \ [c_i,h]=1=c_i^{p_i}h^{q_i} \ \text{for}\ i=1,2,3, \ c_1c_2c_3=1 \rangle.
\end{equation}

Let $R(M):= \mathrm{Hom}(\pi_1(M),\slC)$.  A character $\chi$ of $M$
is called abelian if it is the trace of a diagonal representation $\rho \in R(M)$
and central if $\rho$  takes values in the center $\{\pm I\}$ of $SL(2,\C)$.  
A character is called irreducible if it is the trace of an irreducible  $\rho \in R(M)$.

%%%%%%%%%%%%%%%%%%%%%%%%%%%%%%%%%%%%%%%%%%%%%
\subsection{Abelian characters}
%%%%%%%%%%%%%%%%%%%%%%%%%%%%%%%%%%%%%%%%%%%%%

For any $n\times m$ matrix $P$, where $n\geq m$, let $gcdm(P)$ denote the greatest common divisor of all $m \times m$ minors of $P$. The following lemma will help in counting of abelian characters of $\pi_1(M):$

\begin{lemma}\label{l.gcdm} For any finite abelian group $H$ with an $n\times m$ presentation matrix $P$
$$|Hom(H,\C^*)|=|H|=gcdm(P).$$
\end{lemma}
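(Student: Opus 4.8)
The plan is to prove the two equalities in $|\mathrm{Hom}(H,\C^*)| = |H| = \mathrm{gcdm}(P)$ separately, both resting on the classification of finitely generated abelian groups via Smith normal form. First I would recall that since $H$ is a finite abelian group with an $n \times m$ presentation matrix $P$ (so that $H \cong \Z^m / \mathrm{im}(P^{\mathrm{T}})$, or whatever orientation convention the paper uses, with $n \geq m$ generators-versus-relations), Smith normal form lets me write $P = U D V$ where $U \in \mathrm{GL}_n(\Z)$, $V \in \mathrm{GL}_m(\Z)$, and $D$ is a diagonal-type integer matrix with nonzero diagonal entries $d_1 \mid d_2 \mid \cdots \mid d_m$ (nonzero because $H$ is finite, which forces the presentation to have full column rank $m$). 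Then $H \cong \bigoplus_{i=1}^m \Z/d_i\Z$, so $|H| = \prod_{i=1}^m d_i$.

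For the equality $|\mathrm{Hom}(H,\C^*)| = |H|$, I would use that $\mathrm{Hom}(-,\C^*)$ sends the direct sum to a product, and that for a finite cyclic group $\mathrm{Hom}(\Z/d\Z, \C^*)$ consists exactly of the $d$-th roots of unity, which has cardinality $d$ since $\C^*$ contains all roots of unity. Hence $|\mathrm{Hom}(H,\C^*)| = \prod_{i=1}^m d_i = |H|$. This step is routine Pontryagin-style duality for finite abelian groups and should require only a sentence or two.

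The substance is in the equality $|H| = \mathrm{gcdm}(P)$. Here I would invoke the standard fact that the $k$-th determinantal divisor of an integer matrix — the gcd of all $k \times k$ minors — is invariant under left and right multiplication by elements of $\mathrm{GL}(\Z)$, because such multiplications send minors to integer linear combinations of minors (Cauchy--Binet), and invertibility makes the operation reversible so the gcd is unchanged. Applying this with $k = m$ (the largest square-minor size, since $n \geq m$), the gcd of all $m \times m$ minors of $P$ equals the gcd of all $m \times m$ minors of its Smith normal form $D$. But $D$ has a single nonzero $m \times m$ minor up to sign, namely $\prod_{i=1}^m d_i$ (any $m$ rows other than the first $m$ contribute a zero row, and the top $m \times m$ block is diagonal), so $\mathrm{gcdm}(D) = \prod_{i=1}^m d_i = |H|$. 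Chaining the invariance gives $\mathrm{gcdm}(P) = |H|$.

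The main obstacle I anticipate is bookkeeping rather than conceptual: pinning down the exact convention for what ``$n \times m$ presentation matrix'' means here (rows versus columns as generators versus relations, and why the hypothesis $n \geq m$ is the right inequality for the largest minors to be the $m \times m$ ones) and verifying that $H$ finite forces $D$ to have no zero diagonal entries — equivalently that $P$ has rank $m$. If $P$ did not have full column rank $m$, then $H$ would have a free $\Z$-summand and thus be infinite, contradicting the hypothesis; making this explicit is the one place where the finiteness assumption genuinely enters and must be stated. Everything downstream (Cauchy--Binet invariance of determinantal divisors, the cyclic computation) is standard, so I would keep those justifications brief and cite the Smith normal form machinery.
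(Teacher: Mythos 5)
Your proof is correct and follows essentially the same route as the paper's: both reduce to Smith normal form, use invariance of the gcd of maximal minors under $\mathrm{GL}(\Z)$ row and column operations, and count $\mathrm{Hom}(H,\C^*)$ via the cyclic decomposition. Your version is slightly more careful in justifying the minor-gcd invariance (Cauchy--Binet) and in noting that finiteness of $H$ forces full column rank, points the paper leaves implicit.
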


\begin{proof}
By presenting $H$ as a product of cyclic groups $\Z/k_1 \times ... \times \Z/{k_\ell}$ we see that the number of homomorphisms $H\to \C^*$ is $k_1\cdot ... \cdot {k_\ell}$ and, hence, it coincides with $|H|.$

We say that $P$ is diagonal if its only non-zero entries are of the form $p_{ii}$ for some $1\leq i\leq m.$ If the presentation matrix $P$ of $H$ is diagonal then $|H|=gcdm(P)$. Note now that any $n\times m$ matrix $P$ can be brought to a diagonal one (called its Smith form) by multiplying it by an $n \times n$-matrix on the left and an $m \times m$-matrix on the right. Since these  operations do not change the isomorphism type of $H$  nor 
$gcdm(P)$, it follows that $|H|=gcdm(P)$.
\end{proof}

We begin  with a lemma that computes  the number of abelian characters of $M$.

\begin{lemma} \label{lemma:abelianChar}  (a)\ The number of abelian characters of $M$ is $\frac{1}{2}\left( |H_1(M,\Z)|+|H_1(M,\Z/2\Z)|\right)$.
\vskip 0.02in

(b) \  We have
	$$|H_1(M,\Z)|=|p_1p_2q_3+p_1q_2p_3+q_1p_2p_3|=p_1p_2p_3|e(M)|,$$
	and
	$$ |H_1(M,\Z/2\Z)|=\begin{cases}
		4 \ \ \textrm{if}  \  2 \ \textrm{divides} \ p_1,p_2,p_3,
		\\ 2\  \ \textrm{if} \ 2  \ \textrm{divides}\  p_1p_2p_3|e(M)| \ \textrm{but not all of }p_1,p_2,p_3,
		\\ 1 \  \ \textrm{otherwise.}
	\end{cases}$$
	\end{lemma}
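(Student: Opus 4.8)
The plan is to treat the two parts separately: part (a) by an orbit-counting argument for the Weyl-group action on diagonal representations, and part (b) from the presentation \eqref{eq:pres} via a determinant computation followed by a parity analysis modulo $2$.

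For part (a), I would first observe that every diagonal representation $\rho\colon\pi_1(M)\to\slC$ factors through $H_1(M,\Z)$, so it is equivalent to a homomorphism $\phi\colon H_1(M,\Z)\to\C^*$, where $\rho$ sends a generator $g$ to the diagonal matrix with entries $\phi(g)^{\pm1}$. Since the trace of a diagonal matrix records only the unordered pair $\{\lambda,\lambda^{-1}\}$, two such representations have the same character iff $\phi(g)\in\{\psi(g),\psi(g)^{-1}\}$ for all $g$; and because the sets where $\phi=\psi$ and where $\phi=\psi^{-1}$ are subgroups covering $H_1(M,\Z)$, a group cannot be the union of two proper subgroups, so in fact $\phi=\psi$ or $\phi=\psi^{-1}$. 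Hence the abelian characters form the quotient of $\mathrm{Hom}(H_1(M,\Z),\C^*)$ by the involution $\phi\mapsto\phi^{-1}$. By Lemma \ref{l.gcdm} there are $|H_1(M,\Z)|$ such homomorphisms, and by Burnside's lemma the number of orbits is $\tfrac12(|H_1(M,\Z)|+F)$, where $F$ counts the fixed points $\phi=\phi^{-1}$, i.e.\ the $\{\pm1\}$-valued homomorphisms. These are exactly $\mathrm{Hom}(H_1(M,\Z),\Z/2\Z)\cong\mathrm{Hom}(H_1(M,\Z/2\Z),\Z/2\Z)$, of cardinality $|H_1(M,\Z/2\Z)|$, which yields the formula in (a).

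For the first equality of part (b), I would abelianize \eqref{eq:pres}: written additively on $c_1,c_2,c_3,h$, the relations $c_i^{p_i}h^{q_i}=1$ and $c_1c_2c_3=1$ give the square presentation matrix
\[
P=\begin{pmatrix} p_1 & 0 & 0 & q_1 \\ 0 & p_2 & 0 & q_2 \\ 0 & 0 & p_3 & q_3 \\ 1 & 1 & 1 & 0 \end{pmatrix}.
\]
Expanding $\det P$ along the last row gives $\det P=\pm(p_1p_2q_3+p_1q_2p_3+q_1p_2p_3)=\pm\,p_1p_2p_3\,e(M)$, which is nonzero since $e(M)\neq0$; thus $H_1(M,\Z)$ is finite and, by Lemma \ref{l.gcdm} applied to the square matrix $P$ (so that $gcdm(P)=|\det P|$), we get $|H_1(M,\Z)|=|p_1p_2q_3+p_1q_2p_3+q_1p_2p_3|=p_1p_2p_3|e(M)|$.

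For the second equality I would compute $|H_1(M,\Z/2\Z)|$ as the order of the cokernel of $P$ reduced modulo $2$, using $H_1(M,\Z/2\Z)\cong H_1(M,\Z)\otimes\Z/2\Z$. Since $\gcd(p_i,q_i)=1$ forces $(p_i,q_i)\not\equiv(0,0)$, each relation $p_ic_i+q_ih\equiv0$ over $\F_2$ either sets $c_i=0$ (if $p_i$ odd, $q_i$ even), identifies $c_i=h$ (if both odd), or forces $h=0$ (if $p_i$ even). I would then run the bookkeeping according to the number $E$ of even $p_i$: if $E=3$ the relations reduce to $h=0$ and $c_1+c_2+c_3=0$, giving order $4$; if $E=2$ (say $p_2,p_3$ even) one gets $h=c_1=0$ and $c_2=c_3$, order $2$; if $E=1$ all generators vanish, order $1$; and if $E=0$ the group is generated by $h$ with the single relation $(q_1+q_2+q_3)h\equiv0$, of order $2$ or $1$ according as $q_1+q_2+q_3$ is even or odd. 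The step I expect to be the main obstacle is matching these outcomes to the stated trichotomy, which is phrased via divisibility of $p_1p_2p_3|e(M)|$ rather than the individual parities: here I would use the congruence $|H_1(M,\Z)|\equiv p_1p_2q_3+p_1q_2p_3+q_1p_2p_3\pmod2$ to verify that the odd-order outcomes ($E=1$, and $E=0$ with $q_1+q_2+q_3$ odd) are exactly those with $2\nmid p_1p_2p_3|e(M)|$ (the ``otherwise'' case, order $1$), that $E=3$ is the ``all $p_i$ even'' case (order $4$), and that the remaining even-order outcomes ($E=2$, and $E=0$ with $q_1+q_2+q_3$ even) constitute the middle case (order $2$).
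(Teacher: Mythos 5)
Your proof is correct and takes essentially the same route as the paper's: part (a) counts $\mathrm{Hom}(H_1(M,\Z),\C^*)$ modulo the inversion involution, with fixed points identified with the $\{\pm 1\}$-valued homomorphisms of cardinality $|H_1(M,\Z/2\Z)|$, and part (b) abelianizes the presentation, computes the determinant via Lemma \ref{l.gcdm}, and finishes with a parity analysis mod $2$. The differences are only cosmetic: you work with the full $4\times 4$ presentation matrix instead of eliminating $c_3$ to get a $3\times 3$ one, you determine $H_1(M,\Z/2\Z)$ by direct case analysis on the relations rather than via vanishing of $2\times 2$ minors, and your union-of-two-subgroups argument in (a) makes explicit a step the paper leaves implicit.
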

\begin{proof}\ (a) The statement is true for any rational homology sphere:
Up to conjugation, an abelian representation of $\pi_1(M)$ has the form 
	$$\rho_{\lambda}(x)=\begin{pmatrix}
		\lambda(x) & 0 \\ 0 & \lambda(x)^{-1}
	\end{pmatrix},$$
	where $\lambda \in \mathrm{Hom}(\pi_1(M),\C^*)\simeq H_1(M,\Z).$ Two such representations $\rho_{\lambda}$ and $\rho_{\lambda'}$ are conjugated if and only if $\lambda=(\lambda')^{\pm 1}$ (as maps $\pi_1(M)\rightarrow \C^*$). Since one has $\lambda=\lambda^{-1}$ if and only if $\lambda$ takes values in $\lbrace  \pm 1 \rbrace,$ that is, $\lambda \in H_1(M,\Z/2\Z),$ the statement  follows. 
\vskip 0.02in

(b)\ Abelianizing the presentation of $\pi_1(M)$  we get the presentation
$$H_1(M, \Z)=\langle h, c_1,c_2 \ | \   q_i h+p_i c_i= 0, 
\ \text{for}\  i=1,2,\  q_3 h-p_3c_1-p_3 c_2=0\rangle.$$

By Lemma \ref{l.gcdm} the order of $H_1(M,\Z)$ is the absolute value of the determinant of the presentation matrix,
$$\left(\begin{matrix}
q_1 & p_1 & 0  \\ 
	q_2 & 0 & p_2 \\
	q_3 &- p_3 & -p_3
\end{matrix}\right)$$
which gives the first formula, where the matrix is taken  with respect to the ordered set of generators
$\{h, c_1, c_2\}$. 

 For the second formula, note that for each pair $(p_i,q_i)$ at most one of the components is even. So the matrix of above presentation is non-zero mod $2$ and the dimension of $H_1(M,\Z/2\Z)$ over $\Z/2\Z$ is at most $2.$ The dimension is non-zero if and only if the determinant of this matrix is $0$ mod $2.$ For the dimension to be $2$ one needs all of the $2\times 2$ minors: $p_1q_2,\ q_1p_2,\ p_1p_2,\ -q_1p_3-p_1q_3,\ -q_1p_3,\ -p_1p_3,\ -q_2p_3,\ -q_2p_3-p_2q_3,\ p_2p_3$ 
to vanish in $\Z/2\Z$. The later happens if and only if $p_1,p_2$ and $p_3$ are even. 
\end{proof}

\begin{definition}\label{def:excepCharac} An abelian character $\chi$ of $M=M(\frac{q_1}{p_1},  \frac{q_2}{p_2}, \frac{q_3}{p_3})$ is \emph{exceptional} if it is the trace of a representation $\rho \in R(M)$,  such that $\rho(h)=\pm I$ and $\rho(c_i)\neq \pm I$ for $i=1,2,3.$ 
	
	We will use  $x_M$ to denote the number of exceptional abelian characters of $M$.
\end{definition}

\begin{proposition}\label{prop:nbExcepCharac} 
For $\lbrace i,j,k\rbrace=\lbrace 1,2,3\rbrace$, let 
$$m_i:=\gcd(m,4p_j,4p_k,2q_ip_j, 2q_ip_k, 2(p_jq_k+q_jp_k)).$$
We have
$$x_M=\frac{1}{2}(m-m_1-m_2-m_3)+|H_1(M,\Z/2\Z)|,$$
where $m=\gcd (p_1p_2p_3|e(M)|, 2p_1p_2,  2p_1p_3,  2p_2p_3)$.
\end{proposition}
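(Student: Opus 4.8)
The plan is to recognize exceptional abelian characters as diagonal representations modulo inversion, count those by inclusion--exclusion, and evaluate each term as the order of an explicit finite abelian group via Lemma \ref{l.gcdm}. Following Lemma \ref{lemma:abelianChar}(a), write an abelian representation as $\rho_\lambda(x)=\mathrm{diag}(\lambda(x),\lambda(x)^{-1})$ with $\lambda\in\mathrm{Hom}(\pi_1(M),\C^*)\cong\mathrm{Hom}(H_1(M,\Z),\C^*)$, where $\rho_\lambda,\rho_{\lambda'}$ have the same character iff $\lambda'=\lambda^{\pm1}$. For a diagonal representation, $\rho_\lambda(h)=\pm I\iff\lambda(h)=\pm1$ and $\rho_\lambda(c_i)=\pm I\iff\lambda(c_i)=\pm1$, both invariant under $\lambda\mapsto\lambda^{-1}$. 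I would first show that the exceptional abelian characters are precisely those carried by a diagonal $\rho_\lambda$ with $\lambda(h)=\pm1$ and $\lambda(c_i)\neq\pm1$ for all $i$. The only delicate direction is that an abelian character which is exceptional through some a priori non-diagonal $\rho$ already has its diagonal representative satisfy these conditions: $\rho(h)=\pm I$ gives $\mathrm{Tr}\,\rho(h)=\pm2$, hence $\lambda(h)=\pm1$; and if $\lambda(c_i)=\pm1$ then $\mathrm{Tr}\,\rho(c_i)=\pm2$ while $\rho(c_i)\neq\pm I$, so $\rho(c_i)$ is parabolic and $\rho(c_i)^{p_i}\neq\pm I$, contradicting $\rho(c_i)^{p_i}=\rho(h)^{-q_i}=\pm I$ coming from the relation $c_i^{p_i}h^{q_i}=1$ in \eqref{eq:pres}.

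For the count, let $U=\{\lambda:\lambda(h)=\pm1\}$ and $A_i=\{\lambda\in U:\lambda(c_i)=\pm1\}$, and set $m=|U|$, $m_i=|A_i|$, $m'=|A_1\cap A_2\cap A_3|$. Because $c_1c_2c_3=1$, any $\lambda$ lying in two of the $A_i$ lies in the third, so all pairwise and triple intersections equal $A_1\cap A_2\cap A_3$. Hence inclusion--exclusion gives
\[
\#\{\lambda\in U:\lambda(c_i)\neq\pm1\ \forall i\}=m-(m_1+m_2+m_3-2m')=m-m_1-m_2-m_3+2m'.
\]
On this set the involution $\lambda\mapsto\lambda^{-1}$ is fixed-point-free, since a fixed $\lambda$ would satisfy $\lambda=\lambda^{-1}$, forcing $\lambda(c_i)=\pm1$; dividing by $2$ yields $x_M=\tfrac12(m-m_1-m_2-m_3)+m'$.

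It remains to evaluate the orders. Since $\lambda(h)=\pm1\iff\lambda(2h)=1$, the set $U$ is $\mathrm{Hom}(H_1(M,\Z)/\langle2h\rangle,\C^*)$, so $m=|H_1(M,\Z)/\langle2h\rangle|$; likewise $m_i=|H_1(M,\Z)/\langle2h,2c_i\rangle|$, while $m'$ counts homomorphisms into $\{\pm1\}$, giving $m'=|H_1(M,\Z/2\Z)|$. Presenting each quotient by adjoining to the matrix $\left(\begin{smallmatrix}q_1&p_1&0\\ q_2&0&p_2\\ q_3&-p_3&-p_3\end{smallmatrix}\right)$ from the proof of Lemma \ref{lemma:abelianChar}(b) the row $(2,0,0)$ (and, for $m_i$, the further row encoding $2c_i=0$), Lemma \ref{l.gcdm} identifies the order with the gcd of all $3\times3$ minors. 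Expanding these minors recovers $m=\gcd(p_1p_2p_3|e(M)|,2p_1p_2,2p_1p_3,2p_2p_3)$, and for $m_1$ the new minors contribute exactly $4p_2,4p_3,2q_1p_2,2q_1p_3,2(p_2q_3+q_2p_3)$, so that $m_1=\gcd(m,4p_2,4p_3,2q_1p_2,2q_1p_3,2(p_2q_3+q_2p_3))$; the cases $m_2,m_3$ follow by permuting the indices. Substituting into the formula above gives the stated expression for $x_M$.

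I expect the two genuinely non-formal points to be the ``only if'' half of the first step, where the relations of $\pi_1(M)$ must be used to exclude parabolic values of $\rho(c_i)$, and the minor bookkeeping in the last step. The crux is verifying that the extra rows produce precisely the auxiliary generators $4p_j$, $2q_ip_j$ and $2(p_jq_k+q_jp_k)$ of $m_i$, so that the abstract inclusion--exclusion count matches the explicit gcd formula in the statement.
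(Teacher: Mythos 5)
Your proposal is correct and follows essentially the same route as the paper's proof: reduce to counting homomorphisms $\lambda\in\mathrm{Hom}(\pi_1(M),\C^*)$ with $\lambda(h)=\pm1$ and $\lambda(c_i)\neq\pm1$ modulo inversion, apply inclusion--exclusion using the fact that membership in two of the sets $A_i$ forces membership in the third, and evaluate $m$, $m_i$ via Lemma \ref{l.gcdm} applied to the presentation matrices obtained by adjoining rows $(2,0,0)$ and $(0,2,0)$ (etc.), with the same minor bookkeeping. Your treatment is in fact slightly more careful than the paper's at one point: you explicitly rule out, via the parabolic-order argument using $c_i^{p_i}h^{q_i}=1$, the possibility that a non-diagonal witness makes a character exceptional while its diagonal representative has some $\lambda(c_i)=\pm1$, a verification the paper leaves implicit.
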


\begin{proof}
	First, note that 
	$$x_M=\frac{1}{2} \left|\lbrace\varphi\in \mathrm{Hom}(\pi_1(M),\C^*) \ | \ \varphi(h)=\pm 1, \varphi(c_i)\neq \pm 1\rbrace \right|,$$
	since any such homomorphism $\phi$ is different from $\phi^{-1},$ and each exceptional character comes from a diagonal representation $\rho_{\phi}$ with diagonal $(\phi,\phi^{-1}),$ and $\rho_{\phi}$ is conjugated to $\rho_{\phi'}$ if and only if $\phi=\phi'^{\pm 1}.$ 
	
By inclusion-exclusion principle, the number of such homomorphisms $\varphi$ is 
$$m-m_1-m_2-m_3+2|H_1(M,\Z/2\Z)|,$$ 
where $m$ is the number of $\varphi\in \mathrm{Hom}(\pi_1(M),\C^*)$ with $\varphi(h)=\pm 1,$ and $m_i$ is the number of $\varphi\in \mathrm{Hom}(\pi_1(M),\C^*)$ such that $\varphi(h)=\varphi(c_i)=\pm 1.$ (Note that if two of $c_1,c_2,c_3$ are mapped to $\pm 1,$ so is the third one).  
By Lemma \ref{l.gcdm}, $m= |H_1(M)/(2h)|$. Eliminating $c_3$ from the presentation of $H_1(M)/(2h)$ that with respect to the ordered set of generators
$\{h, c_1, c_2\}$ the presentation matrix is
$$P=\begin{pmatrix}
	q_1 & p_1 & 0  \\ 
	q_2 & 0 & p_2 \\
	q_3 &- p_3 & -p_3 \\
	2 & 0 & 0 
\end{pmatrix}$$

for which 
\begin{equation}\label{e.gcdmP}
gcdm(P)=\gcd(p_1p_2p_3|e(M)|,2p_1p_2,2p_1p_3,2p_2p_3).
\end{equation} 
	
The computations of $m_1,m_2,m_3$ are similar. For instance,  $m_1=gcdm(P_1)$, where
$$P_1=\begin{pmatrix}
	q_1 & p_1 & 0  \\ q_2 & 0 & p_2 \\ q_3 &  -p_3  & -p_3 \\
	2 & 0 & 0\\  0 & 2 & 0
\end{pmatrix}.$$
Note that  $P_1$ has $\left({5 \atop 2}\right)=10$ of $3 \times 3$-minors, four of which appear in Eq. \eqref{e.gcdmP}.
Taking into account the remaining six we obtain 
\begin{equation}\label{e.m1}
m_1=gcdm(P_1)=\gcd(m,4p_2,4p_3,2q_1p_2,2q_1p_3,2(q_2p_3+p_2q_3)).
\end{equation} 
Since the presentation \eqref{eq:pres}  is preserved by the permutations of indices $1,2,3$,
we obtain formulas for $m_2$ and $m_3$ by permuting indices in \eqref{e.m1}.
The formula for $x_M$ follows.
\end{proof}
\vskip 0.1in

In the remaining of the subsection we give conditions under which we have $x_M=0$ (respectively $x_M>0$). This information is  used in the next subsection to study when
 $\cX(M)$ is reduced. Note that the conclusion of Proposition \ref{p.excepCharac} (respectively, \ref{p.excepCharac2}) holds for all for all  $q_1,q_2,q_3$ coprime with $p_1,p_2,p_3$.
\begin{proposition} 
\label{p.excepCharac}
 If $H_1(M,\Z)$ is $2$-torsion or  if $p_1,p_2, p_3$ are weakly coprime, 
then $x_M=0$.
\end{proposition}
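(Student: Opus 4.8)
The plan is to bypass the gcd computation of Proposition \ref{prop:nbExcepCharac} and instead work directly with the characterization of exceptional characters established in its proof, namely
$$x_M=\frac{1}{2} \left|\lbrace\varphi\in \mathrm{Hom}(\pi_1(M),\C^*) \ | \ \varphi(h)=\pm 1,\ \varphi(c_i)\neq \pm 1\ \text{for}\ i=1,2,3\rbrace \right|.$$
Thus it suffices to show that under either hypothesis there is \emph{no} homomorphism $\varphi\colon\pi_1(M)\to\C^*$ with $\varphi(h)=\pm 1$ and $\varphi(c_i)\neq\pm 1$ for all $i$. I would set $\zeta:=\varphi(h)\in\{\pm 1\}$ and $a_i:=\varphi(c_i)$, and record the constraints coming from the abelianized relations of the presentation \eqref{eq:pres}: the relation $c_i^{p_i}h^{q_i}=1$ gives $a_i^{p_i}=\zeta^{-q_i}$, and $c_1c_2c_3=1$ gives $a_1a_2a_3=1$. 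The single observation that drives everything is that, since $\zeta^2=1$, squaring the first relation yields
$$a_i^{2p_i}=\zeta^{-2q_i}=1 \qquad (i=1,2,3),$$
so each $a_i$ is a root of unity of order dividing $2p_i$. The goal in each case is to sharpen this to $a_i^2=1$ for some $i$, contradicting $a_i\neq\pm 1$.

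For the first hypothesis, suppose $H_1(M,\Z)$ is $2$-torsion. Since $\varphi$ factors through $H_1(M,\Z)=\pi_1(M)^{\mathrm{ab}}$, every value $\varphi(x)$ has order dividing $2$; in particular $a_i^2=1$, i.e. $a_i=\pm 1$, for every $i$. This directly contradicts the defining requirement $\varphi(c_i)\neq\pm 1$, so no admissible $\varphi$ exists and $x_M=0$.

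For the second hypothesis, assume $p_1,p_2,p_3$ are weakly coprime and, without loss of generality, that $p_1$ is coprime to both $p_2$ and $p_3$, so that $\gcd(p_1,p_2p_3)=1$. Here I would exploit the product relation: writing $a_1=(a_2a_3)^{-1}$ and using $a_2^{2p_2}=a_3^{2p_3}=1$ gives
$$a_1^{2p_2p_3}=(a_2^{2p_2})^{-p_3}(a_3^{2p_3})^{-p_2}=1.$$
Combining this with $a_1^{2p_1}=1$, the order of $a_1$ divides $\gcd(2p_1,2p_2p_3)=2\gcd(p_1,p_2p_3)=2$, whence $a_1^2=1$ and $a_1=\pm 1$, again a contradiction. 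Therefore $x_M=0$ in this case as well. I do not expect a genuine obstacle here: the argument is elementary once the characterization of $x_M$ is in hand. The only point requiring mild care is the bookkeeping in the weakly coprime case—ensuring that the two divisibility relations $a_1^{2p_1}=1$ and $a_1^{2p_2p_3}=1$ are combined via $\gcd$ correctly and that the coprimality $\gcd(p_1,p_2p_3)=1$ is invoked to collapse the order of $a_1$ down to $2$. Note that neither case uses $\gcd(p_i,q_i)=1$, which is consistent with the remark preceding the proposition that the conclusion holds for all admissible $q_1,q_2,q_3$.
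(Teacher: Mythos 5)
Your proof is correct. The $2$-torsion case is identical to the paper's (all abelian representations land in $\{\pm I\}$), but your weakly coprime case runs along a genuinely different line. The paper also reduces to showing $\varphi(c_1)=\pm 1$, but it gets the second divisibility constraint from homology: it invokes the formula $|H_1(M,\Z)|=|p_1p_2q_3+p_1q_2p_3+q_1p_2p_3|$ (Lemma \ref{lemma:abelianChar}), observes that this order is coprime to $p_1$ --- which uses both weak coprimality \emph{and} $\gcd(q_1,p_1)=1$ --- and combines it with $\varphi(c_1)^{p_1}=\pm 1$ to force $\varphi(c_1)=\pm 1$. You instead extract the constraint $a_1^{2p_2p_3}=1$ directly from the relation $c_1c_2c_3=1$ together with $a_i^{2p_i}=1$, and then conclude via $\gcd(2p_1,2p_2p_3)=2\gcd(p_1,p_2p_3)=2$. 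What your route buys: it is self-contained (no appeal to the $H_1$ order computation or to Lemma \ref{l.gcdm}) and, as you note, it never uses $\gcd(p_i,q_i)=1$, so it proves the statement for arbitrary $q_i$, slightly more than the paper's remark claims; what the paper's route buys is economy, since the homology formula is already available and gets reused elsewhere in the section. One small bookkeeping remark: you only need the equivalence ``$x_M=0$ iff no $\varphi$ with $\varphi(h)=\pm 1$, $\varphi(c_i)\neq\pm 1$ exists,'' which follows from Definition \ref{def:excepCharac} alone, so citing the counting formula from the proof of Proposition \ref{prop:nbExcepCharac} is more than you need, though certainly legitimate.
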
 

\begin{proof} If $H_1(M,\Z)$ is $2$-torsion, then $x_M=0$
because all abelian representations have values in $\{\pm I\}$ in this case.

To prove the remaining claim,
assume, without loss of generality, that  $p_1$ is coprime with $p_2, p_3$.
Then $|H_1(M,\Z)|=|p_1p_2q_3+p_1q_2p_3+q_1p_2p_3|$ is coprime with $p_1$, and any $\rho \in R(M)$
such that $\rho(h)=\pm I$,  satisfies $\rho(c_1)^{p_1}=\pm I$.  But since $p_1$ is coprime with $|H_1(M,\Z)|$ we must have $\rho(c_1)=\pm I$ also.	
\end{proof}

On the other hand, we have:

\begin{proposition}\label{p.excepCharac2}Suppose that, for  some $\{i,j,k\}=\{1,2,3\}$,
$d:=gcd(p_i,p_j)>2$ and $s:=gcd(p_ip_j/d,p_k)>2$. If
 either $d\ne 4$ or $s\ne 4$, 
then $x_M>0$ (for all $q_1,q_2,q_3$ coprime with $p_1,p_2,p_3$).
\end{proposition}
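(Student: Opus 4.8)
The plan is to prove $x_M>0$ by directly exhibiting a single exceptional abelian character; this is cleaner than analyzing the inclusion–exclusion formula of Proposition \ref{prop:nbExcepCharac}. Since $\C^*$ is abelian, a character counted by $x_M$ comes from a homomorphism $\varphi\colon H_1(M,\Z)\to\C^*$ via $\rho_\varphi=\mathrm{diag}(\varphi,\varphi^{-1})$. Making the choice $\varphi(h)=1$ and abelianizing the presentation \eqref{eq:pres}, an exceptional character amounts to finding $\zeta_i:=\varphi(c_i)$ with $\zeta_i^{p_i}=1$, $\zeta_1\zeta_2\zeta_3=1$, and $\zeta_i\neq\pm1$ for $i=1,2,3$. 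Note this construction never refers to the $q_i$, so it will automatically hold for all admissible $q_i$ (indeed coprimality is not even needed here). Relabel so that $d=\gcd(p_1,p_2)>2$ and $s=\gcd(\mathrm{lcm}(p_1,p_2),p_3)>2$, write $L=\mathrm{lcm}(p_1,p_2)=p_1p_2/d$, and record that $s=\gcd(L,p_3)$, so $s\mid L$.

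First I would fix $\zeta_3$. Writing $\mu_n$ for the group of $n$-th roots of unity, I choose $\zeta_3\in\mu_s$ with $\zeta_3\neq\pm1$, which is possible since $s\geq3$; in the special case $d=4$ I moreover require $\zeta_3\notin\mu_4$, which is possible precisely because $s\neq4$ forces $\mu_s\not\subseteq\mu_4$ (as $s\mid4$ with $s>2$ would give $s=4$). Since $s\mid L$ we have $\zeta_3^{-1}\in\mu_L$, and the product map $\mu_{p_1}\times\mu_{p_2}\to\mu_L$ is surjective with kernel $\{(\eta,\eta^{-1}):\eta\in\mu_d\}$. Hence $\zeta_1\zeta_2=\zeta_3^{-1}$ has exactly $d$ solutions, all of the form $(\zeta_1^0\eta,\zeta_2^0\eta^{-1})$ for a fixed solution $(\zeta_1^0,\zeta_2^0)$ and $\eta$ ranging over $\mu_d$. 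It then remains to find $\eta\in\mu_d$ with $\zeta_1^0\eta\neq\pm1$ and $\zeta_2^0\eta^{-1}\neq\pm1$.

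The key combinatorial point is to bound the number of ``bad'' $\eta$. For the first variable, the coset $\zeta_1^0\mu_d$ contains $1$ iff $\zeta_1^0\in\mu_d$ and contains $-1$ iff $\zeta_1^0\in-\mu_d$, and similarly for the second. If $d$ is odd then $-1\notin\mu_d$, so $\mu_d$ and $-\mu_d$ are disjoint and each coset meets $\{1,-1\}$ in at most one point; thus at most $2<d$ values of $\eta$ are bad. If $d$ is even then $-\mu_d=\mu_d$, so each coset contains both or neither of $\pm1$, contributing $0$ or $2$ bad values per variable and at most $4$ in total, which is $<d$ once $d\geq6$. In the remaining case $d=4$, the extra requirement $\zeta_3^{-1}\notin\mu_4$ prevents $\zeta_1^0$ and $\zeta_2^0$ from both lying in $\mu_4$ (their product is $\zeta_3^{-1}$), so at least one variable contributes no bad values and the total is at most $2<4$. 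In every case a good $\eta$ exists, yielding $\zeta_1,\zeta_2,\zeta_3\neq\pm1$ and hence an exceptional character, so $x_M>0$.

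I expect the main obstacle to be the honest case analysis on $d$ in the final step: specifically, isolating the genuinely exceptional configuration $d=s=4$ and recognizing that the freedom to choose $\zeta_3\notin\mu_4$ (available exactly when $s\neq4$) is what rescues the count when $d=4$. The odd-$d$ and large-even-$d$ cases are comparatively routine once the coset bookkeeping for when $\zeta_1^0\mu_d$ meets $\{\pm1\}$ is set up.
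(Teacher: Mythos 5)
Your proof is correct, and its skeleton coincides with the paper's: both arguments exhibit a single explicit exceptional character via a diagonal representation with $\rho(h)=I$ (so the $q_i$ drop out), reduce to solving $\zeta_1\zeta_2\zeta_3=1$ in roots of unity with $\zeta_i^{p_i}=1$ and $\zeta_i\neq\pm 1$, and parametrize the solutions by $\mu_d$ — your coset $(\zeta_1^0\eta,\zeta_2^0\eta^{-1})$ is exactly the paper's family of shifts $v_1'=v_1+kp_1/d$, $v_2'=v_2-kp_2/d$ in additive notation. Where you genuinely diverge is the final step. The paper fixes $\zeta_3$ to be a primitive $s$-th root of unity (it solves $\frac{v_1}{p_1}+\frac{v_2}{p_2}=\frac{1}{s}$ by a B\'ezout argument using $d\mid p_1p_2/s$) and then runs a proof by contradiction along the $\mu_d$-orbit, with the excluded configuration $d=s=4$ surfacing only at the end of the contradiction chain. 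You instead exploit the freedom of choosing $\zeta_3$ anywhere in $\mu_s$ — imposing $\zeta_3\notin\mu_4$ when $d=4$, which is precisely what the hypothesis $s\neq 4$ buys — and then bound the bad coset elements directly: at most $2$ when $d$ is odd, at most $4$ when $d$ is even (harmless once $d\geq 6$), and at most $2$ when $d=4$ thanks to the choice of $\zeta_3$. Your count is more transparent about why $d=s=4$ is the genuinely obstructed case, at the cost of needing the surjectivity-and-kernel computation for the product map $\mu_{p_1}\times\mu_{p_2}\to\mu_L$, where the paper gets existence of a solution from a one-line divisibility observation.
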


\begin{proof}
Without loss of generality assume that $i=1, j=2,k=3.$ 
Since $d\mid \frac{p_1p_2}{s},$ we have
$v_1p_2+v_2p_1= \frac{p_1p_2}{s}$,
for some $v_1,v_2\in \Z$ or, equivalently, 
\begin{equation}\label{e.vp}
\frac{v_1}{p_1}+\frac{v_2}{p_2}= \frac{1}{s}.
\end{equation}
We claim we can choose $v_1,v_2\in \Z$ so that $\frac{v_1}{p_1},\frac{v_2}{p_2}\not\in \frac12\Z$.
Assuming the claim for a moment, note we have an exceptional representation $\rho \in R(M)$, where $\rho(h)=I$ and
$\rho(c_i)= {\rm{Diag} }(e^{2\pi v_i/p_i}, e^{-2\pi v_i/p_i})$, for $i=1,2$. Here, we utilize the fact that $s>2$ and that it divides $p_3$.

Let us prove the above claim now:
Note that integers
$$v_1'=v_1+k \frac{p_1}d,\quad v_2'=v_2- k \frac{p_2}d$$
provide another solution of \eqref{e.vp} for every $k\in\Z$. We argue that $\frac{v'_1}{p_1},\frac{v'_2}{p_2}\not\in \frac12\Z$,
for some $k\in \Z$.
Assume, for a contradiction, that this is not  the case: Then $\frac{v_i}{p_i}\in \frac12\Z$ for at least one $i=1,2$,  say
$\frac{v_1}{p_1}\in \frac12\Z.$ 
We have $\frac{v_1+p_1/d}{p_1}\not\in \frac12\Z$, since $d>2.$ Consequently,
$\frac{v_2-p_2/d}{p_2}\in \frac12\Z$ and, hence, $\frac{v_2-2p_2/d}{p_2}\not\in \frac12\Z$, implying
$\frac{v_1+2p_1/d}{p_1}\in \frac12\Z$. This means that $\frac1d\in \frac14\Z$ implying that $d=4.$ That means that $s\ne 1,2,4$
contradicting \eqref{e.vp}, since $\frac{v_1}{p_1}$ and $\frac{v_2}{p_2}-\frac14$ are in $\frac12\Z$.
\end{proof}
\vskip 0.06in

  It may worth pointing out that if at most one of $p_1,p_2,p_3$ is even,  then the assumptions of Propositions \ref{p.excepCharac} and \ref{p.excepCharac2} are complementary. Specifically, we have:

\begin{corollary}
If at most one of $p_1,p_2,p_3$ is even then $x_M=0$ if and only if $p_1,p_2,p_3$ are weakly coprime.
\end{corollary}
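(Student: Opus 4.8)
The plan is to combine Propositions \ref{p.excepCharac} and \ref{p.excepCharac2}, observing that under the parity hypothesis they cover exactly complementary situations. The forward implication is immediate: if $p_1,p_2,p_3$ are weakly coprime, then Proposition \ref{p.excepCharac} gives $x_M=0$. So the content is the reverse implication, which I would prove in contrapositive form: assuming at most one of $p_1,p_2,p_3$ is even and that they are \emph{not} weakly coprime, I would produce a labelling $\{i,j,k\}=\{1,2,3\}$ satisfying the hypotheses of Proposition \ref{p.excepCharac2}, whence $x_M>0$.

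First I would record the arithmetic consequence of the parity hypothesis. Since at most one $p_i$ is even, no two of them are simultaneously even, so every pairwise gcd $\gcd(p_a,p_b)$ is odd. In particular, any pairwise gcd that exceeds $1$ is automatically $\geq 3$, hence $>2$, and being odd it is never equal to $4$. This last point is what lets me discharge the side condition ``$d\neq 4$ or $s\neq 4$'' in Proposition \ref{p.excepCharac2} for free.

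Next I would unwind the definition of weak coprimality. Failing to be weakly coprime means no $p_i$ is coprime to both of the others; equivalently, each index occurs in some pairwise gcd that is $>1$. A short case check then shows that at least two of the three pairwise gcds $\gcd(p_1,p_2)$, $\gcd(p_1,p_3)$, $\gcd(p_2,p_3)$ must exceed $1$: if only one did, the index not appearing in it would be coprime to the other two, contradicting the failure of weak coprimality. Since any two of the three pairs share a common index (two $2$-element subsets of $\{1,2,3\}$ always meet), I may take this common index $a$ and write the two nontrivial gcds as $\gcd(p_a,p_b)$ and $\gcd(p_a,p_c)$, where $\{a,b,c\}=\{1,2,3\}$.

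Finally I would verify the hypotheses of Proposition \ref{p.excepCharac2} with $(i,j,k)=(a,b,c)$. Here $d=\gcd(p_a,p_b)\geq 3>2$; and since $d\mid p_b$ we have $p_a\mid p_ap_b/d$, so the factor $\gcd(p_a,p_c)\geq 3$ divides both $p_ap_b/d$ and $p_c$, hence divides $s=\gcd(p_ap_b/d,\,p_c)$, forcing $s\geq 3>2$. As $d$ is odd we have $d\neq 4$, so the side condition holds and Proposition \ref{p.excepCharac2} yields $x_M>0$, completing the contrapositive. I do not anticipate a serious obstacle; the only delicate points are the combinatorial observation that the two nontrivial gcds always share an index (automatic for three indices) and the divisibility estimate $\gcd(p_a,p_c)\mid s$.
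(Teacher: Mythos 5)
Your proof is correct and follows essentially the same route as the paper's: Proposition \ref{p.excepCharac} handles the weakly coprime direction, and for the converse you make the same construction $d=\gcd(p_a,p_b)$, $s=\gcd(p_ap_b/d,\,p_c)$ for a common index $a$, with the parity hypothesis forcing these gcds to be odd (hence $>2$ and $\neq 4$) so that Proposition \ref{p.excepCharac2} applies. The details you spell out --- that failure of weak coprimality yields two nontrivial pairwise gcds sharing an index, and that $\gcd(p_a,p_c)\mid s$ --- are precisely the steps the paper leaves implicit in its ``without loss of generality'' and ``$s>1$'' claims.
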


\begin{proof} The implication ($\Rightarrow$) is given by Proposition \ref{p.excepCharac}. To prove the other implication, assume that $p_1,p_2,p_3$ are not weakly coprime. Without loss of generality, we  assume that
$p_1$ is coprime with neither $p_2$ nor $p_3.$ Since at most one of $p_1,p_2,p_3$ is even, $d:=gcd(p_1,p_2)>2$. 
Let $s:=gcd(p_1p_2/d,p_3)$. Since $d$ divides $p_2$, and $p_1,p_3$ are not coprime, we also have $s>1$. And since  
at most one of $p_1,p_2,p_3$ is even, $s>2$ and neither $d$ nor $s$ is $4$. Hence, $x_M\ne 0$ by Proposition \ref{p.excepCharac2}.
\end{proof}

One may show that when more than one $p_i$ is even then the vanishing of $x_M$ depends on $p_1,p_2,p_3$ and on the parities of $q_1,q_2,q_3$ only.

%%%%%%%%%%%%%%%%%%%%%%%%%%%%%%%%%%%%%%%%%%%%%
\subsection{Irreducible characters}
\label{ss.irred}
We will denote by $X^{irr}(M)$ the set of irreducible $\slC$-characters of $M.$

\begin{proposition}\label{prop:irredCharac}
	We have $$|X^{irr}(M)|=p_1^+p_2^+p_3^+ + p_1^-p_2^-p_3^- -x_M,$$
	where $p_i^+=\lceil \frac{p_i}{2}\rceil-1$ and $p_i^-=\lfloor \frac{p_i}{2} \rfloor,$ and $x_M$ is as above. 
\end{proposition}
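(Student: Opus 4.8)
The plan is to enumerate irreducible characters by first pinning down the value of $\rho$ on the central element $h$, and then reducing the count to the well-understood character variety of a rank-two free group, with the reducible locus accounting for $x_M$.

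Since $[c_i,h]=1$ in the presentation \eqref{eq:pres}, the element $h$ is central in $\pi_1(M)$; hence for an irreducible $\rho\in R(M)$ Schur's Lemma forces $\rho(h)=\eta I$ with $\eta\in\{\pm1\}$ (as $\det\rho(h)=1$). The relations $c_i^{p_i}h^{q_i}=1$ then give $\rho(c_i)^{p_i}=\eta^{q_i}I$, so each $\rho(c_i)$ has finite order dividing $2p_i$ and is diagonalizable, with eigenvalues $\lambda_i,\lambda_i^{-1}$ satisfying $\lambda_i^{p_i}=\eta^{q_i}$. Irreducibility forces $\rho(c_i)\neq\pm I$ for every $i$: if some $\rho(c_i)=\pm I$, then $c_1c_2c_3=1$ makes the image generated by a single one of the remaining matrices together with $\pm I$, hence abelian and reducible. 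Thus the conjugacy class of $\rho(c_i)$ is recorded by a trace $t_i=\lambda_i+\lambda_i^{-1}\neq\pm2$.

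Next I would count, for each fixed $\eta$, the admissible traces $t_i$. The solutions of $\lambda^{p_i}=\eta^{q_i}$ come in inverse pairs $\{\lambda,\lambda^{-1}\}$ with equal trace, the only self-paired solutions being $\lambda=\pm1$, which are excluded. A short case analysis on the parities of $p_i,q_i$ (using $\gcd(p_i,q_i)=1$, so that one checks whether $1$ and/or $-1$ occurs among the $p_i$ roots and removes it) shows the number of admissible traces is exactly $p_i^+=\lceil p_i/2\rceil-1$ when $\eta=1$ and exactly $p_i^-=\lfloor p_i/2\rfloor$ when $\eta=-1$. As the three coordinates are chosen independently, the number of admissible data $(\eta,t_1,t_2,t_3)$ is $p_1^+p_2^+p_3^++p_1^-p_2^-p_3^-$.

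The key reduction is that each admissible datum determines a unique character. By the classical Fricke parametrization, a character of $\langle c_1,c_2\rangle$ is uniquely determined by $(\mathrm{tr}\,\rho(c_1),\mathrm{tr}\,\rho(c_2),\mathrm{tr}\,\rho(c_1c_2))$, and every triple in $\C^3$ occurs; since $c_3=(c_1c_2)^{-1}$ gives $\mathrm{tr}\,\rho(c_3)=\mathrm{tr}\,\rho(c_1c_2)$, prescribing the boundary traces $(t_1,t_2,t_3)$ determines one character, which satisfies all relations of \eqref{eq:pres} once we set $\rho(h)=\eta I$ (the condition $t_i\neq\pm2$ makes $\rho(c_i)$ semisimple with the required $p_i$-th power). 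This character is irreducible precisely when it avoids the reducible locus $t_1^2+t_2^2+t_3^2-t_1t_2t_3-4=0$, and a reducible such character is exactly an abelian character with $\rho(h)=\pm I$ and $\rho(c_i)\neq\pm I$, i.e.\ an exceptional character in the sense of Definition \ref{def:excepCharac}. Conversely every exceptional abelian character arises from admissible data, and distinct characters give distinct data, since a character is determined by $(t_1,t_2,t_3)$ and $\eta$. Hence the admissible data partition into $|X^{irr}(M)|$ irreducible characters and $x_M$ exceptional abelian ones, giving $|X^{irr}(M)|=p_1^+p_2^+p_3^++p_1^-p_2^-p_3^--x_M$.

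The main obstacle I anticipate is the bookkeeping in the last step: verifying that the passage from admissible data to characters is a genuine bijection onto the characters with $\rho(h)=\pm I$ and $\rho(c_i)\neq\pm I$, and that its reducible part matches $x_M$ exactly, including the $\varphi\sim\varphi^{-1}$ identification implicit in the definition of $x_M$ (see Proposition \ref{prop:nbExcepCharac}). The uniform parity count producing $p_i^+$ and $p_i^-$ is routine but must be carried out carefully across all parities of $(p_i,q_i)$.
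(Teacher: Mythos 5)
Your proof is correct and takes essentially the same route as the paper's: centrality of $h$ forces $\rho(h)=\pm I$, finite order plus irreducibility excludes $\chi(c_i)=\pm 2$, the trace (Fricke) parametrization of characters of the rank-two free group $\langle c_1,c_2,c_3\mid c_1c_2c_3=1\rangle$ gives a bijection between the admissible trace data and the characters with $\chi(h)=\pm 2$ and $\chi(c_i)\neq\pm 2$, and the exceptional abelian characters supply exactly the $-x_M$ correction. The only cosmetic difference is that you detect reducibility via the discriminant $t_1^2+t_2^2+t_3^2-t_1t_2t_3-4$, whereas the paper simply observes that each such character is either irreducible or exceptional abelian; both arguments also rely on the same parity bookkeeping (via $\gcd(p_i,q_i)=1$, so that $p_i^+=p_i^-$ when $q_i$ is even) to get the uniform count $p_1^-p_2^-p_3^-$ for $\chi(h)=-2$.
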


\begin{proof}
Let $X_{2}(M)$, $X_{-2}(M)$ 
denote the set of characters 
$\chi\in X(M)$, where $\chi(c_i)\neq \pm 2$, (for $i=1,2,3$), and
$\chi(h)=2$, $\chi(h)=-2$, respectively.
Since $h$ is central in $\pi_1(M),$ for any irreducible representation $\rho \in R(M)$, we have  $\rho(h)=\pm I$.
Moreover, since $c_i^{p_i}h^{q_i}=1$,  the matrices $\rho(c_i)$ are of finite order. We claim that for an irreducible representation, $\tr (\rho(c_i))\ne \pm 2$. Indeed, if $\tr (\rho(c_i))= \pm 2$, then the finiteness of the order of $\rho(c_i)$ implies that $\rho(c_i)=\pm I$.  Since $\pi_1(M)$ is generated by $h,c_1,c_2$, such $\rho$ must be abelian and thus reducible.

Consequently,
$$X^{irr}(M) = X_2(M)\cup X_{-2}(M) - X_{ea}(M),$$
where $X_{ae}(M)$ is the set of the exceptional abelian characters, and hence
\begin{equation}\label{e.|X^irr|}
|X^{irr}(M)| = |X_2(M)| +|X_{-2}(M)| -x_M.
\end{equation}
For $p\in \Z_{>0}$, set   $$C_p^+:=\{\zeta+\zeta^{-1}\ | \ \zeta^p=1, \zeta\neq \pm 1\},\quad \text{and}\quad 
C_p^-:=\{\zeta+\zeta^{-1}\ |\  \zeta^p=-1, \zeta\neq -1\}.$$ 

By the above discussion, there are functions
$$T_+: X_2(M)\to C_{p_1}^+\times C_{p_2}^+\times C_{p_3}^+$$
$$T_-: X_{-2}(M)\to C_{p_1}^{\epsilon_1} \times C_{p_2}^{\epsilon_2} \times C_{p_3}^{\epsilon_3},$$
with $T_{\pm}(\chi)=(\chi(c_1),\chi(c_2),\chi(c_3)),$ where $\epsilon_i=+$ if $q_i$ is even and  $\epsilon_i=-$ otherwise.
Note that 
$|C_{p_i}^+|=p_i^+= \lceil \frac{p_i}{2}\rceil-1$ and $ |C_p^-|=p_i^-=\lfloor \frac{p_i}{2} \rfloor$.

We will show that $T_{\pm}$ are bijections, which implies
$$|X_2(M)|=p_1^+p_2^+p_3^+\ \text{and}\  |X_{-2}(M)|= p_1^-p_2^-p_3^-,$$
and together with Eq. \eqref{e.|X^irr|} completes the proof.

Each $\chi\in X_{2}(M)$ is either irreducible or exceptional abelian. In either case, $\rho(h)=I$ and $\chi$ is determined by its values
on $F_2=\langle c_1,c_2,c_3 |c_1c_2c_3=1 \rangle$. Since $F_2$ is free of rank two,
$\chi$ is determined by $
(\chi(c_1),\chi(c_2),\chi(c_3))\in \C^3$. Thus $T_+$ is $1$-$1$. Furthermore, each such value in $\C^3$ determines a character of $F_2$. In particular, each value in $C_{p_1}^+\times C_{p_2}^+\times C_{p_3}^+$ corresponds to a character of
$\langle c_1,c_2,c_3 | c_1^{p_1}=c_2^{p_2}=c_3^{p_3}= c_1c_2c_3=1 \rangle$. Thus $T_+$ is onto.
The same argument shows that $T_-$ is a bijection.

\end{proof}

%%%%%%%%%%%%%%%%%%%%%%%%%%%%%%%%%%%%%%%%%%%%%
\subsection{Reducedness}
%%%%%%%%%%%%%%%%%%%%%%%%%%%%%%%%%%%%%%%%%%%%%
In this subsection we investigate the reduced points of the character varieties  $\cX(M)$. For $\rho \in R(M)$,  let $\mathrm{Ad}\, \rho: \pi_1(M)\to GL(sl_2)$ be the representation of $\pi_1(M)$ on the Lie algebra $sl_2(\C)$ induced by $\rho$ by conjugation.  We recall that 
$$H^1(M,\mathrm{Ad}\, \rho)=Z^1(M,\mathrm{Ad}\, \rho)/B^1(M,\mathrm{Ad}\, \rho),$$ 
where $Z^1(M,\mathrm{Ad}\, \rho)$ is the set of all maps $\varepsilon:\pi_1(M)\longrightarrow sl_2(\C)$ such that:
\begin{equation}\label{cocycle}
 \varepsilon(xy)=\varepsilon(x)+\mathrm{Ad}\, \rho(x) \varepsilon(y), \  \ \text{for all}\ x,y\in \pi_1(M).
\end{equation}	
and $B^1(M,\mathrm{Ad}\, \rho)$ is the subspace of consisting of maps
$$\varepsilon_A(x)=A-\mathrm{Ad}(\rho(x))\cdot A \ \text{for}\ x\in \pi_1(M),$$
for all $A\in sl_2(\C).$

Note that the cocycle condition \eqref{cocycle} implies that
\begin{equation}\label{e.cocycle-n} \varepsilon(x^n)= 
 ( I+\mathrm{Ad}\, \rho(x)+ \ldots + \mathrm{Ad}\, \rho(x^{n-1})) \varepsilon(x)
 \end{equation}
for any $x \in \pi_1(M)$ and for any $n>0$ by induction on $n$.  For the identity element $1\in \pi_1(M)$, since $\varepsilon(1^2)=2\varepsilon(1)$, we get
\begin{equation}\label{e.cocycle-e} \varepsilon(1)=0.
 \end{equation}
 Applying $\varepsilon$ to $x^{-1}\cdot x=1$ implies 
 \begin{equation}\label{e.cocycle-inv} 
 \varepsilon(x^{-1})=-\mathrm{Ad}\, \rho(x^{-1})\varepsilon(x).
  \end{equation}
The rest of the subsection is devoted to proving the following the following:
\begin{theorem}\label{thm:reduced}
	If a Seifert manifold $M=M(\frac{q_1}{p_1}, \frac{q_2}{p_2}, \frac{q_3}{p_3})$ with  $e(M)\neq 0$ has no exceptional abelian characters, then $\cX(M)$ is reduced.
\end{theorem}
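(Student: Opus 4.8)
The plan is to establish reducedness pointwise: I will show that for every semisimple $\rho\in R(M)=\mathrm{Hom}(\pi_1(M),\slC)$ one has $H^1(M,\mathrm{Ad}\,\rho)=0$, and then deduce that $\cX(M)$ is a reduced point at each character $\chi_\rho$. The deduction uses $T_\rho R(M)=Z^1(M,\mathrm{Ad}\,\rho)$ and $T_\rho(\slC\cdot\rho)=B^1(M,\mathrm{Ad}\,\rho)$, together with $\dim B^1=3-\dim H^0(M,\mathrm{Ad}\,\rho)=\dim(\slC\cdot\rho)$: once $H^1(M,\mathrm{Ad}\,\rho)=0$ we get $\dim Z^1=\dim(\slC\cdot\rho)$, so $R(M)$ is smooth at $\rho$ and coincides with its orbit locally, and Luna's étale slice theorem identifies a neighbourhood of $\chi_\rho$ in $\cX(M)$ with $H^1(M,\mathrm{Ad}\,\rho)/\!\!/\mathrm{Stab}(\rho)$, a reduced point. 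Because every closed point of $\cX(M)$ is the character of a representation that is either irreducible or conjugate to a diagonal one, and because $e(M)\neq 0$ forces $b_1(M)=0$ and $X(M)$ finite (Theorem \ref{thm:HakenSFScharac}), it suffices to prove the vanishing in these two cases.

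In the irreducible case I would first exploit that $h$ is central, so $\rho(h)=\pm I$, whence $\mathrm{Ad}\,\rho(h)=\mathrm{Id}$ and $\mathrm{Ad}\,\rho(c_i)^{p_i}=\mathrm{Id}$. Feeding $[c_i,h]=1$ into the cocycle relation \eqref{cocycle} gives $(\mathrm{Id}-\mathrm{Ad}\,\rho(c_i))\varepsilon(h)=0$, so $\varepsilon(h)$ lies on the axis $\ell_i:=\ker(\mathrm{Id}-\mathrm{Ad}\,\rho(c_i))$; irreducibility makes $\ell_1\neq\ell_2$, forcing $\varepsilon(h)=0$. Then, by \eqref{e.cocycle-n}, each relation $c_i^{p_i}h^{q_i}=1$ collapses to $\big(\sum_{j=0}^{p_i-1}\mathrm{Ad}\,\rho(c_i)^{j}\big)\varepsilon(c_i)=0$, i.e. $\varepsilon(c_i)\in W_i:=\mathrm{Im}(\mathrm{Id}-\mathrm{Ad}\,\rho(c_i))=\ell_i^{\perp}$, a $2$-plane. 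Expressing $\varepsilon(c_3)$ through $\varepsilon(c_1),\varepsilon(c_2)$ via $c_3=(c_1c_2)^{-1}$, the final relation becomes the single condition $\varepsilon(c_3)\in W_3$; this is transverse because $W_1+W_2=sl_2(\C)$ (distinct $2$-planes in a $3$-space), so the induced map $W_1\oplus W_2\to sl_2(\C)/W_3$ is onto. Counting dimensions yields $\dim Z^1=2+2-1=3=\dim B^1$, hence $H^1(M,\mathrm{Ad}\,\rho)=0$.

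In the abelian case, writing $\alpha\colon\pi_1(M)\to\C^*$ for the character giving the diagonal entries of $\rho$, I would decompose $\mathrm{Ad}\,\rho\cong\C_0\oplus\C_{\alpha^2}\oplus\C_{\alpha^{-2}}$ into the Cartan and root summands, so that $H^1(M,\mathrm{Ad}\,\rho)=H^1(M,\C)\oplus H^1(M,\C_{\alpha^2})\oplus H^1(M,\C_{\alpha^{-2}})$. The Cartan term vanishes since $b_1(M)=0$. For the rank-one systems $\C_{\alpha^{\pm2}}$ I would compute $Z^1$ and $B^1$ directly from \eqref{eq:pres}. When $\alpha^2$ is trivial they reduce to $\C$ and vanish as above. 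When $\alpha(h)^2\neq1$, the commutator relations solve $\varepsilon(c_i)$ in terms of $\varepsilon(h)$, the relations $c_i^{p_i}h^{q_i}=1$ turn out automatically satisfied, and $\dim Z^1=\dim B^1=1$. When $\alpha(h)^2=1$ but $\alpha^2$ is nontrivial, the commutator relations force $\varepsilon(h)=0$ and the power relations kill $\varepsilon(c_i)$ exactly for the indices with $\alpha(c_i)=\pm1$; a short count — noting that $k:=\#\{i:\alpha(c_i)\neq\pm1\}$ cannot equal $1$ because $\alpha(c_1)\alpha(c_2)\alpha(c_3)=1$ — shows $H^1(M,\C_{\alpha^2})\neq0$ precisely when $k=3$, i.e. when $\alpha(h)=\pm1$ and $\alpha(c_i)\neq\pm1$ for all $i$, which is exactly the case that $\chi_\rho$ is an exceptional abelian character. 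Since $x_M=0$ excludes this, $H^1(M,\C_{\alpha^{\pm2}})=0$ and thus $H^1(M,\mathrm{Ad}\,\rho)=0$.

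I expect the main obstacle to lie in the reducible case, for two reasons. The stabilizer of a diagonal $\rho$ is positive-dimensional, so the implication ``$H^1=0\Rightarrow$ reduced point of $\cX(M)$'' cannot use the naive zero-tangent-space argument available at irreducible characters and genuinely requires the slice-theoretic description of the GIT quotient. Moreover, the hypothesis $x_M=0$ enters only through the exact determination of when $H^1(M,\C_{\alpha^2})$ is nonzero, so the case analysis on $k\in\{0,2,3\}$ must be carried out precisely: it is exactly $k=3$ that produces a nonzero cohomology class and a potentially non-reduced point, and this is the configuration that ``no exceptional abelian character'' forbids. By comparison, the transversality $W_1+W_2=sl_2(\C)$ and the vanishing $H^1(M,\C)=0$ are routine once the set-up is in place.
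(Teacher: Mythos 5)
Your proposal is correct and its overall architecture is the same as the paper's: reduce to showing $H^1(M,\mathrm{Ad}\,\rho)=0$ at every semisimple $\rho$ (irreducible or diagonal), with the exceptional abelian characters being exactly the points where this fails. The differences are in the execution, and they are real. For the diagonal case the paper works entry-by-entry with the matrices $H=\varepsilon(h)$, $X_i=\varepsilon(c_i)$ in three separate lemmas (Lemmas \ref{lemma:centralReduced}, \ref{lemma:abelianReduced1}, \ref{lemma:excepCharacNonreduced}), whereas you decompose $\mathrm{Ad}\,\rho\cong\C_0\oplus\C_{\alpha^2}\oplus\C_{\alpha^{-2}}$ and compute $H^1$ of each rank-one system; this is secretly the same computation (the paper's parameters $u,x_i$ are your Cartan summand, and $v,y_i$ resp. $w,z_i$ are the two weight summands), but your packaging unifies the cases, makes the use of $e(M)\neq 0$ appear cleanly as $b_1(M)=0$, and isolates exactly the configuration $k=3$ where $H^1(M,\C_{\alpha^{\pm2}})\neq 0$, i.e.\ the exceptional characters — matching the paper's count $\dim H^1=2$ there. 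For the step ``$H^1=0$ at a diagonal $\rho$ implies $\chi_\rho$ reduced,'' the paper cites \cite[Theorem 1]{Si12} and \cite[Lemma 21]{HeuPor}, while you reprove this via smoothness of $R(M)$ at $\rho$ plus Luna's \'etale slice theorem; that is a legitimate (standard) proof of the cited fact, and you correctly identify that the positive-dimensional stabilizer is why the naive tangent-space argument fails. Two small points to tighten in the irreducible case: you need $\rho(c_i)\neq\pm I$ (which follows from irreducibility, as the paper notes) for the $\ell_i$ to be lines; and the transversality you need is $W_1+\mathrm{Ad}\,\rho(c_1)W_2=sl_2(\C)$, not $W_1+W_2=sl_2(\C)$, since the cocycle relation twists $\varepsilon(c_2)$ by $\mathrm{Ad}\,\rho(c_1)$ — the same ``distinct $2$-planes'' argument applies because $\rho(c_1)$ and $\rho(c_1)\rho(c_2)\rho(c_1)^{-1}$ do not commute, so this is an imprecision rather than a gap.
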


We have  the following:

\begin{corollary} If $M$ is a non-Haken Seifert manifold and either $H_1(M,\Z)$ is $2$-torsion or $p_1,p_2,p_3$ are weakly coprime, then $\cX(M)$ is reduced. 
\end{corollary}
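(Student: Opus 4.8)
The plan is to obtain the corollary by combining Theorem~\ref{thm:reduced} with Proposition~\ref{p.excepCharac}, after sorting the manifolds according to the classification recalled at the start of Section~\ref{sec:characters}. There a non-Haken Seifert fibered $M$ is either the reducible manifold $\R\P^3\#\R\P^3$, or fibers over $S^2$ with at most three exceptional fibers and $e(M)\neq 0$, so that $M=M(\frac{q_1}{p_1},\frac{q_2}{p_2},\frac{q_3}{p_3})$. The two hypotheses are phrased for this generic family; for $\R\P^3\#\R\P^3$ only the first, that $H_1(M,\Z)$ is $2$-torsion, is meaningful, and it does hold since $H_1(\R\P^3\#\R\P^3,\Z)\cong(\Z/2)^2$.

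For the generic family the deduction is immediate. By Proposition~\ref{p.excepCharac}, either hypothesis forces $x_M=0$, i.e.\ $M$ has no exceptional abelian characters; this is precisely the hypothesis of Theorem~\ref{thm:reduced}, which therefore yields that $\cX(M)$ is reduced.

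The remaining point is $\R\P^3\#\R\P^3$, which is not covered by Theorem~\ref{thm:reduced} and which I would treat directly. Since $\pi_1(\R\P^3\#\R\P^3)\cong\Z/2\ast\Z/2$ and the only involutions in $\slC$ are $\pm I$, every $\rho\in R(M)$ maps both free factors into the center; hence $R(M)$ consists of the four central representations, each an isolated point of the representation scheme with vanishing Zariski tangent space (the relation $A^2=I$ linearizes to $2B=0$ at $A=\pm I$). Thus the representation scheme is reduced with coordinate ring $\C^4$, and its invariant subring $\C[\cX(M)]$, being a subring of a reduced ring, is reduced as well. Equivalently, $\mathrm{Ad}\,\rho$ is trivial and $H^1(M,\mathrm{Ad}\,\rho)\cong H^1(M;\C)\otimes sl_2(\C)=0$, since $H_1(M,\Z)$ is finite.

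I expect the only genuine subtlety to be this reducible case: Theorem~\ref{thm:reduced} is formulated for base $S^2$ with non-zero Euler number, so $\R\P^3\#\R\P^3$ must be isolated and disposed of on its own. The verification there is short, the heavy lifting having already been carried out in Theorem~\ref{thm:reduced} and Proposition~\ref{p.excepCharac}; what remains is just the bookkeeping supplied by the classification.
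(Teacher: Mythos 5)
Your proposal is correct and takes essentially the same route as the paper: invoke the classification of non-Haken Seifert manifolds recalled at the start of Section~\ref{sec:characters}, use Proposition~\ref{p.excepCharac} to conclude $x_M=0$ under either hypothesis, and then apply Theorem~\ref{thm:reduced}. The only difference is that the paper's two-line proof does not spell out the $\R\P^3\#\R\P^3$ case at all, whereas you verify it directly; your verification is sound (and is also subsumed by Lemma~\ref{lemma:centralReduced}, since $\R\P^3\#\R\P^3$ is a rational homology sphere all of whose $\slC$-representations are central), so you have in fact filled in a detail the paper leaves implicit.
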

\begin{proof} As recalled in the beginning of the section, $M$
 is either $\R\P^3 \# \R\P^3$
or of the form
$M=M(\frac{q_1}{p_1},  \frac{q_2}{p_2}, \frac{q_3}{p_3})$, with $e(M)\neq 0$.
Now the statement follows from Theorem \ref{thm:reduced} and Proposition \ref{p.excepCharac}.
\end{proof}

It is known that each point of $X(M)$ is represented by a completely reducible representation $\rho$ \cite{LM85, Si12}.
In particular, $\rho$ can be taken to be either irreducible or diagonal. Furthermore, the tangent space $T_\rho\, \cX(M)$ at an  irreducible $\rho$ is isomorphic to $H^1(M,\mathrm{Ad}\, \rho)$,  see \cite{LM85}. Similarly, $H^1(M,\mathrm{Ad}\, \rho)=0,$ for a diagonal representation $\rho$ implies that $[\rho]$ is reduced in $\cX(M),$ by \cite[Theorem 1]{Si12} and \cite[Lemma 21]{HeuPor}. Therefore, Theorem \ref{thm:reduced}  follows from Lemmas \ref{lemma:centralReduced},  \ref{lemma:irredReduced}, \ref{lemma:abelianReduced1} and \ref{lemma:excepCharacNonreduced} below.

\begin{lemma}\label{lemma:centralReduced}
	Let $M$ be a rational homology sphere, and let $\chi$ be the character of a central representation in  $ R(M)$. Then $\chi$ is isolated in $X(M)$ and $\cX(M)$ is reduced at $\chi.$
\end{lemma}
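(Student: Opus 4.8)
The idea is that for a central $\rho$ the adjoint representation $\mathrm{Ad}\,\rho$ is trivial, which collapses the cocycle equation and lets the hypothesis that $M$ is a rational homology sphere do all the work. First I would note that $\rho(\pi_1(M))\subseteq\{\pm I\}$ forces $\mathrm{Ad}\,\rho(x)=\mathrm{id}$ on $sl_2(\C)$ for every $x\in\pi_1(M)$, so the cocycle condition \eqref{cocycle} becomes $\varepsilon(xy)=\varepsilon(x)+\varepsilon(y)$. Thus $Z^1(M,\mathrm{Ad}\,\rho)=\mathrm{Hom}(\pi_1(M),sl_2(\C))=\mathrm{Hom}(H_1(M,\Z),sl_2(\C))$. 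Since $M$ is a rational homology sphere, $H_1(M,\Z)$ is finite, whereas $sl_2(\C)$ is torsion-free; hence the only such homomorphism is zero, giving $Z^1(M,\mathrm{Ad}\,\rho)=0$ and, a fortiori, $H^1(M,\mathrm{Ad}\,\rho)=0$.

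Reducedness at $\chi$ is then immediate: a central representation is diagonal, so by the criterion recalled before the lemma (\cite[Theorem 1]{Si12}, \cite[Lemma 21]{HeuPor}) the vanishing $H^1(M,\mathrm{Ad}\,\rho)=0$ shows that $\cX(M)$ is reduced at $\chi$. For isolatedness I would use the stronger statement $Z^1=0$. Since $Z^1(M,\mathrm{Ad}\,\rho)$ is the Zariski tangent space to the representation scheme $R(M)$ at $\rho$, the point $\rho$ is isolated in $R(M)$, i.e.\ $\{\rho\}$ is open. To transport this across the quotient $\pi\colon R(M)\to X(M)$ I would check that $\rho$ is the unique representation with character $\chi$: any $\sigma$ with $\chi_\sigma=\chi$ has all traces equal to $\pm2$; were $\sigma$ irreducible, its image would consist of $\pm$unipotent elements and hence be reducible (by Kolchin's theorem in $PSL_2(\C)$), a contradiction, so $\sigma$ is reducible and conjugate to an upper-triangular representation whose diagonal is a sign homomorphism $s\colon\pi_1(M)\to\{\pm1\}$. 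Dividing the off-diagonal entry by $s$ produces a homomorphism $\pi_1(M)\to(\C,+)$, which vanishes because $H_1(M,\Z)$ is finite; thus $\sigma=sI$ is central and, comparing characters, $\sigma=\rho$. Hence $\pi^{-1}(\chi)=\{\rho\}$ is open, and as $\pi$ is a good quotient (submersive for the Zariski topology) the point $\chi$ is isolated in $X(M)$.

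The cohomology computation is routine; the substantive point is the isolatedness, where one must rule out every other representation sharing the central character $\chi$ before the openness of $\{\rho\}$ in $R(M)$ can be pushed down to $X(M)$. The reducible representations are eliminated by exactly the same finiteness of $H_1(M,\Z)$ that gives the cohomology vanishing, and the irreducible ones by Kolchin's theorem, so I expect this uniqueness step — rather than any dimension estimate — to be the main obstacle.
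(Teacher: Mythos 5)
Your proof is correct, and its first half is exactly the paper's argument: the paper's entire proof consists of the observation that $\mathrm{Ad}\,\rho$ is trivial, so that $H^1(M,\mathrm{Ad}\,\rho)\cong H^1(M,\C)\otimes sl_2(\C)=0$ for a rational homology sphere, together with the criterion recalled just before the lemma (\cite[Theorem 1]{Si12}, \cite[Lemma 21]{HeuPor}). Your computation $Z^1(M,\mathrm{Ad}\,\rho)=\mathrm{Hom}(H_1(M,\Z),sl_2(\C))=0$ is the same vanishing, phrased via finiteness of $H_1(M,\Z)$ and torsion-freeness of $sl_2(\C)$. Where you genuinely diverge is the isolation claim: the paper extracts both conclusions (isolated and reduced) from $H^1=0$ through the cited scheme-theoretic machinery for completely reducible representations, whereas you prove isolation by hand --- using $Z^1=0$ to see that $\{\rho\}$ is open in the representation scheme $R(M)$, showing that $\rho$ is the \emph{unique} representation with character $\chi$ (Kolchin's theorem to rule out irreducibles, finiteness of $H_1(M,\Z)$ to kill the off-diagonal part of a reducible one), and pushing openness down through the submersive good quotient $R(M)\to X(M)$. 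Both routes are valid. Yours is more self-contained, since the reader need not check that the results of \cite{Si12} and \cite{HeuPor} deliver isolation as well as reducedness, and it exploits the stronger fact, special to central representations, that $Z^1$ (not merely $H^1$) vanishes. The price is the extra uniqueness argument, which --- contrary to your closing remark --- is not the essential obstacle in the paper's framework: once the point is known to be reduced with zero Zariski tangent space, isolation is automatic.
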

\begin{proof}
	Since $\rho$ is a central representation, $\mathrm{Ad}\, \rho$ is a trivial representation, and $H^1(M,\mathrm{Ad}\, \rho)$ is isomorphic to $H^1(M,\C)\otimes sl_2(\C) =0$ since $M$ is a rational homology sphere.
\end{proof}

Next we consider irreducible characters in $\cX(M)$, for  $M=M(\frac{q_1}{p_1}, \frac{q_2}{p_2}, \frac{q_3}{p_3})$.

\begin{lemma}\label{lemma:irredReduced}
	The irreducible  characters are  isolated and reduced in $\cX(M)$.	
\end{lemma}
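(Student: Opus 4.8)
The plan is to reduce the statement, as the text already sets up, to the vanishing $H^1(M,\mathrm{Ad}\,\rho)=0$ for every irreducible $\rho\in R(M)$: since the Zariski tangent space $T_\rho\,\cX(M)\cong H^1(M,\mathrm{Ad}\,\rho)$, a vanishing $H^1$ forces $[\rho]$ to be a reduced isolated point (a scheme point whose tangent space is $0$ has local ring $\C$). For irreducible $\rho$ we have $\rho(h)=\pm I$, so $\mathrm{Ad}\,\rho(h)=\mathrm{id}$, and Schur's lemma gives $H^0(M,\mathrm{Ad}\,\rho)=(sl_2(\C))^{\pi_1(M)}=0$; hence $\dim B^1(M,\mathrm{Ad}\,\rho)=3$. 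It therefore suffices to show $\dim Z^1(M,\mathrm{Ad}\,\rho)=3$.

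Write $C_i:=\mathrm{Ad}\,\rho(c_i)$. A cocycle $\varepsilon$ is determined by $\varepsilon(c_1),\varepsilon(c_2),\varepsilon(h)$. First I would feed the commutation relations $[c_i,h]=1$ into \eqref{cocycle}; using $\mathrm{Ad}\,\rho(h)=\mathrm{id}$ this collapses to $(C_i-I)\varepsilon(h)=0$ for $i=1,2,3$. Since $\rho$ is irreducible no two of $\rho(c_1),\rho(c_2),\rho(c_3)$ commute (two commuting would force all three to commute, as $c_3=(c_1c_2)^{-1}$, making $\rho$ reducible), so the fixed lines $\C X_i=\ker(C_i-I)$ are pairwise distinct and $\varepsilon(h)\in\bigcap_i\C X_i=0$. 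Next I would apply $\varepsilon$ to $c_i^{p_i}h^{q_i}=1$; by \eqref{e.cocycle-n}, $\rho(c_i)^{p_i}=\pm I$, and $\varepsilon(h)=0$, this becomes $N_i\,\varepsilon(c_i)=0$ with $N_i:=\sum_{k=0}^{p_i-1}C_i^k$. Diagonalizing $\rho(c_i)$ with eigenvalues $\zeta_i^{\pm1}$, $\zeta_i\neq\pm1$, the operator $C_i$ has eigenvalues $\zeta_i^{2},1,\zeta_i^{-2}$, and $N_i$ acts by $p_i\neq0$ on $\C X_i$ and by $0$ on the $\zeta_i^{\pm2}$-eigenspaces; hence $\ker N_i=W_i$, the $2$-dimensional $B$-orthogonal complement of $X_i$ for $B(X,Y)=\tr(XY)$.

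With $\varepsilon(h)=0$ and $\varepsilon(c_i)\in W_i$, the space $Z^1$ is the set of pairs $(\varepsilon(c_1),\varepsilon(c_2))\in W_1\times W_2$ (a $4$-dimensional space, $\varepsilon(c_3)$ being determined by $c_1c_2c_3=1$) cut out by the single remaining relation $c_3^{p_3}h^{q_3}=1$, i.e. $\varepsilon(c_3)\in W_3$. From the cocycle rule and \eqref{e.cocycle-inv} one computes $\varepsilon(c_3)=-C_3\,\varepsilon(c_1)-C_2^{-1}\varepsilon(c_2)$, so this is the one linear equation $\ell(\varepsilon(c_1),\varepsilon(c_2)):=\mathrm{pr}_{X_3}\bigl(\varepsilon(c_3)\bigr)=0$, where $\mathrm{pr}_{X_3}$ is the projection onto $\C X_3$ with kernel $W_3$. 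Thus $\dim Z^1\in\{3,4\}$, and equals $3$ precisely when $\ell\not\equiv0$.

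The crux is to rule out $\ell\equiv0$. Since the two arguments vary independently and each $W_i$ is $C_i$-invariant, $\ell\equiv0$ would force $C_3(W_1)\subseteq W_3$ and $C_2^{-1}(W_2)\subseteq W_3$, hence $W_1=W_2=W_3$; taking orthogonal complements then gives $\C X_1=\C X_2=\C X_3$, so all $\rho(c_i)$ commute and $\rho$ is reducible---a contradiction. Therefore $\ell\not\equiv0$, $\dim Z^1=3=\dim B^1$, and $H^1(M,\mathrm{Ad}\,\rho)=0$, as required. Everything outside this last step is routine bookkeeping with the cocycle identities \eqref{e.cocycle-n} and \eqref{e.cocycle-inv}; the one genuine input is that irreducibility makes the invariant lines $X_1,X_2,X_3$ non-parallel, which is exactly what makes the closing-up constraint $\varepsilon(c_3)\in W_3$ nontrivial.
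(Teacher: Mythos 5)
Your proof is correct and follows essentially the same route as the paper's: reduce to $H^1(M,\mathrm{Ad}\,\rho)=0$, note $\dim B^1=3$ by irreducibility, kill $\varepsilon(h)$ via the commutation relations, constrain each $\varepsilon(c_i)$ to the $2$-dimensional space cut out by $\tr(\rho(c_i)\varepsilon(c_i))=0$ (your $W_i$ is exactly this condition), and show the remaining relation imposes a nontrivial linear condition on the resulting $4$-dimensional space by specializing one variable to zero and deriving a commutativity contradiction with irreducibility. The only differences are cosmetic: you eliminate $\varepsilon(c_3)$ where the paper eliminates $X_1$, and your argument via $C_i$-invariance of the $W_i$ replaces the paper's appeal to injectivity of $A\mapsto\tr(A\,\cdot)$, both yielding the same conclusion that the axes of the $\rho(c_i)$ would coincide, forcing $\rho$ abelian.
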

\begin{proof}
The proof is similar to the argument in \cite[Lemma 2.4]{BC06}:
Since $\rho$ is irreducible, $B^1(M,\mathrm{\rho})$ is of dimension $3,$ so we only need to show that $Z^1(M,\mathrm{Ad}\, \rho)=3$.  We use again the presentation of $\pi_1(M)$ given in Equation  \ref{eq:pres}. Since $h,c_1,c_2,c_3$ generate $\pi_1(M)$, the cocycle condition implies that any $\varepsilon\in Z^1(M,\mathrm{Ad}\, \rho)$ is determined by $H:=\varepsilon(h)$ and $X_i:=\varepsilon(c_i),$ for $i=1,2,3$.
Furthermore,  by \eqref{e.cocycle-inv} 
 we obtain
$\varepsilon(c_i^{-1})= -\mathrm{Ad}\, \rho(c_i^{-1}) \varepsilon(c_i)$.
Hence applying  $\varepsilon$ to the relations $[c_i,h]=1$,
and utilizing properties \eqref{cocycle}, \eqref{e.cocycle-e}, \eqref{e.cocycle-inv}, we obtain
\begin{equation}\label{commute}
	X_i+\mathrm{Ad}\, \rho(c_i)H-\mathrm{Ad}\, \rho(c_ihc_i^{-1})X_i-\mathrm{Ad}\, \rho(c_ihc_i^{-1}h^{-1})H=0.
\end{equation}
Since $\rho$ is irreducible and $\rho(h)$ commutes with $\rho(\pi_1(M))$, we have $\rho(h)=\pm I$ and the last equation reduces to
$$\mathrm{Ad}\, \rho(c_i)H-H=0,$$ 
implying  that $H$ commutes with $\rho(c_i)$ for $i=1,2,3.$ 
Furthermore, since $\rho$ is irreducible, $H$ must be a scalar in $sl_2$ and, hence, 
\begin{equation} \label{e.H0}
H=0.
\end{equation}

By applying $\varepsilon$ to the relation $c_i^{p_i}h^{q_i}=1$, we obtain
$$\varepsilon(c_i^{p_i}) + \mathrm{Ad}\, \rho(c_i^{p_i}) \varepsilon(h^{q_i})=0,$$

which by \eqref{e.cocycle-n} implies
%\norefnames
\begin{equation} \label{powers1}
\left(I+\mathrm{Ad}\, \rho(c_i)+\ldots +\mathrm{Ad}\, \rho(c_i^{p_i-1})\right) X_i =-\mathrm{Ad}\, \rho(c_i^{p_i})\left(I+\ldots +\mathrm{Ad}\, \rho(h^{q_i-1})\right) H, 
\end{equation}
%\showrefnames
and by \eqref{e.H0},
\begin{equation} \label{powers2}
\left(I+\mathrm{Ad}\, \rho(c_i)+\ldots +\mathrm{Ad}\, \rho(c_i^{p_i-1})\right) X_i =0. 
\end{equation}
Since $\rho(c_i)$ is of finite order for each $i$, it is diagonalizable with eigenvalues $\zeta_i,\zeta^{-1}_i$ such that $\zeta_i^{p_i}=(\pm 1)^{q_i}$. 
We note that $\zeta_i\neq \pm 1$ for each $i$. Indeed, if say $\zeta_1=\pm 1$, then the image of $\rho$ is contained in the set 
of powers of $\pm \rho(c_2)$ implying that $\rho$ is abelian.
 In a basis of a diagonalization of $\rho(c_i),$ $\mathrm{Ad}\, \rho(c_i)$ sends  
$X_i=\begin{pmatrix}
	x_i & y_i \\ z_i & -x_i
\end{pmatrix}\in sl_2(\C)$
to 
$\begin{pmatrix}
 x_i & \zeta_i^2y_i  \\ \zeta_i^{-2}z_i & -x_i
\end{pmatrix}$, for $i=1,2,3$.
Since $\zeta_i^{2p_i}=1$, we get
$1+\zeta_i^2+\ldots + \zeta_i^{2p_i-2}=0$, which implies that
that $\left(I+\mathrm{Ad}\, \rho(c_i)+\ldots +\mathrm{Ad}\, \rho(c_i^{p_i-1})\right) X_i$ is diagonal. 
Therefore, 
Equation \eqref{powers2} is satisfied if and only if $X_i$ has diagonal zero, which is equivalent to $\tr(\rho(c_i)X_i)=0$.

Finally, applying $\varepsilon$ to the last relation $c_1c_2c_3=1$, we obtain
\begin{equation} \label{last}
	X_1+\mathrm{Ad}\, \rho(c_1)(X_2)+ \mathrm{Ad}\, \rho(c_1c_2)(X_3)=0.
\end{equation}
We can now describe the space of solutions to the last equation. Given $X_2,X_3\in sl_2(\C)$ such that 
\begin{equation} \label{e.cX}
\tr (\rho(c_2)X_2)=0=\tr (\rho(c_3)X_3),
\end{equation}
  we have
$$X_1=-\mathrm{Ad}\, \rho(c_1)(X_2)- \mathrm{Ad}\, \rho(c_1c_2)(X_3).$$
Note that since $\rho(c_2),\rho(c_3)\neq \pm I$, the space of solutions $(X_2, X_3)$ of Eq. \eqref{e.cX} is $4$-dimensional. 
However, we claim  that the space $Z^1(M,\mathrm{Ad}\, \rho)$ has dimension at most 3  implying $H^1(M,\mathrm{Ad}\, \rho)=0$ as desired.

To show the last claim it is enough to prove that the condition $\tr (\rho(c_1)X_1)=0$ does not hold for all solutions $(X_2, X_3)\in sl_2(\C)^2$ of Eq. \eqref{e.cX}.
Suppose otherwise, for a moment, and take  $X_3=0$.
Then, $X_1=-\mathrm{Ad}\, \rho(c_1)(X_2)$ and $\tr (\rho(c_1)X_1)=0$ becomes $\tr (\rho(c_1)X_2)=0$.
Since the map $M_2(\C)\to M_2(\C)^*$ sending $A$ to $(B\to Tr(AB))$ is injective, the above implies that $\rho(c_1)$ is
a linear combination of $\rho(c_2)$ and the identity matrix. Consequently, $\rho(c_1)$ and $\rho(c_2)$ commute and, hence, $\rho(c_3)=\rho(c_2^{-1}c_1^{-1})$ commutes with them as well. Since $\rho(h)$ commutes with $\rho(c_i)$'s, this  would imply that $\rho$ is abelian, and, hence, not irreducible. This contradiction finishes the proof of the claim and the lemma.
\end{proof}

Next we will  study reducedness of abelian non-central characters. We first look at characters such that $\chi(h)\neq \pm 2.$

\begin{lemma}\label{lemma:abelianReduced1}
If  $\rho \in R(M)$ is diagonal with $\rho(h)\neq \pm I$, then $H^1(M,\mathrm{Ad}\, \rho)=0$ and $\chi_{\rho}$ is reduced in $\cX(M)$.
\end{lemma}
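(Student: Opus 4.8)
The plan is to use the fact that a diagonal $\rho$ makes $\mathrm{Ad}\,\rho$ completely reducible, so that $H^1(M,\mathrm{Ad}\,\rho)$ splits as a sum of three cohomology groups with one-dimensional coefficients, each of which can be computed by hand. First I would write $\rho(g)=\mathrm{Diag}(\mu(g),\mu(g)^{-1})$ for a homomorphism $\mu\colon\pi_1(M)\to\C^*$. In the basis $E=\left(\begin{smallmatrix}0&1\\0&0\end{smallmatrix}\right)$, $F=\left(\begin{smallmatrix}0&0\\1&0\end{smallmatrix}\right)$, $D=\left(\begin{smallmatrix}1&0\\0&-1\end{smallmatrix}\right)$ of $sl_2(\C)$, the operator $\mathrm{Ad}\,\rho(g)=\rho(g)(\,\cdot\,)\rho(g)^{-1}$ is diagonal with eigenvalues $\mu(g)^2$, $\mu(g)^{-2}$, and $1$ respectively. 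Writing $\C_\chi$ for the one-dimensional $\pi_1(M)$-module on which $g$ acts by a character $\chi$, this says $\mathrm{Ad}\,\rho\cong \C_{\mu^2}\oplus\C\oplus\C_{\mu^{-2}}$ as $\pi_1(M)$-modules, whence
\[
H^1(M,\mathrm{Ad}\,\rho)\;\cong\;H^1(\pi_1(M),\C_{\mu^2})\oplus H^1(\pi_1(M),\C)\oplus H^1(\pi_1(M),\C_{\mu^{-2}}).
\]

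For the trivial middle summand, $H^1(\pi_1(M),\C)=\mathrm{Hom}(\pi_1(M),\C)=\mathrm{Hom}(H_1(M,\Z),\C)$, which vanishes because $M$ is a rational homology sphere (as $e(M)\neq 0$, by Lemma \ref{lemma:abelianChar}, $H_1(M,\Z)$ is finite). For the two outer summands, set $\chi=\mu^{\pm 2}$; the crucial input is that $\rho(h)\neq\pm I$ gives $\mu(h)^2\neq 1$, so that $\chi(h)\neq 1$ in both cases. Here I would exploit that $h$ is central: for any cocycle $\varepsilon\colon\pi_1(M)\to\C_\chi$ and any $g\in\pi_1(M)$, expanding $\varepsilon(hg)$ and $\varepsilon(gh)$ by the cocycle rule \eqref{cocycle} and equating (since $hg=gh$) yields
\[
(\chi(h)-1)\,\varepsilon(g)=(\chi(g)-1)\,\varepsilon(h).
\]
As $\chi(h)\neq 1$, this gives $\varepsilon(g)=c\,(\chi(g)-1)$ with $c=\varepsilon(h)/(\chi(h)-1)$, which is exactly the coboundary $\varepsilon_A$ of the paper with $A=-c$. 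Hence every cocycle is a coboundary and $H^1(\pi_1(M),\C_{\mu^{\pm 2}})=0$.

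Combining the three vanishing statements gives $H^1(M,\mathrm{Ad}\,\rho)=0$, and reducedness of $\cX(M)$ at $\chi_\rho$ then follows immediately from the criterion for diagonal representations recalled before Theorem \ref{thm:reduced} (via \cite[Theorem 1]{Si12} and \cite[Lemma 21]{HeuPor}). I do not expect a genuine obstacle: the only point requiring care is the centrality computation for the nontrivial weights, and the hypothesis $\rho(h)\neq\pm I$ is precisely what guarantees $\chi(h)\neq1$, making that argument go through; the remaining steps are the weight-space bookkeeping and the rational-homology-sphere vanishing.
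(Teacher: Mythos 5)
Your proposal is correct, but it takes a genuinely different route from the paper's. The paper works directly with the presentation \eqref{eq:pres}: it writes a cocycle via the matrices $H=\varepsilon(h)$, $X_i=\varepsilon(c_i)$ and solves, entry by entry, the linear constraints coming from the relations; the off-diagonal entries of the $X_i$ are forced to be determined by the off-diagonal entries $v,w$ of $H$, while the diagonal entries give $x_i=-\frac{q_i}{p_i}u$ and then $-e(M)u=0$, so that $\dim Z^1(M,\mathrm{Ad}\,\rho)=2=\dim B^1(M,\mathrm{Ad}\,\rho)$. You instead split $\mathrm{Ad}\,\rho\cong\C_{\mu^2}\oplus\C\oplus\C_{\mu^{-2}}$ and treat each weight separately: the nontrivial weights vanish by the centrality identity $(\chi(h)-1)\varepsilon(g)=(\chi(g)-1)\varepsilon(h)$, which uses no presentation at all, only that $h$ is central and $\chi(h)=\mu(h)^{\pm 2}\neq 1$ --- exactly the hypothesis $\rho(h)\neq \pm I$; the trivial weight vanishes because $H_1(M,\Z)$ is finite, which is equivalent to $e(M)\neq 0$ (Lemma \ref{lemma:abelianChar}). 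Your identification of the resulting cocycle with a coboundary is also right, since for $A$ in the $\chi$-eigenspace one has $\varepsilon_A(g)=A-\mathrm{Ad}(\rho(g))A=(1-\chi(g))A$. What your packaging buys is conceptual clarity and generality: it isolates where each hypothesis enters, it works for any group with a central element on which $\mu^2$ is nontrivial, and it explains structurally why the computation in Lemma \ref{lemma:excepCharacNonreduced} can produce $H^1\neq 0$ --- when $\rho(h)=\pm I$ the weights $\mu^{\pm2}$ are trivial on $h$, the centrality argument collapses, and the off-diagonal directions must be constrained by the remaining relations. What the paper's hands-on computation buys is uniformity: the same matrix bookkeeping is recycled essentially unchanged across Lemmas \ref{lemma:irredReduced}, \ref{lemma:abelianReduced1} and \ref{lemma:excepCharacNonreduced}, including precisely those cases where your weight-space shortcut is unavailable; note also that your trivial-weight step ($p_ix_i+q_iu=0$ and $x_1+x_2+x_3=0$, hence $e(M)u=0$) is the same arithmetic as the paper's, merely repackaged as the vanishing of $\mathrm{Hom}(H_1(M,\Z),\C)$.
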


\begin{proof}
We follow the strategy used  in Lemma \ref{lemma:irredReduced}: This time, since $\rho$ is abelian non-central, $\dim B^1(M,\mathrm{Ad}\, \rho)=2$. Let us compute the dimension of $Z^1(M,\mathrm{Ad}\, \rho):$
	
As before, an element $\varepsilon\in Z^1(M,\mathrm{Ad}\, \rho)$ is determined by $H=\varepsilon(h)$ and $X_i=\varepsilon(c_i)$ for $i=1,2,3$. Let	 
$$\rho(h)=\begin{pmatrix} \lambda & 0 \\ 0 & \lambda^{-1} \end{pmatrix}, \ \textrm{and} \ \rho(c_i)=\begin{pmatrix}
\zeta_i & 0 \\ 0 & \zeta_i^{-1} \end{pmatrix}, \ \text{for}\ i=1,2,3,$$
where $\lambda \neq \pm 1,$ and furthermore
$$H=\begin{pmatrix} u & v \\ w & -u \end{pmatrix}, \ \textrm{and} \ X_i=
\begin{pmatrix} x_i & y_i \\ z_i & -x_i \end{pmatrix}.$$
Applying $\varepsilon$ to the equations $[c_i,h]=1$ we obtain \eqref{commute} which now reduces to
$$X_i-\mathrm{Ad}\, \rho(h)X_i=H-\mathrm{Ad}\, \rho(c_i)H.$$
Note that the above matrices have zeros on their diagonals. By comparing the off-diagonal entries, we obtain 
$$y_i=\frac{\zeta_i^2-1}{\lambda^2-1}v, \ z_i=\frac{\zeta_i^{-2}-1}{\lambda^{-2}-1}w.$$

Next applying $\varepsilon$ to the equations $c_i^{p_i}h^{q_i}=1$ we are led again to \eqref{powers1}, which for the diagonal entires reduce to 
\begin{equation} \label{e.xpq}
x_i=-\frac{q_i}{p_i}u.
\end{equation}
Finally, applying $\varepsilon$ to $c_1c_2c_3=1$ once again gives Equation \eqref{last} and 
looking at the diagonal entries, we get 
$x_1+x_2+x_3=0$,
which becomes
\begin{equation} \label{e.sumx}
-\left(\frac{q_1}{p_1}+\frac{q_2}{p_2}+\frac{q_3}{p_3}\right)u=-e(M) u=0.
\end{equation}
Since we assumed that $e(M)\neq 0$, the last equation implies that $u$ vanishes.
Therefore, $\varepsilon$ is entirely determined by $v,w,$ and $\dim Z^1(M,\mathrm{Ad}\, \rho)=2$, implying $\dim H^1(M,\mathrm{Ad}\, \rho)=0.$
\end{proof}

Finally, the last lemma of the subsection treats  the case  of diagonal characters with $\chi(h)=\pm 2$.

\begin{lemma}\label{lemma:excepCharacNonreduced} 
Let $\rho\in R(M)$ be diagonal with $\rho(h)=\pm I$.  Then $H^1(M,\mathrm{Ad}\, \rho)=0$, if $\rho$ is non-exceptional, and $\dim H^1(M,\mathrm{Ad}\, \rho)=2$ otherwise.
\end{lemma}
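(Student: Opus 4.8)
The plan is to follow the cocycle computations of Lemmas \ref{lemma:irredReduced} and \ref{lemma:abelianReduced1} almost verbatim, the key simplification being that $\rho(h)=\pm I$ makes $\mathrm{Ad}\,\rho(h)$ the identity on $sl_2(\C)$. First I would diagonalize, writing $\rho(c_i)=\mathrm{Diag}(\zeta_i,\zeta_i^{-1})$, and parametrize a cocycle $\varepsilon\in Z^1(M,\mathrm{Ad}\,\rho)$ by $H=\varepsilon(h)=\begin{pmatrix} u & v\\ w & -u\end{pmatrix}$ and $X_i=\varepsilon(c_i)=\begin{pmatrix} x_i & y_i\\ z_i & -x_i\end{pmatrix}$, since $h,c_1,c_2,c_3$ generate $\pi_1(M)$. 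I would also record the structural observation that $\rho$ is central iff all $\zeta_i=\pm1$, that a non-central $\rho$ has at most one $\zeta_i=\pm1$ (because $c_1c_2c_3=1$ forces the third $\rho(c_i)\in\{\pm I\}$ as soon as two of them are), and hence that a non-central non-exceptional $\rho$ has \emph{exactly} one $\zeta_i=\pm1$. The central case is already covered by Lemma \ref{lemma:centralReduced}, so I would reduce to $\rho$ non-central, where $\dim B^1(M,\mathrm{Ad}\,\rho)=2$ as in Lemma \ref{lemma:abelianReduced1}.

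Next I would extract the linear constraints from the three families of relations. Applying $\varepsilon$ to $[c_i,h]=1$ and using $\mathrm{Ad}\,\rho(h)=I$, Equation \eqref{commute} collapses to $(\mathrm{Ad}\,\rho(c_i)-I)H=0$, i.e. $(\zeta_i^2-1)v=(\zeta_i^{-2}-1)w=0$; since $\rho$ is non-central some $\zeta_i\neq\pm1$, forcing $v=w=0$. Applying $\varepsilon$ to $c_i^{p_i}h^{q_i}=1$, Equation \eqref{powers1} has right-hand side $-q_iH$ (the $h$-sum is $q_i H$ because $\mathrm{Ad}\,\rho(h)=I$, and $\mathrm{Ad}\,\rho(c_i^{p_i})=I$); its diagonal part reproduces \eqref{e.xpq}, $x_i=-\tfrac{q_i}{p_i}u$, while its off-diagonal part depends on $\zeta_i$: when $\zeta_i\neq\pm1$ the geometric sum $1+\zeta_i^2+\dots+\zeta_i^{2(p_i-1)}$ vanishes and the equation is vacuous, so $y_i,z_i$ are free, whereas when $\zeta_i=\pm1$ it forces $y_i=-\tfrac{q_i}{p_i}v=0$ and $z_i=-\tfrac{q_i}{p_i}w=0$. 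Finally, applying $\varepsilon$ to $c_1c_2c_3=1$ gives \eqref{last}; its diagonal part is \eqref{e.sumx}, $-e(M)u=0$, which forces $u=0$ (hence $H=0$ and all $x_i=0$), while its off-diagonal part yields one linear relation among $y_1,y_2,y_3$ and one among $z_1,z_2,z_3$.

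With these constraints in hand the dimension count is routine. In the exceptional case all $\zeta_i\neq\pm1$, so none of the $c_i^{p_i}h^{q_i}$ relations constrain the $y_i,z_i$, and $Z^1$ is cut out inside the six-dimensional space of $(y_1,y_2,y_3,z_1,z_2,z_3)$ by the two relations from \eqref{last}; thus $\dim Z^1=4$ and $\dim H^1=4-2=2$. In the non-central non-exceptional case exactly one $\zeta_i=\pm1$, say $\zeta_1$; then $y_1=z_1=0$, while the two relations from \eqref{last} express $y_2,z_2$ in terms of $y_3,z_3$, leaving $Z^1$ two-dimensional, so $\dim H^1=2-2=0$. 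Together with Lemma \ref{lemma:centralReduced} for the central case, this establishes the claim.

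I expect the main obstacle to be purely bookkeeping: tracking which off-diagonal constraints survive according to whether each $\zeta_i$ equals $\pm1$, correctly evaluating the sums $1+\zeta_i^2+\dots+\zeta_i^{2(p_i-1)}$ (which vanish exactly when $\zeta_i\neq\pm1$, since $\zeta_i^{2p_i}=1$), and handling the sign and indexing subtleties for $q_i<0$ in \eqref{e.cocycle-n}. The one genuinely structural point to get right is the observation that a non-central $\rho$ has at most one $\zeta_i=\pm1$, as this is precisely what separates the two regimes and pins down $\dim Z^1$.
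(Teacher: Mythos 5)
Your proposal is correct and follows essentially the same route as the paper's proof: reduce the central case to Lemma \ref{lemma:centralReduced}, derive $v=w=0$ from the commutation relations, get $x_i=-\tfrac{q_i}{p_i}u$ (and $y_i=z_i=0$ exactly when $\zeta_i=\pm1$) from the relations $c_i^{p_i}h^{q_i}=1$, then use $c_1c_2c_3=1$ to force $u=0$ and impose two independent off-diagonal relations, yielding $\dim Z^1=4$ (exceptional) or $2$ (non-exceptional) against $\dim B^1=2$. The only differences from the paper are organizational (you substitute $u=0$ before counting parameters, while the paper counts $7$ or $5$ cocycle parameters and then subtracts the $3$ relations), and your bookkeeping of which off-diagonal constraints survive matches the paper's.
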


\begin{proof}
If $\rho$ maps two of  $c_1, c_2, c_3$ to $\pm I$ then so it does the third one and $\rho$ is central. Hence, $\rho$ is  
non-exceptional and the statement follows from Lemma \ref{lemma:centralReduced}. Therefore, we assume without loss of generality, that either $\rho(c_1)=\pm I$ and $\rho(c_2)\neq \pm I$, or  that $\rho$ is exceptional. 

Similarly to the proof of Lemma \ref{lemma:abelianReduced1}, we have $\dim B^1(M,\mathrm{Ad}\, \rho)=2$ and we compute $\dim Z^1(M,\mathrm{Ad}\, \rho)$. 
As before, $\rho(c_i)=\mathrm{Diag}(\zeta_i,\zeta_i^{-1})$, for $i=1,2,3$. We also keep the notations for the entries of $H,X_1,X_2,X_3$.
As in the proof of Lemma \ref{lemma:irredReduced}, applying $\varepsilon$ to
relation $[c_2,h]=1$ gives
$\mathrm{Ad}\, \rho(c_2)(H)-H=0$.
Hence $H$ commutes with $\rho(c_2)$ and thus it is diagonal, i.e. $v=w=0$.
(Now the $1$-cocycle condition is satisfied for $[c_i,h]=1$, $i=1$ and $3$, as well.)

As in the proof of Lemma \ref{lemma:abelianReduced1}, the relations $c_i^{p_i}h^{q_i}=1$, by looking at diagonals imply Eq. \eqref{e.xpq} again:
$$x_i=-\frac{q_i}{p_i}u \ \text{for}\ i=1,2,3.$$

If $\rho(c_i)\ne \pm I$ then \eqref{powers1} is automatically satisfied on off-diagonal entries.
However, 
if $\rho(c_1)=\pm I$ then \eqref{powers1}, and the diagonality of $H$ implies $y_1=z_1=0$. 
Hence, the $1$-cocycle conditions for the relations $[c_i,h]=1$ and $c_i^{p_i}h^{q_i}=1$, define a $7$-dimensional space
of cocycles (determined by parameters: $u, y_i,z_i$ for $i=1,2,3$) for exceptional $\rho$'s and a  $5$-dimensional space
of cocycles (determined by $u, y_i,z_i$ for $i=2,3$) for non-exceptional $\rho$'s.

From the last relation $c_1c_2c_3=1$, we get Eq.  \eqref{e.sumx} again and, hence, $u=0$. Consequently $x_1,x_2,x_3$ vanish as well and by looking at the off-diagonal entires of \eqref{last}, we have
$$y_1+\zeta_1^2y_2+\zeta_1^2\zeta_2^2y_3=0 =  z_1+\zeta_1^2z_2+\zeta_1^2\zeta_2^2z_3.$$ 

If $\rho(c_1)=\pm I,$ then $X_1=0$ by the above discussion and the last equations reduce to
$$y_2+\zeta_2^2y_3=0 = z_2+\zeta_2^{-2}z_3.$$ 
In either case, the above parameters are related by three linearly independent equations stemming from of the relation $c_1c_2c_3=1$.
Now
$Z^1(M,\mathrm{Ad}\, \rho)$ is given by all linear maps $\varepsilon: \pi_1(M)\to sl_2(\C)$ satisfying the $1$-cocycle conditions corresponding to the defining relations of $\pi_1(M)$
(since the $1$-cocycle conditions corresponding to the products of the defining relations are linear combinations of those).
Consequently, $\dim Z^1(M,\mathrm{Ad}\, \rho)$ is either $4$ or $2$-dimensional, depending on whether $\rho$ is exceptional or not. Since $\dim B^1(M,\mathrm{Ad}\, \rho)=2$, the statement follows.
\end{proof}

%%%%%%%%%%%%%%%%%%%%%%%%%%%%%%%%%%%%%%%%%%%%%
%
\subsection{Bases for $\C[\cX(M)]$ and $\S(M,\Q(A))$} 
\label{ss.basis}
%%%%%%%%%%%%%%%%%%%%%%%%%%%%%%%%%%%%%%%%%%%%%
Let $S_{-1}(M):=S(M, \Z[A^{\pm 1}])\otimes_{\Z[A^{\pm 1}]} \C,$
where the $\Z[A^{\pm 1}]$-module structure of $\C$ is given by sending $A$ to $-1$.
By Przytycki-Sikora \cite{PS00}, $S_{-1}(M)$ has the structure of a $\C$-algebra that is isomorphic to the coordinate ring $\C[\cX(M)]$ of $\cX(M)$, through the isomorphism $\psi: S_{-1}(M)\to \C[\cX(M)]$, sending
any framed link $L=L_1 \cup \ldots \cup L_k$ in $M$ to $(-1)^{k} t_L$ where $t_L=t_{L_1}\cdot \ldots \cdot t_{L_k}$. 
For another approach, up to nilpotents, see \cite{Bullock}. Here, $t_{L_i}$ is the trace function of $L_i$ with its framing ignored.
As we explained  in \cite[Proposition 3.3]{DKS}, if $S(M,\Q[A^{\pm 1}])$ is tame and $\cX(M)$ is reduced, a basis of
 $S_{-1}(M)\simeq \C[\cX(M)]$ leads to a basis of $\S(M)=\S(M, \Q(A))$.

In this subsection we compute a basis of $ \C[\cX(M)]$ for $M=M(\frac{q_1}{p_1}, \frac{q_2}{p_2}, \frac{q_3}{p_3})$.
We will  assume that $p_1,p_2,p_3$ are weakly coprime, since then Proposition \ref{p.excepCharac} implies that there are no exceptional abelian characters and by Theorem \ref{thm:reduced}, the character scheme $\cX(M)$ is reduced.
We will write $y_M$ for the number of abelian characters of $M,$ which was computed in Lemma \ref{lemma:abelianChar}.
With  the notation, $p_i^+=\lceil \frac{p_i}{2}\rceil-1$ and $p_i^--=\lfloor \frac{p_i}{2} \rfloor,$ of Subsection \ref{ss.irred} we have:

 \begin{theorem}\label{t:basis} Assume that $p_1,p_2,p_3$ are weakly coprime, and let $\delta_M\in \lbrace 0,1 \rbrace$ denote the parity of $y_M$.
 Then the following  set is  a basis of $\C[\cX(M)]$ :
	\begin{multline*}\mathcal{B}=\lbrace (t_h+2)t_{c_1}^{k_1}t_{c_2}^{k_2}t_{c_3}^{k_3} \ | \ 0\leq k_i< p_i^+ \rbrace \cup \lbrace (t_h-2) t_{c_1}^{k_1}t_{c_2}^{k_2}t_{c_3}^{k_3} \ | \ 0\leq k_i< p_i^- \rbrace 
		\\ \cup \lbrace t_h^i \ | 2\leq i < y_M + \delta_M\rbrace\cup \mathcal{B}_0,
	\end{multline*}
where
$$\mathcal{B}_0=
\lbrace (t_h+2)t_{c_1}^{p_1^+}\rbrace, \ \text{for}\ y_M \ \text{odd} \ \text{and}\ 
\mathcal{B}_0= \lbrace (t_h+2)t_{c_1}^{p_1^+}, (t_h-2)t_{c_1}^{p_1^-}\rbrace,  \ \text{for}\ y_M \ \text{even}.
$$
Moreover, any collection of $\Z$-linear combinations of links in $M$ that represent those functions in $\C[\cX(M)]\simeq S_{-1}(M)$ is a basis of $S(M,\Q(A)).$
\end{theorem}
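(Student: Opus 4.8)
The plan is to establish the two assertions separately: first that $\mathcal{B}$ is a $\C$-basis of $\C[\cX(M)]\cong S_{-1}(M)$, and then to derive the statement about $S(M,\Q(A))$ as a formal corollary. Since $p_1,p_2,p_3$ are weakly coprime, Proposition \ref{p.excepCharac} gives $x_M=0$, so $\cX(M)$ is reduced by Theorem \ref{thm:reduced}; being reduced and finite it satisfies $\C[\cX(M)]\cong\C^{X(M)}$, the algebra of functions on the finite set $X(M)$, with $\dim_\C\C[\cX(M)]=|X(M)|=|X^{irr}(M)|+y_M$. Invoking Proposition \ref{prop:irredCharac} (with $x_M=0$) and Lemma \ref{lemma:abelianChar}, my first step is the bookkeeping that $|\mathcal{B}|=p_1^+p_2^+p_3^++p_1^-p_2^-p_3^-+y_M$, checking both parities of $y_M$, where the contribution of $\{t_h^i\}$ together with $\mathcal{B}_0$ always totals $y_M$. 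Given the equality of cardinalities, it then suffices to prove that the elements of $\mathcal{B}$ are linearly independent as functions on $X(M)$.

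For independence I would stratify $X(M)$ into the irreducible characters with $\rho(h)=I$ (type $+$, parametrized by $(\chi(c_1),\chi(c_2),\chi(c_3))\in C_{p_1}^+\times C_{p_2}^+\times C_{p_3}^+$), the irreducible characters with $\rho(h)=-I$ (type $-$, parametrized in $C_{p_1}^{\epsilon_1}\times C_{p_2}^{\epsilon_2}\times C_{p_3}^{\epsilon_3}$), and the $y_M$ abelian characters. Two elementary observations drive the argument: $(t_h-2)$ vanishes on type $+$ and $(t_h+2)$ vanishes on type $-$; and on each irreducible stratum the monomials $t_{c_1}^{k_1}t_{c_2}^{k_2}t_{c_3}^{k_3}$ evaluated over the product grid form a tensor product of Vandermonde matrices in the distinct elements of $C_{p_i}^{\pm}$, hence a nonsingular block. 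Ordering the characters as type $+$, type $-$, abelian, and the functions as the $(t_h+2)$-family, the $(t_h-2)$-family, then $\{t_h^i\}\cup\mathcal{B}_0$, the evaluation matrix acquires the two Vandermonde blocks on its diagonal; the plan is to eliminate these blocks and reduce the whole question to the invertibility of the resulting Schur complement, an effective $y_M\times y_M$ matrix supported on the abelian characters.

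The main obstacle is precisely this abelian block: the functions $(t_h\pm2)\,t_{c_1}^{k_1}t_{c_2}^{k_2}t_{c_3}^{k_3}$ do not vanish on abelian characters, so the matrix is not block diagonal and the Schur complement genuinely mixes all three function families. To control it I would use that the abelian characters are parametrized by $\mathrm{Hom}(\pi_1(M),\C^*)\cong H_1(M,\Z)$ modulo the involution $\lambda\mapsto\lambda^{-1}$, that $(t_h+2),(t_h-2)$ and the powers $t_h^i$ jointly span $1,t_h,\dots,t_h^{y_M+\delta_M-1}$, and that the residual combinations of $t_h$ and the $t_{c_i}$ distinguish the $y_M$ abelian characters (recalling that $t_{c_3}=t_{c_1c_2}$ records the product $\lambda(c_1)\lambda(c_2)$). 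The parity of $y_M$ enters through the fixed points of the involution, i.e.\ the $2$-torsion recorded by $H_1(M,\Z/2\Z)$, which is exactly why $\mathcal{B}_0$ and the cutoff $y_M+\delta_M$ are defined by cases; showing that this calibration makes the Schur complement nonsingular is the technical heart of the argument.

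Finally, the ``Moreover'' statement is a formal consequence. As $M$ is a non-Haken Seifert fibered manifold with $e(M)\neq0$, Theorem \ref{main-i} shows that $S(M,\Q[A^{\pm1}])$ is tame, and we have already noted that $\cX(M)$ is reduced. Under the Przytycki--Sikora isomorphism $\psi\colon S_{-1}(M)\to\C[\cX(M)]$ of \cite{PS00}, the basis $\mathcal{B}$ corresponds to a basis of $S_{-1}(M)=S(M,\Z[A^{\pm1}])\otimes_{\Z[A^{\pm1}]}\C$. A collection of $\Z$-linear combinations of links representing these functions is exactly a choice of lifts in $S(M,\Z[A^{\pm1}])$ of this basis, so \cite[Proposition 3.3]{DKS}, whose hypotheses we have just verified, guarantees that any such lift is a basis of $S(M,\Q(A))$.
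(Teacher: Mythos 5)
Your cardinality count, your stratification of $X(M)$, the observation that $t_h\mp 2$ kills the two irreducible strata, the grid/Vandermonde argument on each irreducible stratum (this is exactly the role of Lemma \ref{lemma:polynomial} in the paper), and your treatment of the ``Moreover'' clause via \cite[Proposition 3.3]{DKS} all match the paper. But there is a genuine gap precisely where you locate the ``technical heart'': you never prove that the $y_M\times y_M$ Schur complement supported on the abelian characters is nonsingular; you only assert that the definitions of $\mathcal{B}_0$ and of the cutoff $y_M+\delta_M$ ought to make it so. Since the $(t_h\pm2)$-families do not vanish on abelian characters, that block genuinely mixes all three families, and nothing in your sketch controls it; recasting invertibility of the full evaluation matrix as invertibility of a Schur complement is a restatement of the problem, not a solution.

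The paper eliminates this mixing with two ideas absent from your proposal. First, it replaces the functions $t_h^i$, $2\le i<y_M+\delta_M$, by $(t_h^2-4)t_h^{i-2}$, which span the same space modulo the other two families but vanish on both loci $\{t_h=2\}$ and $\{t_h=-2\}$; the elimination then becomes triangular rather than requiring a Schur complement: restricting a vanishing combination $F$ to characters with $\chi(h)=2$ isolates the $(t_h+2)$-family, restricting to $\chi(h)=-2$ isolates the $(t_h-2)$-family, and what survives is a polynomial in $t_h$ alone. Second, and this is the step your sketch cannot reach, Lemma \ref{lemma:generatorH_1} shows that weak coprimality forces $h$ to generate $H_1(M,\Z)$, so $t_h$ by itself separates the $y_M$ abelian characters; in particular the only abelian character with $t_h=2$ is the trivial one, which serves as the single extra evaluation point killing the coefficient of $(t_h+2)t_{c_1}^{p_1^+}$ (and the unique abelian character with $t_h=-2$, when it exists, plays the same role for $(t_h-2)t_{c_1}^{p_1^-}$), while the remaining $y_M-2+\delta_M$ abelian characters with $t_h\ne\pm2$ kill the $(t_h^2-4)t_h^j$ coefficients by a one-variable root count. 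Your suggestion that ``residual combinations of $t_h$ and the $t_{c_i}$'' distinguish the abelian characters points in the wrong direction: without the generation statement of Lemma \ref{lemma:generatorH_1} the calibration you need cannot be verified, and with it the Schur-complement machinery becomes unnecessary.
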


\begin{remark} \label{rk:betterbasis} If we assume furthermore that $p_1,p_2,p_3$ are odd, then $p_i^+=p_i^-$, for $i=1,2,3$, and for $y_M$ even one can replace the previous basis by
	$$\mathcal{B}'=\lbrace t_h^i t_{c_1}^{k_1}t_{c_2}^{k_2}t_{c_3}^{k_3} \ | \ 0\leq k_1\leq p_1^+, 0\leq k_2<p_2^+, 0 \leq k_3<p_3^+, 0\leq i\leq 1 \rbrace \cup \lbrace t_h^i \ | 2\leq i < y_M \rbrace,$$
as it can be easily seen that they span the same space. The latter basis consists only of trace functions of links, rather than linear combinations of such functions.
\end{remark}

For the proof of \ref{t:basis} we need the  two preparatory lemmas below:

\begin{lemma}\label{lemma:generatorH_1} If $p_1,p_2,p_3$ are weakly coprime, then $h$ generates $H_1(M,\Z).$ 
\end{lemma}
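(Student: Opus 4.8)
The plan is to pass to homology and argue directly that $h$ generates, i.e.\ that the quotient $Q:=H_1(M,\Z)/\langle h\rangle$ is trivial. Abelianizing the presentation \eqref{eq:pres} exactly as in Lemma~\ref{lemma:abelianChar}(b) and then setting $h=0$, one finds that
$$Q=\langle c_1,c_2,c_3 \mid p_ic_i=0\ (i=1,2,3),\ c_1+c_2+c_3=0\rangle;$$
in other words $Q$ is the first homology of the base orbifold $S^2(p_1,p_2,p_3)$, and the Euler data $q_1,q_2,q_3$ drop out once $h$ is killed. Thus the entire statement reduces to the purely arithmetic claim that this finite abelian group vanishes, and the role of the hypothesis is precisely to force that vanishing.

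By weak coprimality I may relabel so that $p_1$ is coprime to both $p_2$ and $p_3$, hence to $p_2p_3$. The first step is to eliminate $c_1$ from $Q$. On one hand $p_1c_1=0$. On the other hand, writing $c_3=-(c_1+c_2)$ turns the relation $p_3c_3=0$ into $p_3c_1+p_3c_2=0$; multiplying this by $p_2$ and using $p_2c_2=0$ yields $p_2p_3c_1=0$. Since $\gcd(p_1,p_2p_3)=1$, a B\'ezout combination of the two identities $p_1c_1=0$ and $p_2p_3c_1=0$ forces $c_1=0$ in $Q$. This is exactly the step that exploits weak coprimality, and it is clean and self-contained.

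With $c_1=0$ the surviving relations reduce to $p_2c_2=0$ and $p_3c_2=0$ (the latter coming from $c_3=-c_2$ and $p_3c_3=0$), so $Q$ collapses to the cyclic group generated by $c_2$ and annihilated by both $p_2$ and $p_3$. The crux of the argument — and the step I expect to be the main obstacle — is to conclude that this residual cyclic quotient is trivial, i.e.\ that $c_2$ (and hence $c_3$) also lies in $\langle h\rangle$; this is where the coprimality among the multiplicities has to be pushed as far as it will go. I would close the proof by feeding the coprimality hypothesis back in to kill the remaining annihilator of $c_2$, after which $Q=0$ and $h$ generates $H_1(M,\Z)$, completing the lemma.
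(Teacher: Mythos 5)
Your reduction is correct as far as it goes, and in substance it matches the paper's computation: setting $h=0$ in the abelianized presentation \eqref{eq:pres} does give $Q=\langle c_1,c_2,c_3 \mid p_ic_i=0,\ c_1+c_2+c_3=0\rangle$, your B\'ezout step killing $c_1$ is valid (from $p_3c_1+p_3c_2=0$ and $p_2c_2=0$ you get $p_2p_3c_1=0$, and $\gcd(p_1,p_2p_3)=1$ then forces $c_1=0$), and what survives is exactly the cyclic group generated by $c_2$ with annihilator generated by $p_2$ and $p_3$, i.e. $Q\cong\Z/\gcd(p_2,p_3)$. The paper instead computes $|Q|$ as a gcd of $3\times 3$ minors of the presentation matrix (the matrix $P$ from the proof of Proposition \ref{prop:nbExcepCharac} with the entry $2$ replaced by $1$), obtaining $|Q|=\gcd(p_1p_2q_3+p_1p_3q_2+p_2p_3q_1,\,p_1p_2,\,p_1p_3,\,p_2p_3)$; since the first entry is a $\Z$-combination of the other three, this equals $\gcd(p_1p_2,p_1p_3,p_2p_3)$, which under $\gcd(p_1,p_2p_3)=1$ is the same quantity $\gcd(p_2,p_3)$ you arrived at.

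However, the step you defer (``feed the coprimality hypothesis back in to kill the remaining annihilator of $c_2$'') is a genuine gap, and it cannot be filled as promised: the weak coprimality hypothesis is exhausted once $p_1$ is declared coprime to $p_2p_3$, and it imposes no constraint at all on $\gcd(p_2,p_3)$. Your own computation shows $Q=0$ if and only if $\gcd(p_2,p_3)=1$, i.e.\ if and only if the triple is pairwise coprime, and weak coprimality does not allow relabelling to achieve this: for $(p_1,p_2,p_3)=(3,4,4)$ the only admissible distinguished index is $1$, yet $\gcd(p_2,p_3)=4$; taking $q_1=q_2=q_3=1$ one checks directly that $H_1(M,\Z)\cong\Z/40$ and $\langle h\rangle$ has index $4$, so $Q\cong\Z/4\neq 0$. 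It is worth noting that the paper's own proof stumbles at the identical spot: it asserts $\gcd(p_1p_2,p_1p_3)=p_1$, whereas in fact $\gcd(p_1p_2,p_1p_3)=p_1\gcd(p_2,p_3)$, so the published argument tacitly assumes $\gcd(p_2,p_3)=1$ exactly where your proposal stalls. In short, the ``main obstacle'' you flagged is not a defect of your route but the actual crux: both your argument and the paper's close only under the stronger hypothesis that $p_1,p_2,p_3$ are pairwise coprime (equivalently, after your reduction, that the two non-distinguished multiplicities are coprime), and with that hypothesis your proof is complete and, if anything, more transparent than the minor computation in the paper.
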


\begin{proof}
It is enough to show that $H_1(M,\Z)/\langle h \rangle =0$. Using the ordered set of generators
$\{h, c_1, c_2\}$, this group has presentation matrix $Q$ that is identical to the matrix $P$ in the proof of 
Lemma \ref{prop:nbExcepCharac} except that the first entry of the 4th row is 1 instead of 2.
We get $$|H_1(M,\Z)/\langle h \rangle |=gcdm(Q)=\gcd(p_1p_2q_3+p_1p_3q_2+p_2p_3q_1,p_1p_2,p_1p_3,p_2p_3).$$
Without loss of generality assume that $p_1$ is coprime with $p_2$ and with $p_3.$ Then $gcd(p_1p_2,p_1p_3)=p_1$ and, hence, $gcd(p_1p_2,p_1p_3,p_2p_3)=1$ implying that $H_1(M,\Z)/\langle h\rangle$ is trivial.
\end{proof}

The second lemma we need is the following:

\begin{lemma}
	\label{lemma:polynomial}Let $\K$ be a field of characteristic zero and let $P\in \K[X_1,\ldots,X_n]$ be a polynomial of degree $\leq d_i$ in the variable $X_i,$ for $i=1,...,n$. If $P$ vanishes on a subset of $\K^n$ of the form $S_1\times \ldots \times S_n,$ where $|S_i|=d_i+1,$ then $P=0.$
\end{lemma}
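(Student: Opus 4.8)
The plan is to prove this by induction on the number of variables $n$, reducing at each stage to the one-variable case, where the statement is the familiar fact that a nonzero univariate polynomial of degree $\le d$ over a field has at most $d$ roots.

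For the base case $n=1$, I would simply invoke that a polynomial $P\in \K[X_1]$ of degree $\le d_1$ vanishing at the $d_1+1$ distinct points of $S_1$ must be identically zero, since otherwise a nonzero degree-$\le d_1$ polynomial would have more than $d_1$ roots.

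For the inductive step, assuming the result for $n-1$ variables, I would regard $P$ as a polynomial in $X_n$ with coefficients in $\K[X_1,\ldots,X_{n-1}]$, writing
$$P=\sum_{j=0}^{d_n} Q_j(X_1,\ldots,X_{n-1})\,X_n^{\,j},$$
where each $Q_j$ has degree $\le d_i$ in $X_i$ for $i\le n-1$. Fixing an arbitrary point $(a_1,\ldots,a_{n-1})\in S_1\times\cdots\times S_{n-1}$, the univariate specialization $X_n\mapsto P(a_1,\ldots,a_{n-1},X_n)$ then has degree $\le d_n$ and vanishes on all $d_n+1$ elements of $S_n$; by the base case it is the zero polynomial, which forces $Q_j(a_1,\ldots,a_{n-1})=0$ for every $j$. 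As the point was arbitrary, each $Q_j$ vanishes on the entire grid $S_1\times\cdots\times S_{n-1}$, so the inductive hypothesis yields $Q_j=0$ for all $j$, and hence $P=0$.

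I do not anticipate a genuine obstacle here: the argument is a routine grid-vanishing (Combinatorial Nullstellensatz-type) induction. The only points requiring minor care are the bookkeeping of the degree bounds --- checking that each coefficient $Q_j$ inherits the bound $\deg_{X_i} Q_j\le d_i$ for $i<n$, so that the inductive hypothesis applies verbatim --- and observing that the characteristic-zero hypothesis is not actually used, the statement holding over an arbitrary field.
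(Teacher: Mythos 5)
Your proof is correct, and it is the same basic strategy as the paper's: induction on the number of variables, powered by the fact that a univariate polynomial over a field with more roots than its degree is zero. The two arguments differ in how the induction step is organized, essentially running the reduction in the opposite order. The paper specializes the \emph{last} variable: for each $z\in S_{n+1}$ the polynomial $P(X_1,\ldots,X_n,z)$ vanishes on $S_1\times\cdots\times S_n$, so the inductive hypothesis kills each such specialization; then $P$, viewed as a univariate polynomial in $X_{n+1}$ over the field $\K(X_1,\ldots,X_n)$, has $d_{n+1}+1$ roots but degree at most $d_{n+1}$, hence is zero. You instead specialize the \emph{first} $n-1$ variables at each grid point, apply the univariate base case to kill those specializations, extract that every coefficient $Q_j$ of $X_n^j$ vanishes on $S_1\times\cdots\times S_{n-1}$, and only then invoke the inductive hypothesis on the $Q_j$. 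The net effect is the same; your version has the small advantage of never leaving polynomial rings (no passage to the rational function field), at the cost of the coefficient bookkeeping you correctly flag (each $Q_j$ inherits the degree bounds $\deg_{X_i}Q_j\le d_i$). Your closing observation is also accurate and applies equally to the paper's proof: the characteristic-zero hypothesis is never used, and the lemma holds over any field.
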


\begin{proof}

	We prove the lemma by induction on $n.$ The case $n=1$ is classical. Assume the lemma is true for polynomials in $n$ variables, and let $P\in \K[X_1,\ldots,X_{n+1}]$ satisfy the hypothesis of the lemma. For each $z\in S_{n+1},$ the polynomial $P(X_1,\ldots,X_n,z)$ has degree $\leq d_i$ in each variable $X_i,$ hence it is the zero polynomial. This implies that
$P$ considered in $\K(X_1,...,X_n)[X_{n+1}]$ has at least $d_{n+1}$ roots, implying that $P=0$ by the classical case. 	
\end{proof}

\vskip 0.02in
\noindent { {\bf{Proof of Theorem \ref{t:basis}. }}
The last claim of the theorem follows from the first part by \cite[Proposition 3.3(b)]{DKS}. Let us prove the first part.
	
First we note that $\mathcal{B}$ has the right cardinality by Proposition \ref{prop:irredCharac}. So we only need to show that those trace functions are linearly independent. Since $t_h-2,t_h+2$ belong to first and second subset respectively, one can equivalently replace the third subset by 
$$\lbrace (t_h^2-4)t_h^i \ | \ 0\leq i <y_M-2+\delta_M \rbrace,$$
without affecting the linear independence. We work with the latter version of the third subset.
	
Consider a linear combination $F$
\begin{multline*} F= \sum_{k_1,k_2,k_3} \lambda_{k_1,k_2,k_3} (t_h+2)t_{c_1}^{k_1}t_{c_2}^{k_2}t_{c_3}^{k_3} +
\sum_{k_1,k_2,k_3}  \mu_{k_1,k_2,k_3} (t_h-2)t_{c_1}^{k_1}t_{c_2}^{k_2}t_{c_3}^{k_3}\\
+\sum_j \nu_j (t_h^2-4)t_h^j+a(t_h+2)c_1^{p_1^+}+ b(t_h-2)c_1^{p_1^-},
\end{multline*}
for some coefficients, $\lambda_{k_1,k_2,k_3}, \mu_{k_1,k_2,k_3}, \nu_j,a,b \in \C,$ {(with $b=0$ if $y_M$ is odd)
and assume that it is zero in $\C[\cX(M)]$, i.e. it vanishes on $X(M)$.

Restricting $F$ to the subspace 
$$X_{2,\tau}(M)=\lbrace \chi \in X(M) \ | \ \chi(h)=2, \chi(c_1)=\tau\rbrace$$ of 
$X(M)$
we get:
\begin{equation}\label{e.X_+}
a \tau^{p_1^+}+ \underset{0\leq k_i< p_i^+}{\sum} \lambda_{k_1,k_2,k_3}\tau^{k_1}t_{c_2}^{k_2}t_{c_3}^{k_3}=0 \ \mathrm{on } \  X_{+,\tau}(M),
\end{equation}
since the other components of $F$ vanish on $X_{2,\tau}(M)$. 
Now, recall from the proof of Proposition \ref{prop:irredCharac} that $X_{2}(M)$ 
contains characters taking all possible values $(\chi(c_1),  \chi(c_2),  \chi(c_3)\in C_{p_1}^+\times  C_{p_2}^+\times C_{p_3}^+$.
In other words, characters in $X_{+,\tau}(M)$ take all possible values $(\chi(c_2),  \chi(c_3))$ in $C_{p_2}^+\times C_{p_3}^+$ for every $\tau\in C_{p_1}^+.$
Since for each $\tau\in C_{p_1}^+,$ \eqref{e.X_+} is a polynomial of degree $<p_i^+=|C_{p_i}^+|$, for $i=2,3,$ which vanishes on
$C_{p_2}^+\times C_{p_3}^+$, 
$$a \tau^{p_1^+}+ \underset{0\leq k_1< p_1^+}\sum \lambda_{k_1,k_2,k_3}\tau^{k_1}t_{c_2}^{k_2}t_{c_3}^{k_3}=0 \ \mathrm{on } \  X_{+,\tau}(M),$$
for every $\tau\in C_{p_1}^+,k_2,k_3.$ However, since the above identity also holds for the trivial character and since the above expression is a polynomial in $\tau$ of degree $p_1^+< | C_{p_1}^+|+1=| C_{p_1}^+\cup \{2\}|,$ 
we have $a=\lambda_{k_1,k_2,k_3}=0$ for all $k_1,k_2,k_3$.

 Similarly, if $y_M$ is odd, restricting $F$ to $X_{-2,\tau}(M)=\lbrace \chi \in X(M) \ | \ \chi(h)=-2, \chi(c_1)=\tau \rbrace$ we get $\mu_{k_1,k_2,k_3}=0$ for any $0\leq k_i <p_i^-$, and if $y_M$ is even, we also get that $b=0$, since in the previous argument we can use instead of the trivial character the abelian character such that $\chi(h)=-2$. In particular, $F$ vanishes for $y_M=1.$  
 
 For $y_M>1,$
	$$F=\underset{0\leq j <y_M-2+\delta_M}{\sum} \nu_j (t_h^2-4)t_h^j.$$\
Since $F$ vanishes for $y_M=2$ as well, assume $y_M>2$ now.
Then $G:=\frac{F}{t_h^2-4}$ is a polynomial in $t_h$ of degree $y_M-3+\delta_M$ that vanishes on all of the abelian characters of $H_1(M)$ for which we have $\chi(h)\neq \pm 2$. 
By Lemma \ref{lemma:generatorH_1},  $H_1(M)$ is generated by $h$ and, hence, $t_h$ takes $y_M$ distinct values on abelian characters, including $y_M-2+\delta_M$ values which are not $\pm 2$. This implies that all coefficients of $G$ vanish, i.e. $\nu_j=0$ for every $j.$

\qed

%%%%%%%%%%%%%%%%%%%%%%%%%%%%%%%%%%%%%%%%%%%%%
\bibliographystyle{hamsalpha}
\bibliography{biblio}
\end{document}